\newcommand\xxoverset[3]{%
  \resizebox{#1+\widthof{\scriptsize #2}}{\height}{$#3$}}
  \newcommand\extoverset[3][0pt]{%
  \mathrel{\overset{\textup{#2}}{\xxoverset{#1}{#2}{#3}}}}
\newcommand{\N}{\mathbb{N}}
\newcommand{\R}{\mathbb{R}}
\newcommand{\Q}{\mathbb{Q}}
\theoremstyle{definition}
\newtheorem{thm}{Theorem}[section]
\newtheorem*{thm*}{Theorem}
\newtheorem{defn}[thm]{Definition}
\newtheorem{prop}[thm]{Proposition}
\newtheorem{cor}[thm]{Corollary}
\newtheorem{lem}[thm]{Lemma}
\newtheorem{remark}[thm]{Remark}
\newtheorem{ex}[thm]{Example}
\newtheorem{cl}[thm]{Claim}
\title{Cylinder decompositions on geometric armadillo tails}
\author{Dami Lee}
\address{Department of Mathematics, Oklahoma State University, Stillwater, OK 74078}
\email{dami.lee@okstate.edu}
\author{Josh Southerland}
\address{Department of Mathematics, University of Bristol, Bristol BS8 1UG}
\email{josh.southerland@bristol.ac.uk}
\date{}
\begin{document}

\begin{abstract}
We study a class of finite-area, infinite-type translation surfaces, and find an explicit cylinder decomposition on these surfaces which do not manifest on finite-type translation surfaces. Each cylinder decomposition contains a special curve which we show is an obstruction to the existence of certain affine diffeomorphisms. 
\end{abstract}

\setstretch{1.25}

%\subjclass[2020]{37C86, 57M50, 57K20}

\maketitle

%\tableofcontents

\section{Introduction and Definitions}

A translation surface is a (countable) collection of polygons in the plane where all edges are paired with an edge of equal length by translation such that inward pointing normal vectors on each edge point in opposite directions after identification. Finite-type translation surfaces are collections of finitely many polygons with only finitely many edges, whereas infinite-type translation surfaces allow for constructions containing countably many polygons allowing infinitely many edges. See, for example,~\cite{DeLaCroix-Hubert-Valdez}.

In this article, we define a particular infinite-type translation surface to study, which we call an \emph{armadillo tail surface}, or \emph{armadillo tail}. For computational convenience, we place a square, which we denote by $\square_1$, so that the lower left vertex lies at the origin and all edges are parallel to the axes. For $k \geq 1,$ glue the left side of $\square_{k+1}$ to the right side of $\square_k$ so that the bottom edge of all squares lie on the $x$-axis. We denote the side length of $\square_k$ by $l_k,$ and assume that $(l_k)$ is a strictly decreasing sequence. We then identify horizontal (vertical, resp.) edges via vertical (horizontal, resp.) translation. Bowman~\cite{bowman} and Degli Esposti--Del Magno--Lenci~\cite{ddl1},~\cite{ddl2} have also built infinite-type translation surfaces in a similar fashion but allowed rectangles instead of squares; the surface in the former article is known as a ``stack of boxes'' and the ones in the latter, ``infinite step billiards'' and ``Italian billiards.'' We remark that to make the armadillo tail a surface, we must remove the infinite degree singularity that appears in this construction. This is discussed further below.

The following are examples of finite-area armadillo tail surfaces.
\begin{ex} \begin{enumerate}
\item The armadillo tail surface where $l_k = r^{k-1},$ for $r \in (0,1),$ which we call a \emph{geometric armadillo tail surface} with parameter $r.$ It is bounded and has finite area, $\frac{1}{1-r^2}.$
    
\item The \emph{harmonic armadillo tail surface} where $l_k = \frac{1}{k}$ (Figure~\ref{fig:harmonic}). While the surface is not bounded in the horizontal direction, its area is $\zeta(2) = \frac{\pi^2}{6},$ finite. 
\end{enumerate}

\begin{figure}[htbp]
	\centering
	\includegraphics[width=0.7\linewidth]{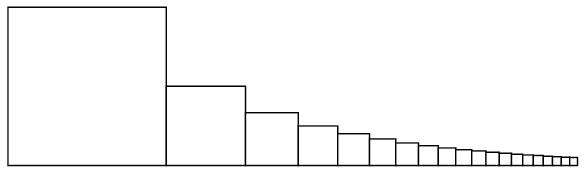}
	
	\caption{The harmonic armadillo tail surface}
 \label{fig:harmonic}
 \end{figure}
    
\end{ex}

We attain a finite translation surface $\bigcup\limits_{k=1}^n \square_k$ which we call the \emph{truncated armadillo tail} where we make the same identifications as above, but now we identify the right edge of $\square_n$ with the bottom segment of the left edge of $\square_1.$ We denote the truncated armadillo tail by $X_n$.

Without loss of generality, we assume that $l_1 = 1.$ With 
horizontal (vertical, resp.) edges being identified via vertical (horizontal, resp.) translation, the resulting translation surface is an infinite genus surface (infinite connected sum of square tori) with one (wild) infinite degree singularity. For background on wild singularities, see~\cite{bowman},\cite{DeLaCroix-Hubert-Valdez},~\cite{randecker2016}. The wild singularity appears infinitely many times in the polygonal representation: each vertex of the infinite-sided polygon is the same point. Throughout the paper, we will use \emph{armadillo tail surface} to mean the construction without the wild singularity. However, when we are not using surface theory, we will often take the metric completion of the armadillo tail surface and refer to this as the \emph{armadillo tail}.

Armadillo tail surfaces are concrete, toy examples that we use for prodding at both the geometric and dynamical properties of finite-area, infinite-type translation surfaces with one wild singularity (and no other singularities). In what follows, our focus is on a purely geometric construction: a cylinder decomposition on this surface. The cylinder decomposition is intriguing - we will see that it offers a dynamical interpretation to certain special \emph{cross cuts}~\cite{Trevino} on the surface, curves which leave every compact set in both directions. 

For infinite-type translation surfaces, there is no consensus on how to define a cylinder. See Remark \ref{rem:cyl} below. We opt for the following definition:  a \emph{cylinder} is a closed subspace of the surface whose interior is foliated by homotopic closed straight-line trajectories, and whose boundary consists of \emph{saddle connections}, line segments whose endpoints (or limits of endpoints) coincide with a singularity, and contains no singularities in the interior of the line segment. Note that saddle connections on the armadillo tail \emph{surface} will be open line segments since the only singularity is a wild singularity. In fact, these are cross cuts on the armadillo tail surface, but hereon we will use the term saddle connection. A closed geodesic in the interior of a cylinder is called a \emph{waist curve}. The \emph{circumference} of a cylinder is the length of a closed straight-line trajectory, and the \emph{width} of a cylinder is the distance between the bounding saddle connections. We define the \emph{modulus} of a cylinder as the ratio $\frac{\text{circumference}}{\text{width}}.$ A \emph{cylinder decomposition} $\mathcal{C}$ is the closure of a union of possibly infinitely many cylinders whose waist curves are in the same direction and which covers the surface. Further, we require that each cylinder in the cylinder decomposition only intersect another cylinder at most along its boundary. The closure of the union of cylinders may contain a line segment that is not in any cylinder which we call a \emph{spine}. If a spine is made of a single saddle connection, we call it a \emph{rigid spine.} If a spine is comprised of multiple (possibly infinitely many) saddle connections, we call it a \emph{flexible spine}. If a cylinder decomposition has no spine, we say it is a \emph{spineless cylinder decomposition}. Observe that the notion of a cylinder decomposition can be extended to the armadillo tail (with the singularity included) by allowing for the singularity to appear in the saddle connections defined above. More concretely, we will use the term saddle connection on an armadillo tail to mean a closed line segment starting and ending at the wild singularity. 

\begin{remark}\label{rem:cyl} Cylinders can have higher genus on the metric completion. Because of this, one may wish to define \emph{open cylinders} where we do not include the boundary saddle connections. However, our definition allows us distinguish the spine from other saddle connections in a cylinder decomposition. For our result, it is crucial that our cylinders are closed in the topology of the surface. 
\end{remark}

In general, finding cylinder decompositions is challenging, even on finite-type translation surfaces. However, given our choice of polygonal representation of the armadillo tail surface there is a simple example. Consider the cylinder decomposition $\mathcal{C}$ of an armadillo tail surface in the horizontal direction. See Figure \ref{fig:horizontal}. 

\begin{figure}[h]
	\centering
	\includegraphics[width=0.7\linewidth]{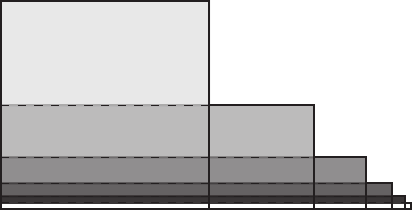}
	\caption{A cylinder decomposition of an armadillo tail surface in the horizontal direction}
 \label{fig:horizontal}
\end{figure}

Label the cylinders numerically from top to bottom $\mathrm{cyl}_k,$ and observe that each cylinder in the cylinder decomposition becomes longer and thinner, eventually limiting to a concatenation of infinitely many saddle connections at the base of the polygonal representation. This may appear to be a flexible spine, but each of these saddle connections is a boundary component of some cylinder, namely the ``top" of a cylinder, since the bottom saddle connections are identified to the saddle connections appearing at the top of each square. Indeed, this is a spineless cylinder decomposition. 

There is another cylinder decomposition intimately related to the above cylinder decomposition, a cylinder decomposition in the vertical direction, which we call $\mathcal{C}^\perp.$ See Figure~\ref{fig:vertical}. We call the left most cylinder $\mathrm{cyl}_1.$ Observe that the width of $\mathrm{cyl}_1^\perp,$ the first cylinder in $\mathcal{C}^\perp,$ is equal to the circumference of $\mathrm{cyl}_1$ in $\mathcal{C}.$ The circumference of $\mathrm{cyl}_1^\perp$ is equal to the sum of all of the widths of the cylinders in the original cylinder decomposition. Similarly, the width of $\mathrm{cyl}_2^\perp$ is equal to the height of $\mathrm{cyl}_2$ less the height of $\mathrm{cyl}_1.$ The circumference of $\mathrm{cyl}_2^\perp$ is the sum of all of the widths of the cylinders in $\mathcal{C}$ less the width of $\mathrm{cyl}_1.$ Each subsequent cylinder satisfies a similar property.

\begin{figure}[h]
	\centering
	\includegraphics[width=0.7\linewidth]{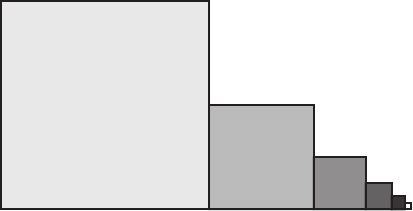}
	\caption{A cylinder decomposition of an armadillo tail surface in the vertical direction}
 \label{fig:vertical}
\end{figure}

\begin{remark}\label{rem:1011} 
Observe that the modulus of each cylinder in the vertical cylinder decomposition is 1. This implies that there exists an affine diffeomorphism of the surface $\phi$ such that $D\phi = \begin{bmatrix} 1 & 0 \\ 1 & 1 \end{bmatrix}$, where $D\phi$ is an element in the Veech group. See Appendix D in~\cite{Hooper-invariant-measures} for additional information. Observe that $D\phi$ is a parabolic element in $SL_2(\R)$, where the eigendirection of $D\phi$ corresponds to the direction of the cylinder decomposition. We will refer to an affine diffeomorphism whose derivative is a (non-identity) parabolic element as a \emph{parabolic affine diffeomorphism}. For a finite-type translation surface, a parabolic affine diffeomorphism is an example of a reducible element in the mapping class group of the underlying topological surface. 
\end{remark}

\subsection{Main results}

There is another less obvious cylinder decomposition on the surface which does not appear in the orbit of this horizontal cylinder decomposition (orbit of the group of affine diffeomorphisms of the surface). In the same way that the above (horizontal) cylinder decomposition is comprised of an infinite number of cylinders limiting to the base of the polygonal representation, this cylinder decomposition also contains an infinite number of cylinders becoming thinner and thinner. However, the distinction is that this is not a spineless cylinder decomposition - there is a rigid spine. Our main theorem is the following.

\begin{thm}\label{thm:one} There exists a cylinder decomposition with a rigid spine on any geometric armadillo tail surface of parameter $\frac{1}{q}$, $q \in \N\setminus \{1\}$. Moreover, there is no parabolic affine diffeomorphism of the surface that fixes this cylinder decomposition. 
\end{thm}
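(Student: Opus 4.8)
The plan is to write down the cylinders explicitly in a carefully chosen direction $\theta_q$ and then check, by inspecting the polygonal model, that their closures cover $X$ with exactly one leftover saddle connection. I expect the right direction to be that of a short saddle connection running from the lower-left corner of $\square_1$ across $\square_1$ and $\square_2$ to the far corner of $\square_2$; with $r=\tfrac1q$ this has slope $\tfrac{1}{q+1}$ in our coordinates. Since $l_1=1$, the top of $\square_1$ is glued to its bottom and the upper part of the right side of $\square_1$ is glued to the upper part of its left side, so $\square_1$ already carries enough internal identifications that one can follow the $\theta_q$-geodesics through it directly. A short computation---this is the first place the integrality of $q$ is used---should show that the geodesics meeting the left side of $\square_1$ in a suitable subinterval close up after finitely many crossings and sweep out a first cylinder $\mathrm{cyl}_1'$, whose two boundary components are finite concatenations of saddle connections in direction $\theta_q$.

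Next, using how $\square_{k+1}$ is attached along the bottom portion of the right side of $\square_k$, together with the gluings of the ``steps'' (the exposed upper portions of the right sides of the $\square_k$) back to the left side of $\square_1$, I would show inductively that after deleting $\mathrm{cyl}_1',\dots,\mathrm{cyl}_k'$ the surviving $\theta_q$-geodesics again assemble into a single cylinder $\mathrm{cyl}_{k+1}'$, with circumference and width determined by the earlier data and with width decreasing geometrically in $k$, and that consecutive cylinders meet only along saddle connections. A geometric-series computation then gives $\sum_k \mathrm{area}(\mathrm{cyl}_k')=\mathrm{area}(X)$, so the cylinders exhaust $X$ up to a set of measure zero. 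It then remains to identify that leftover set, and this is where $q\in\N$ is essential: because the relevant return times become integers, the leftover turns out to be a single saddle connection $\sigma_q$ in direction $\theta_q$---neither a point nor an infinite concatenation of saddle connections---hence a rigid spine, and the decomposition is otherwise complete. (For non-integer $r$ one expects instead a flexible spine, or a positive-measure remainder, so the construction genuinely breaks.)

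For the second assertion, suppose $\phi$ were a parabolic affine diffeomorphism of the surface fixing this cylinder decomposition; then the eigendirection of $D\phi$ is $\theta_q$. A parabolic with horizontal eigendirection preserves both the width and the circumference of each horizontal cylinder, so $\phi$ can only permute cylinders of the same width; since the $\mathrm{cyl}_k'$ have strictly decreasing (hence pairwise distinct) widths, $\phi$ fixes each $\mathrm{cyl}_k'$ setwise and fixes $\sigma_q$, and on each $\mathrm{cyl}_k'$ it acts as an integer power $n_k$ of the Dehn twist. Writing $D\phi=\begin{bmatrix}1&t\\0&1\end{bmatrix}$ in coordinates in which $\theta_q$ is horizontal, this forces $t = n_k\,\mathrm{circ}(\mathrm{cyl}_k')/\mathrm{width}(\mathrm{cyl}_k')$ for every $k$; equivalently all the moduli $\mathrm{width}(\mathrm{cyl}_k')/\mathrm{circ}(\mathrm{cyl}_k')$ must be integer multiples of $1/t$. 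But from the construction these moduli form a sequence of rationals whose denominators in lowest terms are unbounded in $k$, so no single nonzero $t$ works, and no such $\phi$ exists.

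The main obstacle is the middle step: proving that the union of the cylinders misses \emph{precisely one} saddle connection, rather than a fatter closed set, a single point, or an infinite (flexible) spine. This is the place the hypothesis $q\in\N$ is really exploited, and carrying it out amounts to controlling the behavior of the $\theta_q$-geodesics that penetrate arbitrarily far down the tail $\square_1,\square_2,\square_3,\dots$; the area identity above is what guarantees there is nothing two-dimensional left over, but pinning the one-dimensional remainder down to a single segment is the delicate part.
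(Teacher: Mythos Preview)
Your outline diverges from the paper in a way that may be fatal, and the first divergence is the choice of direction. The paper works in the direction of slope $\tfrac{1}{2-r}=\tfrac{q}{2q-1}$, not $\tfrac{1}{q+1}$; for $q=2$ these are $2/3$ and $1/3$, and they are not related by the known parabolic $\begin{bmatrix}1&0\\1&1\end{bmatrix}$. In the paper's direction the rigid spine is constructed \emph{first} (Theorem~\ref{thm:spine}) as a single saddle connection that winds through \emph{every} square $\square_k$; this global character is precisely what makes it rigid. By contrast, in your direction $\tfrac{1}{q+1}$ the segment from $(0,0)$ to $(1+\tfrac1q,\tfrac1q)$ is indeed a saddle connection, but so is the segment from the lower-left corner of $\square_{k+1}$ to the upper-right corner of $\square_{k+2}$ for \emph{every} $k\ge 0$ (the same two-square computation, rescaled by $r^k$). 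With infinitely many parallel short saddle connections present, the step you yourself identify as the main obstacle---proving the leftover is a \emph{single} saddle connection---is now genuinely in doubt: nothing in your sketch singles one of them out, and a leftover containing several would be a flexible spine, not a rigid one.

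The paper's inductive engine is also quite different from ``delete $\mathrm{cyl}_1',\dots,\mathrm{cyl}_k'$ and look at what remains.'' It exploits the self-similarity of the tail via the explicit map $f_r(x,y)=(rx+1,ry)$, which carries $\square_j$ onto $\square_{j+1}$; applying $f_r$ to $\mathrm{cyl}_k$ yields a partial cylinder missing only pieces in $\square_1$, and those are filled in by the circle rotation $T(x)=x+\tfrac{q}{2q-1}$ on a transversal. The integrality of $q$ enters exactly here, forcing $T$ to have period $2q-1$; the framing saddle connections are built beforehand by a separate IET argument (Section~\ref{sec:bsc}). Finally, your no-parabolic argument is close in spirit but not quite right: ``denominators in lowest terms are unbounded'' would only contradict $n_k/t$ being integral if $t\in\Q$, which is not given. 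What you actually need---and what the paper uses---is that the moduli $\mathrm{circ}/\mathrm{width}$ tend to infinity, so that $n_k=t\cdot(\mathrm{width}/\mathrm{circ})\to 0$ forces $n_k=0$ and hence $t=0$. The paper derives this conceptually: a rigid spine of positive length forces adjacent cylinders to have circumference bounded below and width tending to zero (Lemma~\ref{lem:inf-mod-implies-no-para} and the lemma after it).
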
  

The construction of the spine is shown in Section~\ref{sec:one-dir}, the cylinder decomposition is shown in Section~\ref{sec:cyl}, and the absence of parabolic affine diffeomorphism is shown in Section~\ref{sec:nopara}. Theorem~\ref{thm:main} and Corollary~\ref{cor:no-para-element} together prove Theorem \ref{thm:one}. 

The existence of the rigid spine is interesting in its own right. It serves as an obstruction to the existence of certain affine maps (see Section~\ref{sec:nopara}, Theorem~\ref{thm:no-para}). Moreover, the set of rigid spines on a surface creates restrictions on the Veech group. For instance,

\begin{thm}\label{thm:discrete-veech-group} Assume that a surface has a set of rigid spines. The set of rigid spines is a subset of saddle connections. Then, if the set of rigid spines developed into the plane is discrete, and at least two are linearly independent, the Veech group is discrete.
\end{thm}

This follows from the fact that a rigid spine must be mapped to a rigid spine under an affine diffeomorphism of the surface. Then, the same argument that shows that a discrete set of holonomy vectors implies that the Veech group of a translation surface is discrete will work here. For this argument, see for example,~\cite{HS-intro}. One could apply Veech's argument, see ~\cite{MT-02},~\cite{Veech-89}, which would have the benefit of removing the awkward assumption that there exists two linearly independent developed rigid spines, but this argument would require additional assumptions on the surface. In a subsequent work, we construct various examples of rigid spines on infinite-type translation surfaces and draw further connections to affine diffeomorphisms of these surfaces~\cite{LS-spine}. 

Being finite-area is a strong geometric constraint on the surface, and as such, we would expect new geometric structures (such as spines) to appear that may help us further understand these surfaces, and in particular, the kinds of affine diffeomorphisms that can be realized. See, for example Bowman's work,~\cite{bowman}. In fact, Bowman proves a statement similar to our Theorem~\ref{thm:discrete-veech-group} (see~\cite{bowman}, Theorem 2, and in particular, Lemma 2.4). However, our proposed structure, the set of rigid spines of a surface, is a finer invariant than what Bowman uses.

The focus of the current work is proving the existence of rigid spines where they are not readily apparent by giving an explicit and complete description of the cylinder decomposition corresponding to the rigid spine. Indeed, the cylinder decompositions constructed are complicated and enlightening. The proof of the Theorem~\ref{thm:one} is constructive, and challenging in the absence of renormalization dynamics. Here, we are able to leverage the structure of the surface to inductively construct cylinders. In Section~\ref{sec:one-dir}, we identify a core curve in a special direction, which turns out to be a rigid spine of a cylinder decomposition. This curve wraps around every square torus in the surface. Then we construct a cylinder which turns out to be the widest cylinder in the cylinder decomposition. In Section~\ref{sec:bsc}, we inductively construct a collection of saddle connections which turn out to frame all of the cylinders in our cylinder decomposition. We use two techniques to do this, one involving an infinite interval exchange transformation (IET). See~\cite{Viana} for a description of IETs. In Section~\ref{sec:cyl}, we construct the cylinder decomposition by defining a (discontinuous) map which pushes a cylinder to a subset of the next widest cylinder in the decomposition. We call the subset of a cylinder a ``partial cylinder.'' See Section~\ref{sec:cyl} for a definition. We ``fill in" the missing segments of the cylinder using a circle rotation argument. Indeed, the endpoints of the partial cylinder correspond to periodic points of a circle rotation. In Section~\ref{sec:area-of-cyls}, we compute the modulus and area of each cylinder in the cylinder decomposition. In Section~\ref{sec:nopara}, we show that there cannot be a parabolic element that stabilizes the cylinder decomposition that we construct. This observation follows from a more general observation that we make: the existence of a rigid spine of positive length implies there cannot be a parabolic affine diffeomorphism stabilizing cylinders in the direction of the rigid spine. 

It seems feasible to extend our methods to the case of $r \in \Q$, provided one can find enough cylinders to start the induction process in Section~\ref{sec:cyl}. Moreover, the induction process may involve fixed points of a finite-type interval exchange transformation in lieu of a circle rotation. 

In discussing an earlier draft of this work, Trevi\~no pointed out that the Chamanara surface of parameter $\frac{1}{2}$, when represented as a square, seems to have a rigid spine across one of the diagonals. It seems likely that other experts are also aware of such curves. However, there is nothing currently in the literature written from this perspective. This paper aims to initiate a systematic study of these curves, beginning with a proof of their existence through a complete description of a cylinder decomposition corresponding to a rigid spine on a class of surfaces. %Interestingly, the diagonal in the orthogonal direction is a saddle connection in a cylinder decomposition which has no rigid spine. Moreover, the modulus of the cylinders in this direction are all the same and there is a parabolic affine diffeomorphism which stabilizes the cylinders. See, for example, the notes of Herrlich and Randecker entitled \emph{Notes on the Veech group of the Chamanara surface}.

\subsection{Related work}

Bowman studies the ``geometric limit" of finite translation surfaces converging to an infinite-type translation surface~\cite{bowman-arnouxyoccoz}. Along these lines, we can think of an armadillo tail as the limit of finite-type translation surfaces. Indeed, consider the truncated surface $X_n$ of a geometric armadillo tail. The cylinder decomposition in the special direction persists for all surfaces in this sequence. Moreover, for all finite surfaces in the sequence, there is a parabolic element that preserves both this cylinder decomposition and another that preserves the orthogonal cylinder decomposition. However, in the limit, the cylinders converge to a cylinder decomposition, but the parabolic affine diffeomorphism does not converge to any sensible affine map on the surface.

%For particular directions on an armadillo tail, one can use Trevi\~no's work, Theorem 3 in~\cite{Trevino}, to explore ergodicity of the linear flow in that direction. However, there does not appear to be a (Veech--Trevi\~no) dichotomy in which each direction is either periodic or ergodic: the Veech group seems to be $\Z$ and not a lattice, but this is unclear. 
%
Randecker and Rafi have identified a subset of finite-area surfaces which have nice geometric properties called \emph{essentially finite translation surfaces}~\cite{randecker2016}, and an armadillo tail surface is in this subset. One could ask if all essentially finite translation surfaces have a rigid spine, and if so, how many?

Moreover, it is an interesting question as to what ergodic measures are supported on an armadillo tail (geometric or otherwise). A generalization of a Veech dichotomy of this flavor was done for infinite staircase surfaces by Hooper, Hubert, and Weiss~\cite{hooper-hubert-weiss}. 

Lastly, there is an open question regarding whether or not there exists a finite-area, infinite-type translation surface with a Veech group that is a lattice in $SL_2(\R)$ (see~\cite{DeLaCroix-Hubert-Valdez}). It may seem reasonable to think that there could exist an armadillo tail surface whose Veech group is a lattice, but this seems unlikely due the presence of rigid spines. 

\subsection{Acknowledgements}
The authors would like to thank David Aulicino, Matt Bainbridge, and Pat Hooper for helpful conversations. The authors would also like to thank Rodrigo Trevi\~no for helpful discussions, especially with regards to the existence of a cylinder decomposition in directions orthogonal to a spine direction.

\section{The spine and the first cylinder (or a particular direction on geometric armadillo tails)}\label{sec:one-dir}

The following is a key theorem in which we identify a closed saddle connection which turns out to be a rigid spine of a cylinder decomposition on a certain family of armadillo tails. Every armadillo tail is an infinite connected sum of tori; this particular saddle connection wraps around each torus. Note that the following theorem requires no assumption on the parameter $r$. By $\frac{1}{2-r}$-direction, we mean the direction with slope $\frac{1}{2-r}$ relative to our polygonal representation. For example, see Figure~\ref{fig:geometric-gs}.

\begin{figure}[htbp]
	\begin{center}
	\includegraphics[width=0.7\linewidth]{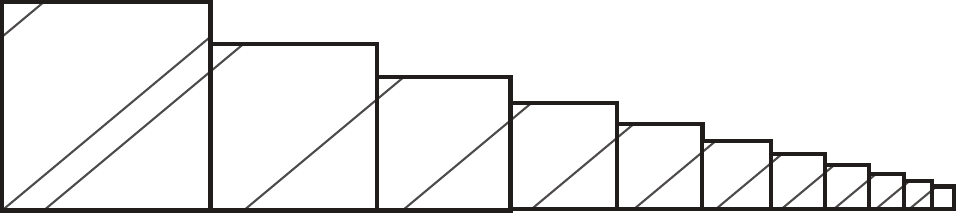}
	
	\caption{A geometric armadillo tail ($r=4/5$) with a trajectory of slope $5/6$}
    \label{fig:geometric-gs}
 \end{center}
\end{figure}

\begin{thm}\label{thm:spine} On a geometric armadillo tail with $r \in (0,1),$ there exists a closed saddle connection in the $\frac{1}{2-r}$-direction that intersects every square torus. 
\end{thm}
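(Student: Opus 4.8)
\emph{Plan of proof.} The plan is to exhibit the saddle connection explicitly, as the straight-line trajectory $\gamma$ issuing from the singularity in the $\tfrac{1}{2-r}$-direction, and to read off its itinerary through the squares $\square_k$. Set $a_0=0$ and $a_k=\sum_{j=1}^k r^{j-1}=\tfrac{1-r^k}{1-r}$, so that $\square_k=[a_{k-1},a_k]\times[0,r^{k-1}]$ and $a_k\nearrow\tfrac{1}{1-r}$. Recall from the construction that the top edge of $\square_k$ is glued to its bottom edge by the downward translation of length $r^{k-1}$; that the bottom-left corner $(0,0)$ is a copy of the singularity; and that the left edge of $\square_1$, cut at the heights $r>r^2>r^3>\dots$, is identified piece-by-piece with the risers $\{a_k\}\times[r^k,r^{k-1}]$ by horizontal translations (the $k$-th piece with the riser of $\square_k$, via $x\mapsto x\pm a_k$). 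I would take $\gamma$ to be the ray from $(0,0)$ in direction $(2-r,1)$, which is the trajectory drawn in Figure~\ref{fig:geometric-gs}.

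\emph{Tracking $\gamma$.} First I would follow $\gamma$ through $\square_1$ by hand. It runs from $(0,0)$ to $(1,\tfrac{1}{2-r})$; since $r<\tfrac{1}{2-r}<1$ (the first inequality is $(1-r)^2>0$), this point lies on the riser of $\square_1$, so $\gamma$ reappears at $(0,\tfrac{1}{2-r})$ on the left edge of $\square_1$. From there it reaches the top edge at $(1-r,1)$, reappears at $(1-r,0)$, and exits the right edge of $\square_1$ at $(1,\tfrac{r}{2-r})$; as $\tfrac{r}{2-r}<r$, this point lies on the part of that edge shared with $\square_2$, so $\gamma$ enters $\square_2$ at $(a_1,\tfrac{r}{2-r})$. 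The core of the argument is then an induction showing that for every $k\ge2$ the picture is uniform: $\gamma$ enters $\square_k$ at $(a_{k-1},\tfrac{r^{k-1}}{2-r})$, rises to the top edge at $x$-coordinate $a_{k-1}+(1-r)r^{k-1}\in(a_{k-1},a_k)$, reappears on the bottom edge, and then exits the right edge of $\square_k$ at $(a_k,\tfrac{r^{k}}{2-r})$, thereby entering $\square_{k+1}$ at a point of the same form. Each transition is a one-line computation; the two facts that make the induction run — that on the first pass $\gamma$ meets the top edge of $\square_k$ strictly before its right edge, and that on the second pass it meets the right edge at height $\tfrac{r^{k}}{2-r}<r^{k}$, hence crosses into $\square_{k+1}$ rather than onto the riser of $\square_k$ — both reduce to $0<r<1$.

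\emph{Finishing.} From this itinerary, $\gamma$ meets the interior of $\square_k$ for every $k$, hence intersects every torus in the connected sum. The portion of $\gamma$ inside $\square_k$ has total horizontal extent $r^{k-1}$ for $k\ge2$ (one pass of extent $(1-r)r^{k-1}$ followed by one of extent $r^{k}$), and extent $2$ inside $\square_1$, so $\gamma$ has finite total length, the convergent sum $\tfrac{\sqrt{(2-r)^2+1}}{2-r}\bigl(2+\sum_{k\ge2}r^{k-1}\bigr)=\tfrac{\sqrt{(2-r)^2+1}}{1-r}$, while its successive entry points $(a_{k-1},\tfrac{r^{k-1}}{2-r})$ converge to $(\tfrac{1}{1-r},0)$. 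One checks (again from $0<r<1$) that every crossing point computed above lies in the interior of an edge, so $\gamma$ meets no vertex except at its start. Its two ends are therefore the singularity: the start $(0,0)$ is a vertex, and the accumulation point $(\tfrac{1}{1-r},0)$ is the wild singularity of the surface. Hence $\gamma$ is a closed saddle connection in the $\tfrac{1}{2-r}$-direction that intersects every torus.

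\emph{Main obstacle.} The individual computations are elementary, so the work is chiefly organizational: isolating the slightly anomalous behavior in $\square_1$ (where $\gamma$ makes an extra wrap across the riser) from the uniform behavior in $\square_k$, $k\ge2$, and verifying at each stage that $\gamma$ meets the edges of $\square_k$ in the correct cyclic order. The one genuinely non-routine point is the claim that the accumulation point $(\tfrac{1}{1-r},0)$ of $\gamma$ is the wild singularity, so that $\gamma$ truly closes up into a saddle connection; this is exactly where the infinite-type structure enters, and it is what permits a saddle connection assembled from infinitely many segments — no such trajectory exists on a finite-type translation surface.
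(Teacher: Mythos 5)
Your proposal is correct and follows essentially the same route as the paper: explicitly tracking the trajectory from the origin in the $\frac{1}{2-r}$-direction through the portal and roof of $\square_1$ into $\square_2$, and then invoking the self-similarity of consecutive squares (your induction on $k$ is exactly the paper's ``partitions subsequent roofs with the same ratio'' step). Your added verification that the forward end accumulates at $\left(\frac{1}{1-r},0\right)$ and hence closes up at the wild singularity is a point the paper leaves implicit, and your computations (entry heights $\frac{r^{k-1}}{2-r}$, total length $\frac{\sqrt{(2-r)^2+1}}{1-r}$) are consistent with the paper's later formulas.
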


\begin{proof} We refer to the top (horizontal) edge of a square by the \emph{roof} and the right (vertical) edge of a square that is identified to a segment on the $y$-axis by the \emph{portal.}

We start from the origin, the lower left vertex of $\square_1$. Since $r<\frac{1}{2-r}<1,$ the straight line of slope $\frac{1}{2-r}$ through the origin hits the portal of $\square_1$ at point $(1,\frac{1}{2-r}).$ By identification with the $y$-axis (the left edge of $\square_1$), the trajectory continues and hits the roof of $\square_1$ at $\left(1-r, 1\right).$ By identification with the bottom edge of $\square_1,$ the trajectory continues from $\left(1-r,0\right)$ and hits the roof of $\square_2$ at $\left(1+r(1-r),r\right).$ The trajectory partitions both roofs with a fixed ratio. Due to similarity in consecutive squares, it continues to partition subsequent roofs with the same ratio. Needless to say, the trajectory wraps around every square torus without hitting any vertex. \end{proof}

This saddle connection has an interesting topological feature. 

\begin{prop} On a geometric armadillo tail with parameter $r\in(0,1),$ the saddle connection constructed above is a non-separating simple closed curve. 
\end{prop}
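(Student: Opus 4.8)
The plan is to treat the two assertions in turn, writing $X$ for the geometric armadillo tail and $\sigma$ for the saddle connection produced in Theorem~\ref{thm:spine}.

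\emph{Simplicity.} This can be read off from the combinatorics already displayed in the proof of Theorem~\ref{thm:spine}. The trajectory meets $\square_1$ in three straight segments --- from the origin to the portal, from the $y$-axis to the roof, and from the bottom edge to the edge shared with $\square_2$ --- and thereafter it crosses each $\square_k$ with $k\ge 2$ exactly once: it enters through the edge shared with $\square_{k-1}$, meets the roof once and drops to the bottom edge, and exits through the edge shared with $\square_{k+1}$. The decisive point is that once the trajectory has left $\square_1$ it never meets a portal again: a short computation shows that it crosses the right edge of $\square_k$ at height $\tfrac{r^{k}}{2-r}$, which lies strictly below the portal of $\square_k$ (that is, below $l_{k+1}=r^{k}$) because $r<1$. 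Hence $\sigma$ never returns to the $y$-axis, the squares are visited in the increasing order $\square_1,\square_2,\dots$, and inside a single square the finitely many pieces of $\sigma$ are parallel of slope $\tfrac1{2-r}$ and lie on distinct lines, so they are pairwise disjoint apart from the one boundary point at which two consecutive pieces join as the trajectory passes to the next square (which is not a self-intersection). Since $\sigma$ hits no vertex of the polygon in its interior (as recorded at the end of the proof of Theorem~\ref{thm:spine}), it has no self-intersections and passes through the wild singularity only at its two identified endpoints; thus $\sigma$ is a simple closed curve.

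\emph{Non-separation: the criterion.} Because $X$ is orientable and $\sigma$ is simple, $\sigma$ is two-sided, and I would invoke the standard fact that a two-sided simple closed curve $c$ is non-separating as soon as some loop meets $c$ transversally in exactly one point: if $c$ split $X$ into $A\cup B$ glued along $c$, such a loop would leave its unique crossing on the $A$-side of $c$, hence would be confined to the component $\Int A$ of $X\setminus c$, yet would return to the crossing from the $B$-side --- a contradiction. Nothing in this argument uses that $X$ is of infinite type, so it is enough to exhibit one loop meeting $\sigma$ exactly once.

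\emph{Non-separation: the transverse loop.} For $\gamma$ I would take a closed horizontal loop at a height $y_0\in(r,1)$ inside $\square_1$; these loops are precisely the waist curves of the topmost horizontal cylinder $\mathrm{cyl}_1$ of $\mathcal{C}$, and each closes up because the portal identification glues the two points at height $y_0$ on the left and right edges of $\square_1$. By the proof of Theorem~\ref{thm:spine}, $\sigma\cap\square_1$ is the union of the three segments named above, lying on the lines $y=\tfrac{x}{2-r}$, $y=\tfrac{x+1}{2-r}$, and $y=\tfrac{x-(1-r)}{2-r}$; so, as $y_0$ ranges over $(r,1)$, the horizontal line at height $y_0$ meets the first segment iff $y_0\le\tfrac1{2-r}$, the second iff $y_0\ge\tfrac1{2-r}$, and the third iff $y_0\le\tfrac r{2-r}$. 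Since $\tfrac r{2-r}<r<\tfrac1{2-r}<1$ for every $r\in(0,1)$, any choice $y_0\in(r,\tfrac1{2-r})$ makes $\gamma$ meet $\sigma$ in a single point, interior to both curves, transversally (as $\gamma$ is horizontal while $\sigma$ has nonzero slope); and $\gamma$ can meet $\sigma$ nowhere else because $\gamma\subset\square_1$. By the criterion above, $\sigma$ is non-separating. The only genuine work is the bookkeeping in this last step --- isolating which sub-arcs of $\sigma$ lie in $\square_1$ and locating the admissible band $(r,\tfrac1{2-r})$ of heights; transversality, the confinement of $\gamma$ to $\square_1$, and the topological conclusion are all routine.
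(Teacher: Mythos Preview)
Your proof is correct, and it takes a genuinely different route from the paper's. The paper's entire argument is the one-line assertion ``If we color the surface on one side of the saddle connection, we color the entire surface,'' which amounts to claiming that $X\setminus\sigma$ is connected and leaves simplicity implicit in the construction of Theorem~\ref{thm:spine}. You instead (i) make simplicity explicit by cataloguing the arcs of $\sigma$ square-by-square and checking they are parallel and on distinct lines, and (ii) prove non-separation via the algebraic-intersection criterion, exhibiting a horizontal waist curve $\gamma$ of the top horizontal cylinder in $\square_1$ that meets $\sigma$ transversally in exactly one point. Your approach buys rigor and a concrete homological witness (the loop $\gamma$ shows directly that $[\sigma]\neq 0$ in $H_1$), at the cost of the short height computation isolating the band $(r,\tfrac{1}{2-r})$; the paper's coloring remark is terser but is really a restatement of the conclusion rather than an argument.
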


\begin{proof} If we color the surface on one side of the saddle connection, we color the entire surface. 
\end{proof}

Via renormalization (under the action of $\begin{pmatrix}1 & 0 \\
-1 & 1\end{pmatrix}$, see Remark~\ref{rem:1011}), one can consider the trajectory with slope $\frac{1}{2-r} -1 = \frac{r-1}{2-r}$ direction. Starting from the upper left vertex at $(0,1),$ the trajectory hits the portal on $\square_1$ at $\left(1,\frac{1}{2-r}\right).$ By identification, continuing from $\left(0,\frac{1}{2-r}\right),$ the trajectory does not hit any roof or portal and tends to $\left(\frac{1}{1-r},0\right),$ hence producing a saddle connection. In fact, given our polygonal representation, any trajectory starting from $(0,0)$ with slope $\frac{1}{2-r}+\textbf{n},$ for any $\textbf{n} \in \N,$ (or $(0,1)$ with slope $\frac{1}{2-r}-\textbf{n},$ for $\textbf{n} \in \N$) yields a saddle connection that goes through every square torus. We will see that this saddle connection is the rigid spine of a cylinder decomposition. 

In the following theorem, we show that for geometric armadillo tails with $r=\frac{1}{q}$ for $q \in \N \setminus \{1\},$ there exists not only a saddle connection but a cylinder in the $\frac{1}{2-r}$-direction. See Figure~\ref{fig:cyl1-on-q=2and3}. We call this cylinder $\mathrm{cyl}_1$, and the existence of this cylinder will be part of the base case for the induction in Section \ref{sec:cyl}. 

\begin{figure}[h]
    \centering
    \includegraphics[width=0.9\linewidth]{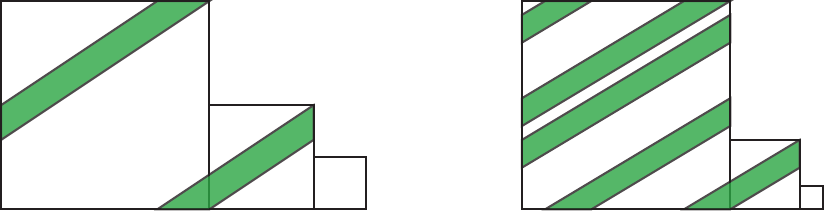}
    \caption{The cylinder $\mathrm{cyl}_1$ on a geometric armadillo tail with $r=1/2$
(left) and $1/3$ (right)}
    \label{fig:cyl1-on-q=2and3}
\end{figure}

\begin{thm}\label{thm:cyl1} Given a geometric armadillo tail with parameter $r=\frac{1}{q},$ $q\in\N\setminus\{1\},$ there exists a cylinder in the $\frac{1}{2-r}$-direction which lies entirely in $\square_1 \cup \square_2.$ \end{thm}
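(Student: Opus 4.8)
The plan is to construct the required cylinder directly, as a one‑parameter family of closed geodesics of slope $s:=\frac{1}{2-r}=\frac{q}{2q-1}$ parametrized by the height $h$ at which they cross the edge $\{1\}\times[0,r]$ common to $\square_1$ and $\square_2$. First I would record the identifications internal to $\square_1\cup\square_2$: for $k=1,2$ the roof of $\square_k$ is glued to its own bottom by vertical translation; the portal of $\square_1$, namely $\{1\}\times[r,1]$, is glued to $\{0\}\times[r,1]$ by horizontal translation; the portal of $\square_2$, namely $\{1+r\}\times[r^2,r]$, is glued to $\{0\}\times[r^2,r]$; and $\{1\}\times[0,r]$ is the shared edge. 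For $h\in\bigl(0,r(1-s)\bigr)$ I would trace the slope‑$s$ trajectory $\gamma_h$ through $(1,h)$, oriented rightward. It first crosses $\square_2$; since $h+sr<r$ (as $h<r(1-s)$) and $h+sr\ge sr>r^2$ (as $s>r$), it exits through the portal of $\square_2$ at $(1+r,h+sr)$, glued to $(0,h+sr)=\bigl(0,h+\frac1{2q-1}\bigr)$ on the left edge of $\square_1$. Inside $\square_1$, using the roof--bottom identification and the portal of $\square_1$ whenever it applies, the trajectory meets the right edge $\{1\}\times[0,1]$ of $\square_1$ at the successive heights $h+\frac{1+jq}{2q-1}\bmod 1$, $j=1,2,\dots$ --- the orbit of $h+sr$ under the circle rotation by $s$ --- re‑entering $\square_1$ through the portal of $\square_1$ at the same height whenever that height lies in $(r,1)$, and crossing the shared edge back into $\square_2$ the first time the height lands in $[0,r)$.

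The key step is a residue computation modulo $2q-1$. Since $2q\equiv1\pmod{2q-1}$, one gets $1+jq\equiv 1+\frac j2\pmod{2q-1}$ for $j$ even and $1+jq\equiv q+1+\frac{j-1}2\pmod{2q-1}$ for $j$ odd; hence $1+jq\equiv0\pmod{2q-1}$ first occurs at $j=2q-3$, while for $1\le j\le 2q-4$ the residue of $1+jq$ lies in $\{2,\dots,q-1\}\cup\{q+1,\dots,2q-2\}$, so that $\frac{1+jq}{2q-1}\bmod 1\in(\frac1q,1)=(r,1)$; and since $0<h<r(1-s)<\frac1{2q-1}$, the perturbed heights $h+\frac{1+jq}{2q-1}\bmod1$ remain in $(r,1)$ for these $j$. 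Thus $\gamma_h$ stays inside $\square_1$ through step $j=2q-4$, and at step $j=2q-3$ it returns to the right edge of $\square_1$ at height $h+\frac{1+(2q-3)q}{2q-1}\equiv h\pmod1$, that is, back to $(1,h)$, crossing into $\square_2$ with its original slope. Hence each $\gamma_h$ is a closed geodesic contained in $\square_1\cup\square_2$, and a short supplementary check with the same inequalities shows $\gamma_h$ avoids every vertex.

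I would then assemble the cylinder. The geodesics $\gamma_h$, $0<h<r(1-s)$, are parallel, simple (a check of $y$‑intercepts shows $\gamma_h$ retraces no segment), and pairwise disjoint (each meets $\{1\}\times(0,r)$ exactly once, so any two that share a point coincide); moreover they vary continuously with $h$. Hence their union is an embedded open cylinder of positive width in the $\frac1{2-r}$‑direction, contained in $\square_1\cup\square_2$. Its closure adjoins the two boundary orbits: that of the vertex $(1,0)$ at the $h\to0^+$ end, and at the other end that of $(1,r(1-s))$, which runs forward into the vertex $(1+r,r)$ and backward into the vertex $(1,r)\sim(0,r)$. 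The residue computation shows each of these meets a vertex after finitely many crossings, so each is a finite concatenation of saddle connections, and the resulting closed subspace is a cylinder in the $\frac1{2-r}$‑direction lying entirely in $\square_1\cup\square_2$, as required.

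I expect the main obstacle to be precisely the bookkeeping of $\gamma_h$ inside $\square_1$: proving via the mod‑$(2q-1)$ computation that every intermediate return to the right edge of $\square_1$ falls in the portal range $(r,1)$ rather than the shared‑edge range $[0,r)$ --- so that $\gamma_h$ neither escapes into $\square_3$ nor closes up before index $2q-3$ --- together with the bound $h<r(1-s)$ needed to keep all these heights strictly inside $(r,1)$ and $\gamma_h$ off the vertices. The behavior in $\square_2$, the disjointness/transversal argument, and the identification of the bounding saddle connections are routine by comparison.
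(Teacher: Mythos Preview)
Your proposal is correct and takes essentially the same approach as the paper: both arguments reduce to tracking the linear flow in $\square_1$ via the circle rotation $y\mapsto y+\tfrac{q}{2q-1}$ and checking that the intermediate returns to $\{1\}\times[0,1]$ all land in the portal range $(r,1)$. The paper organizes this into three steps (construct the bottom saddle connection from $(1,0)$, construct the top one by a vertical shift of $\tfrac{q-1}{q(2q-1)}=r(1-s)$, then fill in the interior), whereas you parametrize the interior geodesics $\gamma_h$ directly and recover the boundary saddle connections as limits; your mod~$(2q-1)$ residue computation is a clean repackaging of the paper's explicit case-checking.
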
 

There are three parts to this proof. First, we show that there is a saddle connection (in the $\frac{1}{2-r}$-direction) that lies entirely in $\square_1 \cup \square_2.$ Secondly, we will show that there is another saddle connection in $\square_1 \cup \square_2$ parallel to the first one and of the same length. In the third step, we will show that there is no saddle connection between the two saddle connections, and consequently that the interstitial space is foliated by closed geodesics, hence yielding a cylinder.

\begin{proof} 
Note, for $q = \N \setminus \{1\},$ the slope of the trajectory is $\frac{1}{2-r} = \frac{q}{2q-1}.$ 

\textbf{Step 1.} We show that the trajectory from $(1,0)$ with slope $\frac{q}{2q-1}$ stays entirely in $\square_1 \cup \square_2.$

Start from $(1,0)$ we hit the portal of $\square_2$ at $\left(1+\frac{1}{q}, \frac{1}{2q-1}\right),$ hence the first time the trajectory hits the $y-$axis is at $\left(0,\frac{1}{2q-1}\right).$ We continue and hit $\left(1, \frac{1+q}{2q-1}\right),$ which lies in the portal of $\square_1$ since $\frac{1}{q} < \frac{1+q}{2q-1} < 1.$ It is then identified to $\left(0,\frac{1+q}{2q-1}\right).$ Note that if we keep hitting the portal, the $n$th time the trajectory hits the vertical axis is at $\left(0,\frac{1}{2q-1} + \frac{(n-1) q}{2q-1}-\lfloor\frac{1}{2q-1} + \frac{(n-1) q}{2q-1}\rfloor\right).$ In fact, since $\frac{1}{2q-1} < \frac{1}{q} < \frac{2}{2q-1},$ we always hit the portal unless $\frac{1}{2q-1} + \frac{(n-1) q}{2q-1}-\lfloor\frac{1}{2q-1} + \frac{(n-1) q}{2q-1}\rfloor = \frac{1}{2q-1}.$ Pick $n=2q-2,$ then the trajectory hits the singularity at $(1,1).$ In other words, the trajectory goes through $\square_2$ exactly once at the beginning and stays in $\square_1$ until it hits $(1,1)$. The saddle connection that we constructed will serve as the ``bottom'' boundary saddle connection of $\mathrm{cyl}_1.$ 

Furthermore, this is the first time the trajectory hits a singularity. This follows from the fact that $\gcd(q,2q-1)=1.$ The trajectory hits the $y$-axis at points $\left\{(0,y): y = \frac{(n-1)q+1}{2q-1}-\lfloor\frac{(n-1)q+1}{2q-1}\rfloor, n = 1, \ldots, 2q-3\right\}$. The $y$-coordinate of the elements can be rearranged as $$\left\{\frac{1}{2q-1}, \frac{1+q}{2q-1}, \frac{1+2q}{2q-1}, \ldots, \frac{1+(2q-3)q}{2q-1} \equiv 1\right\}.$$ Note that if we continue to add $\frac{q}{2q-1},$ we hit $\frac{1+(2q-2)q}{2q-1} \equiv \frac{q}{2q-1}$ and $\frac{1+(2q-1)q}{2q-1} \equiv \frac{1}{2q-1},$ which brings us back to the beginning of the sequence. In other words, the trajectory wraps around $\square_1$ exactly $2q-3$ times hitting the $y$-axis at $\left\{\left(0,\frac{i}{2q-1}\right)\right\}_{i=1,\neq q}^{2q-2}.$ 

Alternatively, notice that once the trajectory enters $\square_1$, we can encode the points that hit the portal (which are identified to the left side of $\square_1$) via a circle rotation that arises as a section of the linear flow with slope $\frac{q}{2q-1}$. We use this perspective in Steps 2 and 3 below. 

\textbf{Step 2.} Next, we construct a saddle connection which will be the ``top" boundary saddle connection of a cylinder. To do this, we will take the bottom saddle connection and show that if we shift the saddle connection vertically (in the polygonal representation) by $\frac{q-1}{q(2q-1)}$, that we find another saddle connection. A consequence of Step 3, which follows, is that $\frac{q-1}{q(2q-1)}$ is the \emph{skew-width} of this cylinder, the vertical distance (with respect to the polygonal representation) between the two saddle connections. The skew-width is formally defined in Section \ref{sec:cyl}.

Take the set of points where the bottom saddle connection hits the $y$-axis and add $\frac{q-1}{q(2q-1)}:$ $$\left\{\dfrac{i}{2q-1}\right\}_{i=1,\neq q}^{2q-2} + \dfrac{q-1}{q(2q-1)} = \left\{\frac{(i+1)q-1}{q(2q-1)}\right\}_{i=1,\neq q}^{2q-2},$$ where a set $+$ number denotes adding the number to each element of the set. We split this set into three cases: (1) $i=1, \ldots , q-2,$ (2) $i=q-1,$ and (3) $i=q+1, \ldots , 2q-2.$ 

Let $T:[0,1]/_\sim \to [0,1]/_\sim$ be a circle rotation where $T(x) = x + \frac{q}{2q-1}.$ Note that the circle rotation is a section of the linear flow on $\square_1$, provided we never enter $\square_2$. We can guarantee that the linear flow does not enter $\square_2$ provided the iterates of the circle rotation are greater than or equal to $\frac{1}{q}$. In fact, every element in the set $\left\{\frac{(i+1)q-1}{q(2q-1)}\right\}_{i=1,\neq q}^{2q-2}$ is greater than or equal to $\frac{1}{q}$. Moreover, the image of these points under $T$ is always greater than $\frac{1}{q}$, except for the image of the point corresponding to case (2): $i=q-1$. We will address this when it arises.

First, observe that $T$ maps points corresponding to case (1) to points corresponding to case (3). Indeed, take points in case (1), ($i=1, \ldots, q-2$), and apply $T$. We have $$T\left(\dfrac{(i+1)q-1}{q(2q-1)}\right) = \dfrac{(i+1+q)q-1}{q(2q-1)} \in \, \left\{\dfrac{(j+1)q-1}{q(2q-1)}\right\}_{j=q+1}^{2q-1}.$$

Next, observe that the points in case (3) map to points in case (1), with one exception, in which case the image of point corresponds to the point in case (2). Take points in case (3), ($i=q+1, \ldots, 2q-2$) and apply $T$. For all cases except $i=2q-2$ we have $$T\left(\dfrac{(i+1)q-1}{q(2q-1)}\right) = \dfrac{(i+1+q)q-1}{q(2q-1)} \equiv \dfrac{(i-q+2)q-1}{q(2q-1)} \in \, \left\{\dfrac{(j+1)q-1}{q(2q-1)}\right\}_{j=1}^{q-2}.$$ If $i=2q-2,$ then we have $T\left(\frac{(2q-1)q-1}{q(2q-1)}\right)= \frac{q^2-1}{q(2q-1)} \in \, \left\{\frac{(i+1)q-1}{q(2q-1)}\right\}_{i=q-1},$ which corresponds to the point in case (2). 

Lastly, consider the point in case (2), where $i=q-1.$ We have $$T\left(\dfrac{q^2-1}{q(2q-1)}\right) = \dfrac{2q^2-1}{q(2q-1)} \equiv \dfrac{q-1}{q(2q-1)} < \dfrac{1}{q}.$$ Since the trajectory hits the right side of $\square_1$ below the portal, we continue into $\square_2$ and hit $\left(1+\frac{1}{q}, \frac{1}{q}\right)$, the singularity at the upper right vertex of $\square_2$. 

Now, observe that if we begin with $i=1$, the sequence of iterates of $T$ will include every point in case (1) and case (3), and then hit the point in case (2). In other words, the suspension of the linear flow is a closed saddle connection that passes through $\left(0, \frac{1}{q}\right)$ that lies entirely in $\square_1$ and $\square_2.$ 

\textbf{Step 3.} Lastly, we show that between these two saddle connections there is no other saddle connection in with slope $\frac{q}{2q-1}$. In other words, there are no saddle connections that hit the vertical axis between $\left(0,\frac{1}{2q-1}\right)$ and $\left(0,\frac{1}{q}\right).$ 

Define the intervals along the $y$-axis with $y$-coordinates in $(\frac{i}{2q-1}, \frac{i}{2q-1} + \frac{q-1}{q(2q-1)})$, for $i \in \{1, 2, \ldots, 2q-2\} \setminus\{q\}$. The lower bound in each interval coincides with an intersection of the (bottom) saddle connection constructed in step 1 with the $y$-axis. Similarly, the upper bound coincides with an intersection of the (top) saddle connection constructed in Step 2 with the $y$-axis. 

First, observe that the intervals do not contain a singularity. The only appearances of the wild singularity along the $y$-axis for $y>\frac{1}{2q-1}$ occur for $y = \frac{1}{q}$ and $y=1$, neither of which land inside any of the intervals. 

Now, fix any $0 < \varepsilon < \frac{q-1}{q(2q-1)}$ and consider the collection of points $(0,y_i)$ for $y_i  = \frac{i}{2q-1} + \varepsilon$, for $i \in \{1, 2, \ldots, 2q-2\} \setminus\{q\}.$ (There is one point in each of the intervals.) By applying the map $T$ to the point in the interval corresponding to $i=1$, we see that the image is the point in the collection corresponding to the  interval $i = q+1$. The argument is the same as the one given in Step (2). We continue applying the map $T$ until we reach the last interval in the set, $(0,y_{q-1})$. Here the image of the circle rotation contains the singular point infinitely many times, but this is because the circle rotation no longer applies. Indeed, $T(y_{q-1}) < \frac{1}{q}$ which means that the suspended flow is actually entering $\square_2$. (This is identical to the situation described in case (2) in step 2.) If we flow in the linear direction from $(0,y_{q-1})$, we hit the portal in $\square_2$ at $(1+\frac{1}{q}, y_1)$, which means that the linear flow starting at $(0,y_1)$ is a closed geodesic. 

Moreover, since the choice of $\varepsilon$ allows for any point in the interval, the suspension of these intervals consists of closed geodesics.

In conclusion, on a geometric armadillo tail with parameter $r=\frac{1}{q},$ there exists a cylinder that lies entirely in $\square_1$ and $\square_2.$ We call this cylinder $\mathrm{cyl}_1.$ 
\end{proof}

\begin{remark} The assumption that $r=\frac{1}{q}$ cannot be removed. For example, if $r=2/3,$ the saddle connection starting from $(1,0)$ in the $\frac{1}{2-r}$-direction hits the wild singularity at $(1+r+r^2,r^2),$ the upper-right vertex of $\square_3,$ and is contained in $\square_1\cup\square_2\cup\square_3.$\end{remark}

\section{Framing (or finding the bottom saddle connection of each cylinder by induction)}\label{sec:bsc}

In this section, we construct concatenations of saddle connections that will bound the cylinders in our cylinder decomposition. We will label them $\mathrm{bsc}_{k}$, and in Section~\ref{sec:cyl}, these sets will be realized as the \textbf{b}ottom boundary \textbf{s}addle \textbf{c}onnections of the $k$th widest cylinder in the cylinder decomposition. To reduce the notational complexity, and since all of the following statements fix $q$, we suppress the dependence on $q$.

We define $\mathrm{bsc}_1$ as the single saddle connection constructed in Step 1 of Theorem~\ref{thm:cyl1}. However, we construct $\mathrm{bsc}_2$ as a concatenation of two saddle connections, one being the saddle connection constructed in Step 2 of Theorem~\ref{thm:cyl1} (the top of the $\mathrm{cyl}_1$), the other being a saddle connection that does not bound $\mathrm{cyl}_1$. This second saddle connection will be realized as a part of the bottom boundary of the second widest cylinder which does not run adjacent to $\mathrm{cyl}_1$. We call this second saddle connection $\mathrm{bsc}_2^{\prime}$, and construct it in Lemma~\ref{lem:bsc2}. See Figure~\ref{fig:bsc2prime}. In fact, $\mathrm{bsc}_k$ will always be a concatenation of two saddle connections for $k \geq 2.$ 

%\textcolor{blue}{A simple version of this construction can be seen in Figure~\ref{fig:horizontal}, where $\mathrm{bsc}_1$ is the top of square 1 and $\mathrm{bsc}_2$ is the concatenation of two saddle connections: 1) the saddle connection connecting $(0,1/2)$ and $(1,1/2),$ and 2) the top of square 2.}

Assuming Lemma~\ref{lem:bsc2}, observe that we can write $\mathrm{bsc}_{2} = \left(\mathrm{bsc}_{1} + \frac{q-1}{q(2q-1)}\right) \oplus \mathrm{bsc}^{\prime}_2$, where $\oplus$ means concatenate, and addition means shifting the saddle connections in $\mathrm{bsc}_1$ vertically by $\frac{q -1 }{q(2q-1)}$. To shift saddle connections vertically, we remove the singular point from the saddle connections, then vertically shift, and then take the closure. Note that a vertical shift is not well-defined at a singular point. Also, note that $\frac{q-1}{q(2q-1)}$ will be the skew-width of the first cylinder, where the skew-width is the vertical distance with respect to the polygonal representation between the saddle connections bounding the cylinder.

We will use this observation to inductively construct $\mathrm{bsc}_k = \left(\mathrm{bsc}_{k-1} + \frac{q-1}{q^{k-1}(2q-1)}\right) \oplus \mathrm{bsc}_k^{\prime}$. We note that $\frac{q-1}{q^{k-1}(2q-1)}$ will be the skew-width of the $(k-1)$th cylinder. In Subsection~\ref{ssec:bsc_k^prime}, we will construct $\mathrm{bsc}_k^{\prime}$ by carefully tracking segments along the linear flow. In Subsection~\ref{ssec:bsc_k}, we will vertically shift each $\mathrm{bsc}_{k-1}^{\prime}$ by $\frac{q-1}{q^{k-1}(2q-1)}$, and extend this along the linear flow to a single saddle connection. In lieu of carefully tracking the segments along the linear flow, we will use an interval exchange transformation. 

\subsection{Constructing $\mathrm{bsc}_k^{\prime}$}\label{ssec:bsc_k^prime}

In the following lemma, for every $q \in \N\setminus \{1\}$, we construct $\mathrm{bsc}_2^{\prime}$. We use the linear flow with slope $\frac{q}{2q-1}$, but the reader may also interpret the work as identifying fixed points of an IET arising from a section of the linear flow. We will use this perspective in subsection \ref{ssec:bsc_k}.

\begin{figure}
    \centering
    \includegraphics[width=1\linewidth]{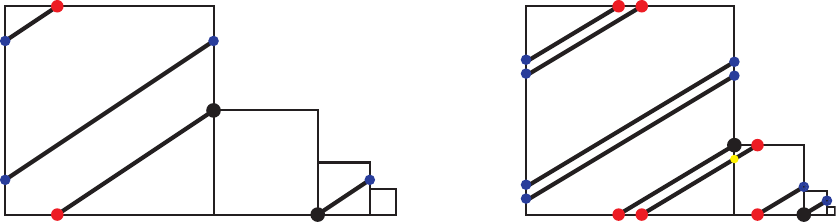}
    \caption{ $\text{bsc}_2'$ for $r=1/2$ (left) and 1/3 (right). Points that go through the portal are marked blue, points that go through the roof are marked red. The appearance of the wild singularity on the curve is marked in black. A special point referenced in a proof below is in yellow. The dots are sized in the order black, red, blue, and yellow, where black is largest.}
    \label{fig:bsc2prime}
\end{figure}

\begin{lem}\label{lem:bsc2} Given any geometric armadillo tail with parameter $r=\frac{1}{q},$ $q\in \N \setminus \{1\},$ there exists a saddle connection that we call $\mathrm{bsc}_2'$.\end{lem}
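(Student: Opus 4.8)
The plan is to produce $\mathrm{bsc}_2'$ explicitly as a single linear trajectory of slope $\frac{q}{2q-1}$ and to verify directly that it is a saddle connection. Start the trajectory at the point $(1+\tfrac1q,\,0)$ --- the lower-left corner of $\square_3$, which is also the lower-right corner of $\square_2$ and hence a copy of the singular point --- and follow it in the direction of slope $\frac{q}{2q-1}$. I claim it returns to the singular point, in fact to the vertex $(1,\tfrac1q)=(1,l_2)$ (the upper-left corner of $\square_2$), without meeting any vertex in its interior, and that it stays inside $\square_1\cup\square_2\cup\square_3$. This trajectory is $\mathrm{bsc}_2'$, and it concatenates with the top of $\mathrm{cyl}_1$ (which is based at that same copy of the singular point on the right edge of $\square_2$) to form $\mathrm{bsc}_2$.

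First I would track the trajectory across $\square_3$: it crosses $\square_3$ once and hits the portal of $\square_3$ at $(1+\tfrac1q+\tfrac1{q^2},\ \tfrac{1}{q(2q-1)})$; since $\tfrac1{q^3}<\tfrac{1}{q(2q-1)}<\tfrac1{q^2}$, this lies in the interior of the portal and is identified with $(0,\,\tfrac{1}{q(2q-1)})$ on the left edge of $\square_1$. The main step is then an inductive passage through $\square_1$. Call the point $(0,\,\tfrac{j}{q(2q-1)})$ on the left edge of $\square_1$ ``position $j$'', for $j=1,\dots,q-1$; the trajectory enters at position $1$. Using the arithmetic of the section circle rotation $T(y)=y+\tfrac{q}{2q-1}\bmod 1$ from Step 2 of Theorem~\ref{thm:cyl1}, I would show that from position $j$ the trajectory meets the portal of $\square_1$ at height $\tfrac{j+q^2}{q(2q-1)}$, then crosses the roof of $\square_1$ once and returns to the right edge of $\square_1$ at height $\tfrac{j+q}{q(2q-1)}$. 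When $1\le j\le q-2$ this last height is strictly below $\tfrac1q$, so the trajectory passes into $\square_2$, crosses the roof of $\square_2$ once and the portal of $\square_2$ once, and re-enters the left edge of $\square_1$ precisely at position $j+1$; when $j=q-1$ the same computation instead deposits the trajectory on the vertex $(1,l_2)$, closing the saddle connection. Equivalently, one may phrase this as following a single orbit of the first-return interval exchange transformation to the transversal assembled from the relevant segments of the coordinate axes --- the point of view to be used for $\mathrm{bsc}_k$ in Subsection~\ref{ssec:bsc_k}. (Note that for $q=2$ there is no excursion into $\square_2$ at all, so the inductive structure only becomes visible for $q\ge 3$.)

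Finally I would check that none of the finitely many edge-crossing points along the way is a vertex: the positions $\tfrac{j}{q(2q-1)}$, the portal-of-$\square_1$ heights $\tfrac{j+q^2}{q(2q-1)}$, the $\square_2$-re-entry heights $\tfrac{j+1}{q(2q-1)}$, and the roof-crossing coordinates all avoid vertex coordinates because $\gcd(q,2q-1)=1$ makes the relevant linear congruences modulo $q(2q-1)$ unsolvable for $1\le j\le q-1$. Together with a short list of elementary inequalities confirming that each intermediate point lies in the interior of the portal or roof it is claimed to cross --- so that the surface identifications invoked are the right ones --- this shows that the trajectory is a closed saddle connection, which we name $\mathrm{bsc}_2'$.

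The step I expect to be the main obstacle is the bookkeeping in the inductive passage: for general $q$ one must keep straight exactly when an iterate leaves $\square_1$ through its portal versus when it drops below $\tfrac1q$ and must be continued into $\square_2$, and then recognize the re-entry point as position $j+1$. The arithmetic is elementary, but the case analysis is easy to get wrong, the more so because the smallest case $q=2$ is degenerate and gives little hint of the general pattern.
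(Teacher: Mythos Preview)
Your proposal is correct and follows essentially the same approach as the paper: both start the trajectory at $(1+\tfrac1q,0)$, cross $\square_3$ to reach $(0,\tfrac{1}{q(2q-1)})$, and then run an induction through $\square_1$ and $\square_2$ that advances the left-edge entry height from $\tfrac{j}{q(2q-1)}$ to $\tfrac{j+1}{q(2q-1)}$ until termination at the singularity $(1,\tfrac1q)$ when $j=q-1$. The only difference is bookkeeping---the paper indexes the loop by the right-edge height $\tfrac{n+q}{q(2q-1)}$ before the $\square_2$ excursion rather than your ``position $j$'' on the left edge---and your organization is arguably cleaner.
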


\begin{proof} We use the following notations:
\begin{itemize}
    \item $\xrightarrow{x}$ indicates the linear flow where the horizontal displacement is $x,$
    \item $\extoverset[6pt]{\text{portal}(\#)}{\sim}$ indicates that the flow hit a portal of $\square_\#,$ i.e., the $x$-coordinate is $1+\cdots+\frac{1}{q^{\#-1}}$ and the $y$-coordinate lies between $\frac{1}{q^\#}$ and $\frac{1}{q^{\#-1}},$ hence it is identified to the corresponding point on the $y$-axis. For $q=2,3,$ these points are marked in blue in Figure~\ref{fig:bsc2prime}.
    \item $\extoverset[6pt]{\text{roof}(\#)}{\sim}$ indicates that the flow hits the roof of $\square_{\#}$ which is identified to the base of $\square_{\#}$, hence we adjust the $y$-coordinate. For $q=2,3,$ these points are marked in red in Figure~\ref{fig:bsc2prime}.
\end{itemize}

We start from $\left(1+\frac{1}{q},0\right)$ and flow in the $\frac{q}{2q-1}$-direction to construct $\mathrm{bsc}_2'$. 

\begin{align*}
\left(1+\frac{1}{q},0\right) & \xrightarrow{\frac{1}{q^2}} \left(1+\frac{1}{q}+\frac{1}{q^2}, \frac{1}{q (2q-1)}\right) \extoverset[6pt]{\text{portal}(3)}{\sim} \left(0, \frac{1}{q (2q-1)}\right) \\
& \xrightarrow{1} \left(1, \frac{1+q^2}{q (2q-1)}\right)  \extoverset[6pt]{\text{portal}(1)}{\sim} \left(0, \frac{1+q^2}{q (2q-1)}\right) \extoverset[6pt]{\text{roof}(1)}{\sim} \left(1, \frac{1+q}{q (2q-1)}\right).
\end{align*}

All operations above apply to all $q \in \N \setminus \{1\}.$ Observe that $\frac{1+q}{q (2q-1)} = \frac{1}{q}$ if $q=2,$ and $\frac{1+q}{q (2q-1)} < \frac{1}{q}$ if $q > 2.$ That is, if $q=2,$ the linear flow in the $\frac{q}{2q-1}$-direction from $\left(1+\frac{1}{q},0\right)$ to $\left(1, \frac{1+q}{q (2q-1)}\right)$ is a saddle connection, $\mathrm{bsc}_2'$. % See left of Figure~\ref{fig:bsc2prime}. 
If $q > 2,$ we have $\frac{1+q}{q (2q-1)} < \frac{1}{q}.$ Hence we do not yet have a saddle connection, and we enter $\square_2$. See Figure \ref{fig:bsc2prime} where this point is marked yellow in the case of $q=3$ and black in the case of $q=2.$ 

Continuing from the last expression,

\begin{align*}
\left(1, \frac{1+q}{q (2q-1)}\right) & \xrightarrow{\frac{1}{q}} \left(1+\frac{1}{q},\frac{1+2q}{q (2q-1)}\right) \extoverset[6pt]{\text{roof}(2)}{\sim} \left(1+\frac{1}{q},\frac{2}{q (2q-1)}\right) \\
&\extoverset[6pt]{\text{portal}(2)}{\sim} \left(0,\frac{2}{q (2q-1)}\right) \xrightarrow{1} \left(1,\frac{2+q^2}{q (2q-1)}\right) \extoverset[6pt]{\text{portal}(1)}{\sim} \left(0,\frac{2+q^2}{q (2q-1)}\right) \\
&\xrightarrow{1} \left(1,\frac{2+2 q^2}{q (2q-1)}\right)  \extoverset[6pt]{\text{roof}(1)}{\sim} \left(1,\frac{2+q}{q (2q-1)}\right).
\end{align*}

Note that $\frac{2+q}{q (2q-1)} = \frac{1}{q}$ if $q=3,$ and $\frac{2+q}{q (2q-1)} < \frac{1}{q}$ if $q>3.$ Hence we have a saddle connection $\mathrm{bsc}_2'$ for $q=3$. In the induction that follows, we show that the number of times the trajectory hits portal(1) is $q-1.$

Now, we use the fact that $\frac{n+q}{q (2q-1)} = \frac{1}{q}$ if $q=n+1$ for any $n \in \N$. If $q>n+1$ (we have not iterated $n$ enough) then $\frac{n+q}{q (2q-1)} < \frac{1}{q}$ and the flow always re-enters $\square_2.$ That is, the linear flow has not yielded a saddle connection, and continuing, 

\begin{align*}
\left(1, \frac{n+q}{q (2q-1)}\right) & \xrightarrow{1/q} \left(1 + \frac{1}{q}, \frac{n+2 q}{q (2q-1)}\right) \extoverset[6pt]{\text{roof}(1)}{\sim} \left(1 + \frac{1}{q}, \frac{n+1}{q (2q-1)}\right) \\
& \extoverset[6pt]{\text{portal}(2)}{\sim} \left(0, \frac{n+1}{q (2q-1)}\right)  \xrightarrow{1} \left(1, \frac{n+1+q^2}{q (2q-1)}\right) \extoverset[6pt]{\text{portal}(1)}{\sim} \left(0, \frac{n+1+q^2}{q (2q-1)}\right) \\
& \xrightarrow{1} \left(1, \frac{n+1+2q^2}{q (2q-1)}\right)  \extoverset[6pt]{\text{roof}(1)}{\sim} \left(1, \frac{n+1+q}{q (2q-1)}\right),
\end{align*}

\noindent we see that we iterate $n$, and the process continues until $n+1=q.$

\end{proof}

In the following theorem, for every $q \in \N\setminus \{1\}$, we construct $\mathrm{bsc}_k^{\prime}$ for every integer $k>2$. The previous lemma serves as the base case for an induction proof that shows the existence of this saddle connection.

\begin{thm}\label{thm:bsck'} Given a geometric armadillo tail with parameter $r=\frac{1}{q},$ $q \in \N \setminus \{1\},$ there exists a saddle connection that we call $\mathrm{bsc}_k'$.\end{thm}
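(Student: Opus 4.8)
The plan is to prove, by induction on $k$ with the base case $k=2$ supplied by Lemma~\ref{lem:bsc2}, a strengthened assertion that pins down $\mathrm{bsc}_k'$ completely. Namely: $\mathrm{bsc}_k'$ is the saddle connection traced by the slope-$\tfrac{q}{2q-1}$ trajectory that issues from the bottom-right vertex $(s_k,0)$ of $\square_k$ (writing $s_k=1+\tfrac1q+\dots+\tfrac1{q^{k-1}}$); it lies in $\square_1\cup\dots\cup\square_{k+1}$; and it is a concatenation of $q-1$ ``cycles,'' where cycle $n$ ($1\le n\le q-1$) begins with the trajectory meeting the $y$-axis at height $\tfrac{n}{q^{k-1}(2q-1)}$, makes one pass through $\square_1$ (out its portal, then a bounce off its roof), and then descends through $\square_2,\square_3,\dots$ bouncing off each roof; for $n<q-1$ the descent leaves through the portal of $\square_k$ at height $\tfrac{n+1}{q^{k-1}(2q-1)}$, which opens cycle $n+1$, and for $n=q-1$ it terminates at the vertex $(s_{k-1},q^{-(k-1)})$, the top-left vertex of $\square_k$.

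The engine of the inductive step is the self-similarity of the armadillo tail. Let $\psi$ be the scaling map carrying $\square_j$ onto $\square_{j+1}$ — the homothety of ratio $1/q$ followed by the appropriate translation. It preserves the slope $\tfrac{q}{2q-1}$, it intertwines the roof--base identification of $\square_j$ with that of $\square_{j+1}$, and it intertwines the transition ``leave the right edge of $\square_j$ below its portal into $\square_{j+1}$'' with the same transition one square deeper. The one identification it fails to respect is that of the portal of $\square_j$ with a segment of the $y$-axis; this is the sole place where a $\psi$-translated trajectory can diverge from a genuine trajectory.

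Given this, here is how I would run the inductive step. By the inductive hypothesis, $\mathrm{bsc}_{k-1}'$ leaves $(s_{k-1},0)$, runs through $\square_k$ to its portal, and reappears on the $y$-axis at height $\tfrac{1}{q^{k-2}(2q-1)}$; applying $\psi$ reproduces verbatim the opening of $\mathrm{bsc}_k'$ (from $(s_k,0)$ through $\square_{k+1}$ back to the $y$-axis at height $\tfrac{1}{q^{k-1}(2q-1)}$). Thereafter the $\psi$-image of $\mathrm{bsc}_{k-1}'$ coincides with the true trajectory of $\mathrm{bsc}_k'$ except at the $q-1$ instants when $\mathrm{bsc}_{k-1}'$ returns to the $y$-axis. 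At each such instant the true trajectory of $\mathrm{bsc}_k'$ inserts one extra pass through $\square_1$ and then one pass through $\square_2$; the key computation is that entering $\square_2$ at height $\tfrac{n}{q^{k-1}(2q-1)}+\tfrac1{2q-1}$ and subtracting $\tfrac{q-1}{q(2q-1)}$ for the roof bounce in $\square_2$ leaves the trajectory at $\tfrac1q\bigl(\tfrac{n}{q^{k-2}(2q-1)}+\tfrac1{2q-1}\bigr)$, which is exactly $\psi$ of where $\mathrm{bsc}_{k-1}'$ sits after its own first pass through $\square_1$, so the trajectory rejoins the $\psi$-image of the rest of that cycle. Since $\mathrm{bsc}_{k-1}'$ has only finitely many returns to the $y$-axis and ends at a vertex, $\mathrm{bsc}_k'$ is a finite concatenation of straight segments meeting only at singular points — a saddle connection — and its final cycle ends at $\psi(s_{k-2},q^{-(k-2)})=(s_{k-1},q^{-(k-1)})$.

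The hard part will be the bookkeeping at these $q-1$ ``repair'' points, i.e.\ verifying that the inserted loop through $\square_1$ and $\square_2$ exactly restores the $\psi$-correspondence, and checking — from the geometric sequence $q^{-(j-1)}$ of roof heights and the explicit height increments — that in each cycle the descent bottoms out exactly at $\square_k$ (meeting its portal, not penetrating $\square_{k+1}$), so that the whole trajectory stays inside $\square_1\cup\dots\cup\square_{k+1}$. This multi-square descent is the genuinely new feature for $k\ge3$ and is why the theorem is more than the lemma. I also expect $q=2$ to need a separate check, since the inequalities distinguishing the circle-rotation regime from the roof-bounce regime in $\square_1$ degenerate when $\tfrac1q>\tfrac{q-1}{2q-1}$, and likewise $k=3$, where $\mathrm{bsc}_{k-1}'=\mathrm{bsc}_2'$ has its own special terminal behavior (its last cycle terminates during the pass through $\square_1$, before reaching $\square_2$).
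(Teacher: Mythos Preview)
Your self-similarity idea via the contraction $\psi$ is a genuinely different route from the paper's proof. The paper simply tracks the trajectory from $(s_k,0)$ by brute force, running a nested pair of inductions: an inner induction on the square index $l$ to push the flow through roof$(1)$, roof$(2)$, \dots, roof$(l+2)$ in turn, and then an outer induction (borrowed verbatim from Lemma~\ref{lem:bsc2}) on the cycle counter $n$ to advance from $\tfrac{n}{q^{k-1}(2q-1)}$ to $\tfrac{n+1}{q^{k-1}(2q-1)}$. There is no appeal to self-similarity; every height is written down explicitly.

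That said, there is a real gap in your plan as stated. You claim the $\psi$-image of $\mathrm{bsc}_{k-1}'$ coincides with $\mathrm{bsc}_k'$ except at $q-1$ break points, where $\mathrm{bsc}_k'$ merely \emph{inserts} extra segments. This is not so: in each cycle $n$ of $\mathrm{bsc}_{k-1}'$, the \emph{first} traversal of $\square_1$ (the one ending at portal$(1)$) maps under $\psi$ to a $\square_2$-segment entering at height $\tfrac{n}{q^{k-1}(2q-1)}$ and exiting through portal$(2)$. But the unique $\square_2$-segment in cycle $n$ of $\mathrm{bsc}_k'$ enters at height $\tfrac{n+q^{k-1}}{q^{k-1}(2q-1)}$ and bounces off the roof --- it is a different segment. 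So $\psi(\mathrm{bsc}_{k-1}')$ contains pieces that do not lie on $\mathrm{bsc}_k'$ at all, and the relationship is not ``insert'' but ``replace.'' (Relatedly: $\mathrm{bsc}_{k-1}'$ returns to the $y$-axis $2(q-1)$ times, not $q-1$ --- once at each cycle start and once after each portal$(1)$.)

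The fix is to apply $\psi$ only to the \emph{descent} portion $D_{k-1}^n$ of each cycle, i.e.\ from after roof$(1)$ through portal$(k-1)$. Your key computation already shows that $\mathrm{bsc}_k'$ after roof$(2)$ agrees with $\psi(\mathrm{bsc}_{k-1}')$ after roof$(1)$; so the correct recursion is $D_k^n = [\text{one } \square_2 \text{ roof-bounce}] \oplus \psi(D_{k-1}^n)$, and cycle $n$ of $\mathrm{bsc}_k'$ is the full $\square_1$ pass prepended to $D_k^n$. With that adjustment the induction on $k$ goes through cleanly and is arguably more transparent than the paper's coordinate-chasing.
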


\begin{proof} We use the same notations we did in the previous lemma.

\begin{align*}
\left(1+ \cdots + \frac{1}{q^{k-1}},0\right) & \xrightarrow{1/q^k} \left(1 + \cdots + \frac{1}{q^k},\frac{1}{q^{k-1} (2q-1)}\right) \\
&\extoverset[6pt]{\text{portal}(k+1)}{\sim} \left(0,\frac{1}{q^{k-1} (2q-1)}\right)  \xrightarrow{1} \left(1,\frac{1 + q^k}{q^{k-1} (2q-1)}\right) \\
& \xrightarrow{1} \left(1,\frac{1 + 2 q^k}{q^{k-1} (2q-1)}\right) \\
& \extoverset[6pt]{\text{roof}(1)}{\sim} \left(1,\frac{1 + q^{k-1}}{q^{k-1} (2q-1)}\right).\end{align*}

The last point above is a singularity if $q=2$ and $k=2,$ however, we assume $k > 2$ and continue.

\begin{align*}
\cdots & \xrightarrow{1/q} \left(1 + \frac{1}{q},\frac{1 + 2 q^{k-1}}{q^{k-1} (2q-1)}\right) \extoverset[6pt]{\text{roof}(2)}{\sim} \left(1 + \frac{1}{q},\frac{1 + q^{k-2}}{q^{k-1} (2q-1)}\right).\end{align*}

Again, the last expression is a singularity if $q=2$ and $k=3.$

\noindent \textbf{Induction hypothesis} For $l$ small enough, assume $$\left(1+ \cdots + \frac{1}{q^l},\frac{1 + q^{k-l-1}}{q^{k-1} (2q-1)}\right) = \left(1+ \cdots + \frac{1}{q^l}, \frac{1}{q^{l+1}}\right),$$ a singularity if $q=2.$ 

\noindent \textbf{Inductive step} We show that $$\left(1+ \cdots + \frac{1}{q^{l+1}},\frac{1 + q^{k-l-2}}{q^{k-1} (2q-1)}\right) = \left(1+ \cdots + \frac{1}{q^{l+1}},\frac{1}{q^{l+2}}\right)$$ if $q=2.$ 

For $q > 2,$ we then have

$$\begin{array}{rl}
 & \xrightarrow{1/q^{l+1}} \left(1+ \cdots + \dfrac{1}{q^{l+1}},\dfrac{1 + 2 q^{k-l-1}}{q^{k-1} (2q-1)}\right) \\
 & \extoverset[6pt]{\text{roof}(l+2)}{\sim} \left(1+ \cdots + \dfrac{1}{q^{l+1}},\dfrac{1 + q^{k-l-2}}{q^{k-1} (2q-1)}\right),\end{array}$$ and the last expression is a singularity if $q=2,$ and this proves our hypothesis.

Let $l=k-3,$ then the last expression becomes $\left(1+ \cdots + \frac{1}{q^{k-2}},\frac{1 + q}{q^{k-1} (2q-1)}\right),$ which is a singularity for $q=2.$ For $q >2,$ we have

$$\begin{array}{rl}
& \left(1+ \cdots + \dfrac{1}{q^{k-2}},\dfrac{1 + q}{q^{k-1} (2q-1)}\right) \xrightarrow{1/q^{k-1}} \left(1+ \cdots + \dfrac{1}{q^{k-1}},\dfrac{1 + 2 q}{q^{k-1} (2q-1)}\right) \\
& \extoverset[6pt]{\text{roof}(k)}{\sim} \left(1+ \cdots + \dfrac{1}{q^{k-1}},\dfrac{2}{q^{k-1} (2q-1)}\right) \extoverset[6pt]{\text{portal}(k)}{\sim} \left(0,\dfrac{2}{q^{k-1}(2q-1)}\right) \\
& \xrightarrow{1} \left(1,\dfrac{2+q^k}{q^{k-1}(2q-1)}\right) \extoverset[6pt]{\text{portal}(1)}{\sim} 
\left(1,\dfrac{2+2q^k}{q^{k-1}(2q-1)}\right) \extoverset[6pt]{\text{roof}(1)}{\sim} \left(1,\dfrac{2+q^{k-1}}{q^{k-1}(2q-1)}\right) \end{array}$$ which is a singularity if $q=3$ and $k=2.$

We claim that the rest follows from the induction technique used in Lemma~\ref{lem:bsc2}.
\end{proof}

\subsection{Constructing $\mathrm{bsc}_k$}\label{ssec:bsc_k} 

To finish the construction of $\mathrm{bsc}_k$, in lieu of tracking trajectories through portals and roofs, we will use an interval exchange transformation. If we take a section of the linear flow along the $x=0$ in the polygonal representation, we see an infinite interval exchange transformation. However, as we will see, the saddle connections in $\mathrm{bsc}_k$ never extend beyond $k+1$-squares, which means we will only need to use what is essentially a $2k$-IET.

To sample how this works, we will construct $\mathrm{bsc}_3$ using the map $T_3: [0,1)\setminus U \to [0,1) \setminus V$, where $U = \left(\dfrac{q^3 - 1}{q^2(2q-1)},\dfrac{q^4 + q - 1}{q^3(2q-1)}\right)$ and $V = \left(0,\dfrac{1}{q^3}\right)$: 

\begin{equation*}
T_3(x) =    \begin{cases}
            x + \dfrac{q}{2q-1},& x \in \left[0,\dfrac{q-1}{2q-1}\right)\\
            0, & x = \dfrac{q-1}{2q-1} \\
            x + \dfrac{2-q}{2q-1}, & x \in \left(\dfrac{q-1}{2q-1}, \dfrac{q^2-1}{q(2q-1)}\right]\\
            x + \dfrac{2-q^2}{q(2q-1)}, & x \in \left(\dfrac{q^2-1}{q(2q-1)},\dfrac{q^3-1}{q^2(2q-1)}\right]\\
            x + \dfrac{1-q^3}{q^2(2q-1)}, & x \in \left[\dfrac{q^4+q-1}{q^3(2q-1)},\dfrac{q^3+q-1}{q^2(2q-1)} \right)\\
            x + \dfrac{1-q^2}{q(2q-1)}, & x \in \left[\dfrac{q^3+q-1}{q^2(2q-1)},\dfrac{q^2+q-1}{q(2q-1)} \right)\\
            x + \dfrac{1-q}{2q-1}, & x \in \left[\dfrac{q^2+q-1}{q(2q-1)},1 \right).\\
            \end{cases}
\end{equation*}

Both $\mathrm{bsc}_3$ and $T_3$ depend on the parameter $q$. In Figure~\ref{fig:colored iet}, we give a visual description of the map $T_3$. Note that the black region corresponds to where $T_3$ is left undefined.

\begin{figure}[htbp]
    \centering
    \includegraphics[width=0.7\linewidth]{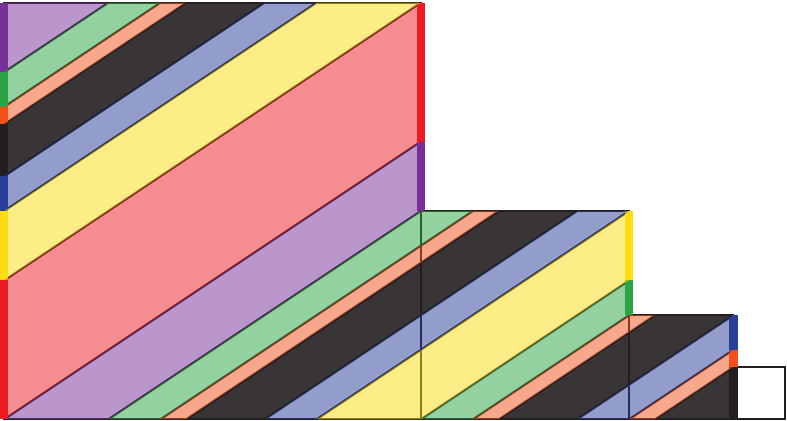}
    \caption{$6$-IET on $X_3$}
    \label{fig:colored iet}
\end{figure}

\begin{lem}\label{lem:bsc-base-case} Define $\mathrm{bsc}_{3} = \left(\mathrm{bsc}_2 + \frac{q-1}{q^{2}(2q-1)}\right) \oplus \mathrm{bsc}^{\prime}_{3}$. Then $\mathrm{bsc}_{3}$ is a concatenation of two saddle connections. 
\end{lem}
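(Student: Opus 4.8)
The plan is to show that $\mathrm{bsc}_3$, as defined by the formula $\mathrm{bsc}_3 = \mathrm{bsc}_2 + \frac{q-1}{q^2(2q-1)} \oplus \mathrm{bsc}_3'$, is literally a concatenation of \emph{two} saddle connections by identifying precisely which two they are and verifying that each is a single saddle connection. The saddle connection $\mathrm{bsc}_3'$ is already produced by Theorem~\ref{thm:bsck'}, so the only new content is (a) showing that $\mathrm{bsc}_2 + \frac{q-1}{q^2(2q-1)}$ — that is, $\mathrm{bsc}_2$ with its (two) constituent saddle connections shifted vertically by $\frac{q-1}{q^2(2q-1)}$ and reassembled along the linear flow of slope $\frac{q}{2q-1}$ — extends to a \emph{single} saddle connection in $\square_1\cup\square_2\cup\square_3$, and (b) checking that the endpoints match so that concatenation with $\mathrm{bsc}_3'$ produces a valid chain. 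Here I will use the interval-exchange perspective: $T_3$ is precisely the first-return map of the linear flow to $\{x=0\}$ restricted to the relevant interval, with the black region $U$ corresponding to trajectories that would enter $\square_4$ (and hence leave the regime where $T_3$ governs the flow), and $V$ the complementary image interval.

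First I would set up the correspondence between points on $\{x=0\}$ and the IET $T_3$ carefully, recording that the breakpoints of $T_3$ at $\frac{q-1}{2q-1}$, $\frac{q^2-1}{q(2q-1)}$, $\frac{q^3-1}{q^2(2q-1)}$ (and their images $\frac{q^2+q-1}{q(2q-1)}$, $\frac{q^3+q-1}{q^2(2q-1)}$, $\frac{q^4+q-1}{q^3(2q-1)}$) are exactly the $y$-coordinates where trajectories hit singular corners of $\square_1,\square_2,\square_3$ — this is where the data of Step~1/Step~2 of Theorem~\ref{thm:cyl1} and of Lemma~\ref{lem:bsc2}, Theorem~\ref{thm:bsck'} gets repackaged. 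Then I would trace the orbit of $T_3$ starting from the lower endpoint of the shifted family (the image under the vertical shift of the starting point of $\mathrm{bsc}_2$, i.e.\ the point on $\{x=0\}$ at height $\frac{1}{2q-1}+\frac{q-1}{q^2(2q-1)}$, which after simplification should be $\frac{q^2+q-1}{q^2(2q-1)}$ or similar), showing the orbit is finite — it visits each of the finitely many subintervals once — and terminates when it lands in $U$, i.e.\ when the flow is forced into $\square_3$ and onward, at which point it continues along $\mathrm{bsc}_3'$. The finiteness of the $T_3$-orbit is essentially a $\gcd$ / coprimality statement exactly as in the induction closing Lemma~\ref{lem:bsc2} and Theorem~\ref{thm:bsck'}, so I would cite "the same induction technique as in Lemma~\ref{lem:bsc2}" rather than redo it. Once the orbit is shown to be a finite chain of flow segments with no singular point in its interior, its closure is a single saddle connection; together with $\mathrm{bsc}_3'$ this gives the two-saddle-connection description, provided the terminal point of the first matches the initial point of $\mathrm{bsc}_3'$, which is a direct coordinate check using the explicit endpoints computed in Theorem~\ref{thm:bsck'}.

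The main obstacle I anticipate is bookkeeping rather than conceptual: one must be sure that shifting the two saddle connections comprising $\mathrm{bsc}_2$ by $\frac{q-1}{q^2(2q-1)}$, deleting the singular point, and re-flowing does not introduce an unexpected singular crossing in the interior (which would break it into more than one saddle connection) and does not fail to close up. Concretely, I would need to confirm that none of the intermediate $T_3$-iterates lands on a breakpoint of $T_3$ before the orbit is supposed to terminate — this is exactly the "no singularity in the interstitial interval" type of argument from Step~3 of Theorem~\ref{thm:cyl1}, and it reduces to checking that the relevant rationals $\frac{m}{q^j(2q-1)}$ avoid the finite singular set $\{\tfrac{1}{q},\tfrac{1}{q^2},\ldots,1\}$ along the $y$-axis except at the designated start and end. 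A secondary nuisance is the case split $q=2$ versus $q>2$ (and possibly $q=3$), inherited from the earlier lemmas, where the chain has different combinatorial length; I would handle small $q$ by the explicit IET picture (Figure~\ref{fig:colored iet}) and large $q$ by the induction, noting the pattern is uniform.
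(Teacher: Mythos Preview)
Your approach is essentially the paper's: use the IET $T_3$ to trace the relevant orbit and show it terminates at a singularity without passing through one in between. Two corrections to your bookkeeping. First, the starting point: $\mathrm{bsc}_2$ already carries a shift of $\frac{q-1}{q(2q-1)}$ on its $\mathrm{bsc}_1$ component, so after the further shift by $\frac{q-1}{q^2(2q-1)}$ the relevant height on $\{x=0\}$ is $\frac{2q^2-1}{q^2(2q-1)}$, not $\frac{q^2+q-1}{q^2(2q-1)}$. The paper in fact starts the trace from exactly this point --- it is the nonsingular endpoint of the shifted $\mathrm{bsc}_2'$ --- and only iterates $T_3$ over the \emph{new} segment rather than retracing all of shifted $\mathrm{bsc}_2$. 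Second, and more substantively, the termination: the orbit does \emph{not} end by landing in $U$. The region $U$ records trajectories that would enter $\square_4$, not $\square_3$, and the orbit under discussion stays in the domain of $T_3$ throughout; it terminates at the singular point $y=\tfrac{1}{q^2}$ on the $y$-axis, one step after reaching the left endpoint $\frac{q^3-1}{q^2(2q-1)}$ of $U$. Concretely, the paper proves by a short induction (uniform in $q$, no small-$q$ case split) that $T_3^{2n-4}\bigl(\tfrac{2q^2-1}{q^2(2q-1)}\bigr)=\tfrac{nq^2-1}{q^2(2q-1)}$ for $2\le n\le q$, and then one further application of $T_3$ lands at $\tfrac{1}{q^2}$. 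Your picture of the orbit ``falling into $U$ and continuing along $\mathrm{bsc}_3'$'' is thus not how the two pieces join; they meet at the singular vertex $(1+\tfrac{1}{q}+\tfrac{1}{q^2},\tfrac{1}{q^2})\sim(0,\tfrac{1}{q^2})$, which is also the unique singular crossing of the shifted $\mathrm{bsc}_2'$.
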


\begin{proof} Clearly, $\mathrm{bsc}^{\prime}_3$ is one of the two saddle connections. We show that $\mathrm{bsc}_2 + \frac{q-1}{q^{2}(2q-1)}$ yields a single saddle connection. 

Observe that $\mathrm{bsc}^{\prime}_2$ is one of the two saddle connections in $\mathrm{bsc}_2$. We can shift $\mathrm{bsc}^{\prime}_2$ vertically by $\frac{q-1}{q^2(2q-1)}$. When we take the closure of the vertical shift, we see endpoints corresponding to $\left(1+\frac{1}{q},\frac{q-1}{q^2(2q-1)}\right)$ and $\left(1,\frac{1}{q} + \frac{q-1}{q^2(2q-1)}\right) = \left(1,\frac{2q^2-1}{q^2(2q-1)}\right)$, neither of which are singular points. (See the construction of $\mathrm{bsc}_2^{\prime}$ in Lemma \ref{lem:bsc2}.) The segment does not cross over the singular point as we continuously push the segment vertically, however, we see that the segment intersects that singular point at exactly one point: $\left(1 + \frac{1}{q} + \frac{1}{q^2}, \frac{1}{q(2q-1)} + \frac{q-1}{q^2(2q-1)}\right) = \left(1 + \frac{1}{q} + \frac{1}{q^2}, \frac{1}{q^2}\right)$. 

Further note that $\left(1,\frac{2q^2-1}{q^2(2q-1)}\right)$ is identified to a point on the section $x=0$ since $\left(1,\frac{2q^2-1}{q^2(2q-1)}\right) \sim \left(0, \frac{2q^2-1}{q^2(2q-1)}\right)$. Similarly, the singular point $\left(1 + \frac{1}{q} + \frac{1}{q^2}, \frac{1}{q^2}\right) \sim \left(0,\frac{1}{q^2}\right)$. We will show that the linear flow connects $\left(0, \frac{2q^2-1}{q^2(2q-1)}\right)$ to $(0,\frac{1}{q^2})$ using $T_3$, and in doing so, we will see that all iterates of the map avoid the singular point. 

Observe that when $q = 2$, $T_3\left(\frac{2q^2-1}{q^2(2q-1)}\right) = \frac{1}{q^2}$, and we are done. For general $q$, we make the following observation. 

\begin{cl} For any $q \in \N \setminus\{0\}$, any $2 \leq n \leq q$, $$ T_3^{2n-4}\left(\dfrac{2q^2-1}{q^2(2q-1)}\right) = \dfrac{nq^2 - 1}{q^2(2q-1)}.$$
\end{cl}

\noindent The proof of the claim follows from induction, where $q=2$ is our base case. Indeed, 

\begin{equation*}
T_3^{2n-4}\left(\dfrac{2q^2-1}{q^2(2q-1)}\right) = T_3^2 \circ T_3^{2(n-1)-4}\left(\dfrac{2q^2-1}{q^2(2q-1)}\right) = T_3^2\left(\dfrac{(n-1)q^2-1}{q^2(2q-1)}\right)
\end{equation*}

\noindent by the inductive hypothesis. Observe that for any $n \leq q$, $$\dfrac{(n-1)q^2-1}{q^2(2q-1)} \in \left[0, \frac{q-1}{2q-1}\right)$$ since $(n-1)q^2 - 1 < q^3-q^2$ for any $n \leq q$. Consequently,  

\begin{equation*}
T_3^2\left(\dfrac{(n-1)q^2-1}{q^2(2q-1)}\right) = T_3\left( \dfrac{(n-1)q^2-1}{q^2(2q-1)} + \frac{q}{2q-1} \right) = T_3\left( \dfrac{q^3+(n-1)q^2-1}{q^2(2q-1)}\right).
\end{equation*}

\noindent Now, observe that for any $2 \leq n \leq q$, $$\dfrac{q^3+(n-1)q^2-1}{q^2(2q-1)} \in \left[\dfrac{q^2+q-1}{q(2q-1)},1 \right).$$ Indeed, $$\dfrac{q^2 + q - 1}{q(2q-1)} \leq \dfrac{q^3+(n-1)q^2-1}{q^2(2q-1)} < 1$$ since, first, $1 \leq (n-2)q^2 + q$ for any $n \geq 2$, and second, $nq^2 < 2q^3 + 1$ for $n \leq q$.

Consequently, 

\begin{equation*}
T_3\left( \dfrac{q^3+(n-1)q^2-1}{q^2(2q-1)}\right) = \dfrac{q^3+(n-1)q^2-1}{q^2(2q-1)} + \dfrac{1-q}{2q-1} = \dfrac{nq^2 - 1}{q^2(2q-1)},
\end{equation*}
\noindent as desired. This completes the proof of the claim. 

To complete the proof of the Lemma, take $n=q$ and we have $$T_3^{2q-4}\left(\dfrac{2q^2-1}{q^2(2q-1)}\right) = \dfrac{q^3 - 1}{q^2(2q-1)}$$ and since $\dfrac{q^3 - 1}{q^2(2q-1)} \in \left(\dfrac{q^2 - 1}{q(2q-1)},\dfrac{q^3 - 1}{q^2(2q-1)}\right]$, $$T_3\left(\dfrac{q^3 - 1}{q^2(2q-1)}\right) = \dfrac{q^3 - 1}{q^2(2q-1)} + \dfrac{2-q^2}{q(2q-1)} = \frac{1}{q^2},$$ which completes the proof. 
\end{proof}

An analogous proof works for $\mathrm{bsc}_k$. However, we will need to use the map $T_k: [0,1) \setminus U \to [0,1)\setminus V$, where $U = \left(\dfrac{q^k - 1}{q^{k-1}(2q-1)},\dfrac{q^{k+1} + q - 1}{q^{k}(2q-1)}\right)$ and $V = \left(0, \dfrac{1}{q^k}\right)$ defined by: 

\begin{equation*}
T_k(x) =    \begin{cases}
            x + \dfrac{q}{2q-1},& x \in \left[0,\dfrac{q-1}{2q-1}\right)\\
            0, & x = \dfrac{q-1}{2q-1} \\
            x + \dfrac{2-q}{2q-1}, & x \in \left(\dfrac{q-1}{2q-1}, \dfrac{q^2-1}{q(2q-1)}\right]\\
            x + \dfrac{2-q^2}{q(2q-1)}, & x \in \left(\dfrac{q^2-1}{q(2q-1)},\dfrac{q^3-1}{q^2(2q-1)}\right]\\
            \vdots & \\ 
            x + \dfrac{2-q^{k-i+1}}{q^{k-i}(2q-1)}, & x \in \left( \dfrac{q^{k-i+1} - 1}{q^{k-i}(2q-1)}, \dfrac{q^{k-i + 2} - 1}{q^{k-i+1}(2q-1)} \right]\\
            \vdots & \\
            x + \dfrac{2-q^{k-1}}{q^{k-2}(2q-1)}, & x \in \left( \dfrac{q^{k-1}-1}{q^{k-2}(2q-1)}, \dfrac{q^{k}-1}{q^{k-1}(2q-1)} \right] \\
            x + \dfrac{1-q^k}{q^{k-1}(2q-1)}, & x \in \left[ \dfrac{q^{k+1}+q-1}{q^k(2q-1)},  \dfrac{q^{k}+q-1}{q^{k-1}(2q-1)} \right) \\
            \vdots & \\
            x + \dfrac{1-q^{k-j+1}}{q^{k-j}(2q-1)}, & x \in \left[ \frac{q^{k-j+2}+q-1}{q^{k-j+1}(2q-1)}, \frac{q^{k-j+1}+q-1}{q^{k-j}(2q-1)} \right)\\
            \vdots & \\
            x + \dfrac{1-q^3}{q^2(2q-1)}, & x \in \left[\dfrac{q^4+q-1}{q^3(2q-1)},\dfrac{q^3+q-1}{q^2(2q-1)} \right)\\
            x + \dfrac{1-q^2}{q(2q-1)}, & x \in \left[\dfrac{q^3+q-1}{q^2(2q-1)},\dfrac{q^2+q-1}{q(2q-1)} \right)\\
            x + \dfrac{1-q}{2q-1}, & x \in \left[\dfrac{q^2+q-1}{q(2q-1)},1 \right),\\
            \end{cases}
\end{equation*}

\noindent where $i, j \in \N$, $0 < j \leq k$ and $1 < i \leq k$. As with $T_3$ and $\mathrm{bsc}_3$, both $T_k$ and $\mathrm{bsc}_k$ depend on the parameter $q$.

\begin{thm}\label{thm:bsc} For $k > 2$, define $\mathrm{bsc}_{k} = \left(\mathrm{bsc}_{k-1} + \frac{q-1}{q^{k-1}(2q-1)}\right) \oplus \mathrm{bsc}^{\prime}_{k}$. Then $\mathrm{bsc}_{k}$ is a concatenation of two saddle connections.  
\end{thm}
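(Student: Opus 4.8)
The statement is the inductive generalization of Lemma~\ref{lem:bsc-base-case}, so I would mirror that proof structure with the family of maps $T_k$ in place of $T_3$. The claim $\mathrm{bsc}_k$ is a concatenation of two saddle connections amounts to two things: $\mathrm{bsc}_k'$ is one saddle connection (this is Theorem~\ref{thm:bsck'}), and $\mathrm{bsc}_{k-1} + \frac{q-1}{q^{k-1}(2q-1)}$ closes up to a single saddle connection. For the latter, the key observation is that $\mathrm{bsc}_{k-1}$ is itself (by the induction hypothesis of \emph{this} theorem, with base case Theorem~\ref{thm:cyl1} and Lemma~\ref{lem:bsc-base-case}) a concatenation of two saddle connections, one of which is $\mathrm{bsc}_{k-1}'$. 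Shifting $\mathrm{bsc}_{k-1}$ vertically by $\frac{q-1}{q^{k-1}(2q-1)}$ therefore produces two arcs that each fail to be a saddle connection only at their endpoints on the section $x=0$; I need to show that flowing in the $\frac{q}{2q-1}$-direction glues these two arcs together at the singular point, and that the gluing flow passes through the singularity exactly once (so the total is one saddle connection, not a longer periodic object or something hitting the singularity en route).

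First I would identify, exactly as in Lemma~\ref{lem:bsc-base-case}, the two relevant endpoints after the shift: the top endpoint of $\mathrm{bsc}_{k-1}$ shifted up lands at a point of the form $\left(1,\frac{\ast}{q^{k-1}(2q-1)}\right)\sim\left(0,\frac{\ast}{q^{k-1}(2q-1)}\right)$, and the singular point that the shifted segment touches (the analogue of $\left(1+\frac1q+\frac1{q^2},\frac1{q^2}\right)$) is identified to $\left(0,\frac{1}{q^k}\right)$. I would then need to compute the starting $y$-coordinate on $x=0$: using the explicit description of $\mathrm{bsc}_{k-1}$ coming from its own construction, the shifted top endpoint should be $\left(0,\frac{2q^{k-1}-1}{q^{k-1}(2q-1)}\right)$ — I would verify this by checking that the "top" boundary of the $(k-1)$st cylinder meets $x=0$ at height $\frac{1}{q^{k-2}}$ and that adding the skew-width $\frac{q-1}{q^{k-1}(2q-1)}$ gives the stated value. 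The heart of the argument is then the analogue of the Claim in Lemma~\ref{lem:bsc-base-case}: I would prove by induction on $n$ that
\begin{equation*}
T_k^{2n-4}\!\left(\frac{2q^{k-1}-1}{q^{k-1}(2q-1)}\right) = \frac{nq^{k-1}-1}{q^{k-1}(2q-1)}, \qquad 2\le n\le q,
\end{equation*}
using that for $n\le q$ the point $\frac{(n-1)q^{k-1}-1}{q^{k-1}(2q-1)}$ lies in the first branch $\left[0,\frac{q-1}{2q-1}\right)$ of $T_k$ (because $(n-1)q^{k-1}-1<q^k-q^{k-1}$), then lands after one application in the last branch $\left[\frac{q^2+q-1}{q(2q-1)},1\right)$ (the same pair of inequalities $1\le(n-2)q^{k-1}+q$ and $nq^{k-1}<2q^k+1$ as in the $k=3$ case, with $q^2$ replaced by $q^{k-1}$), and the second application returns $\frac{nq^{k-1}-1}{q^{k-1}(2q-1)}$. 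Taking $n=q$ gives $\frac{q^k-1}{q^{k-1}(2q-1)}$, which lies in the branch $\left(\frac{q^{k-1}-1}{q^{k-2}(2q-1)},\frac{q^k-1}{q^{k-1}(2q-1)}\right]$, and one more application of $T_k$ (adding $\frac{2-q^{k-1}}{q^{k-2}(2q-1)}$) yields exactly $\frac{1}{q^k}$, as needed.

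Two points need care beyond transcribing the $k=3$ argument. The first is the \emph{interval arithmetic for the general branches of $T_k$}: I must confirm that every iterate in the chain above stays off the gap $U=\left(\frac{q^k-1}{q^{k-1}(2q-1)},\frac{q^{k+1}+q-1}{q^k(2q-1)}\right)$ and never equals the fixed point $\frac{q-1}{2q-1}$ of the second branch, so that the circle-rotation-style dynamics is genuinely a valid section of the linear flow (i.e.\ the suspended trajectory never re-enters $\square_{k+1}$ or beyond and never hits the singularity prematurely). Since all the relevant $y$-values have the form $\frac{mq^{k-1}-1}{q^{k-1}(2q-1)}$ with $2\le m\le q$, and these are sandwiched strictly between $\frac{q-1}{2q-1}$ and $\frac{q^k-1}{q^{k-1}(2q-1)}$, this should reduce to a short check. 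The second is making the induction on $k$ itself airtight: the statement "$\mathrm{bsc}_k$ is a concatenation of two saddle connections" is what I am proving, but the argument just sketched uses that $\mathrm{bsc}_{k-1}$ already is one — so the induction is on $k$, with base case $\mathrm{bsc}_3$ (Lemma~\ref{lem:bsc-base-case}), and at each stage I also need the structural fact, visible from the recursion $\mathrm{bsc}_{k-1} = \mathrm{bsc}_{k-2}+\frac{q-1}{q^{k-2}(2q-1)}\oplus \mathrm{bsc}_{k-1}'$, that the "upper" saddle connection of $\mathrm{bsc}_{k-1}$ meets $x=0$ at height $\frac{1}{q^{k-2}}$; I would record this as a separate bookkeeping observation carried along the induction. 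I expect the main obstacle to be precisely this bookkeeping — keeping straight which of the two saddle connections in $\mathrm{bsc}_{k-1}$ gets shifted, where its endpoints are, and that the generic middle branches of $T_k$ (indexed by $i$ and $j$) are never invoked in the closing-up computation so that the $k=3$ calculation really does generalize verbatim. Once that is in place, the computation itself is a routine induction, and I would end by noting, as the paper does for $k=3$, that since the choice of $\varepsilon$-fiber is immaterial the whole shifted strip glues to $\mathrm{bsc}_k'$ along the singular point, giving the claimed concatenation of two saddle connections.
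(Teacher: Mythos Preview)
Your overall plan—mirror Lemma~\ref{lem:bsc-base-case} with $T_k$ in place of $T_3$—is exactly what the paper does, but your execution has a genuine gap that would cause the argument to fail for $k>3$.

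The error is in the starting point and length of the $T_k$-orbit. You claim the shifted endpoint on $x=0$ is $\frac{2q^{k-1}-1}{q^{k-1}(2q-1)}$, but your own verification step contradicts this: $\frac{1}{q^{k-2}}+\frac{q-1}{q^{k-1}(2q-1)}=\frac{q(2q-1)+q-1}{q^{k-1}(2q-1)}=\frac{2q^2-1}{q^{k-1}(2q-1)}$, with numerator $2q^2-1$ independent of $k$. (Your target singularity is also off by one: the shifted $\mathrm{bsc}_{k-1}'$ meets the singularity at $\bigl(1+\cdots+\tfrac{1}{q^{k-1}},\tfrac{1}{q^{k-1}}\bigr)\sim\bigl(0,\tfrac{1}{q^{k-1}}\bigr)$, not $\tfrac{1}{q^k}$.) So the orbit you must trace runs from $\frac{2q^2-1}{q^{k-1}(2q-1)}$ to $\frac{1}{q^{k-1}}$, and this is \emph{much} longer than in the $k=3$ case: it takes $(k-3)(2q-2)+2q-3$ iterates, not $2q-3$.

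Consequently your assertion that ``the generic middle branches of $T_k$ are never invoked'' is false and is precisely where the argument breaks. The paper's Claim carries an extra index $j\le k$: one shows $T_k^{2n-4}\bigl(\tfrac{2q^{j-1}-1}{q^{k-1}(2q-1)}\bigr)=\tfrac{nq^{j-1}-1}{q^{k-1}(2q-1)}$ for $2\le n\le q$, and then two further iterates (the second of which lands in the branch $\bigl[\tfrac{q^{k-j+1}+q-1}{q^{k-j}(2q-1)},\tfrac{q^{k-j}+q-1}{q^{k-j-1}(2q-1)}\bigr)$ and uses the shift $\tfrac{1-q^{k-j}}{q^{k-j-1}(2q-1)}$) send $\tfrac{q^j-1}{q^{k-1}(2q-1)}$ to $\tfrac{2q^j-1}{q^{k-1}(2q-1)}$. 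Chaining $j=2,3,\ldots,k-1$ walks the orbit from $\frac{2q^2-1}{q^{k-1}(2q-1)}$ up to $\frac{2q^{k-1}-1}{q^{k-1}(2q-1)}$; only \emph{then} does your short computation (the $j=k$ case of the Claim plus one application of the branch with shift $\tfrac{2-q^{k-1}}{q^{k-2}(2q-1)}$) finish the job at $\frac{1}{q^{k-1}}$. The interval-membership checks genuinely involve the intermediate branches indexed by $j$, so the $k=3$ calculation does not generalize verbatim.
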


\begin{proof} Clearly, $\mathrm{bsc}^{\prime}_k$ is one of the two saddle connections. We show that $\mathrm{bsc}_{k-1} + \frac{q-1}{q^{k-1}(2q-1)}$ yields a single saddle connection. 

Analogous to the previous lemma, observe that $\mathrm{bsc}^{\prime}_{k-1}$ is one of the two saddle connections in $\mathrm{bsc}_{k-1}$. We can shift $\mathrm{bsc}^{\prime}_{k-1}$ vertically by $\frac{q-1}{q^{k-1}(2q-1)}$. When we take the closure of the vertical shift, we see endpoints corresponding to $$\left(1+\frac{1}{q} + \cdots + \frac{1}{q^{k-1}},\frac{q-1}{q^{k-1}(2q-1)}\right)$$ and $$\left(1 + \frac{1}{q} + \cdots + \frac{1}{q^{k-2}},\frac{1}{q^{k-1}} + \frac{q-1}{q^{k-1}(2q-1)}\right) = \left(1 + \frac{1}{q} + \cdots + \frac{1}{q^{k-2}},\frac{2q^{2}-1}{q^{k-1}(2q-1)}\right),$$ neither of which are singular points. (See the construction of $\mathrm{bsc}_{k}^{\prime}$ in Theorem~\ref{thm:bsck'}.) The segment does not cross over the singular point as we continuously push the segment vertically, however, we see that the segment intersects that singular point at exactly one point: $$\left(1 + \frac{1}{q} + \cdots + \frac{1}{q^{k-1}}, \frac{1}{q^{k-2}(2q-1)} + \frac{q-1}{q^{k-1}(2q-1)}\right) = \left(1 + \frac{1}{q} + \cdots + \frac{1}{q^{k-1}}, \frac{1}{q^{k-1}}\right).$$ 

Further note that $\left(1 + \frac{1}{q} + \cdots + \frac{1}{q^{k-2}},\frac{2q^2-1}{q^{k-1}(2q-1)}\right)$ is identified to a point on the section $x=0$ since $$\left(1 + \frac{1}{q} + \cdots + \frac{1}{q^{k-2}},\frac{2q^2-1}{q^{k-1}(2q-1)}\right) \sim \left(0, \frac{2q^2-1}{q^{k-1}(2q-1)}\right).$$ Similarly, the singular point $\left(1 + \frac{1}{q} + \cdots + \frac{1}{q^{k-1}}, \frac{1}{q^{k-1}}\right) \sim \left(0,\frac{1}{q^{k-1}}\right)$. We will show that the linear flow connects $\left(0, \frac{2q^2-1}{q^{k-1}(2q-1)}\right)$ to $(0,\frac{1}{q^{k-1}})$ using $T_k$, and in doing so, we will see that all iterates of the map avoid the singular point. 

\begin{cl} For any $q \in \N \setminus\{0\}$, any $2 \leq n \leq q$, and any $j \leq k$, $$ T_k^{2n-4}\left(\dfrac{2q^{j-1}-1}{q^{k-1}(2q-1)}\right) = \dfrac{nq^{j-1} - 1}{q^{k-1}(2q-1)}.$$
\end{cl}

\noindent The proof of the claim follows from induction, where $q=2$ is our (trivial) base case. Indeed, 

\begin{equation*}
T_k^{2n-4}\left(\dfrac{2q^{j-1}-1}{q^{k-1}(2q-1)}\right) = T_k^2 \circ T_k^{2(n-1)-4}\left(\dfrac{2q^{j-1}-1}{q^{k-1}(2q-1)}\right) = T_k^2\left(\dfrac{(n-1)q^{j-1}-1}{q^{k-1}(2q-1)}\right)
\end{equation*}

\noindent by the inductive hypothesis. Observe that for any $j \leq k$ and $n \leq q$, $$\dfrac{(n-1)q^{j-1}-1}{q^{k-1}(2q-1)} \in \left[0, \frac{q-1}{2q-1}\right)$$ since $(n-1)q^{j-1} - 1 < q^k-q^{k-1}$ for any $j \leq k$ and $n \leq q$. Consequently,  

\begin{equation*}
T_k^2\left(\dfrac{(n-1)q^{j-1}-1}{q^{k-1}(2q-1)}\right) = T_k\left(\dfrac{(n-1)q^{j-1}-1}{q^{k-1}(2q-1)} + \frac{q}{2q-1} \right) = T_k\left( \dfrac{q^k + (n-1)q^{j-1}-1}{q^{k-1}(2q-1)}\right).
\end{equation*}

\noindent Now, observe that for any $2 \leq n \leq q$, $$\dfrac{q^k+(n-1)q^{j-1}-1}{q^{k-1}(2q-1)} \in \left[\dfrac{q^{k-j+2}+q-1}{q^{k-j+1}(2q-1)}, \dfrac{q^{k-j+1} + q - 1}{q^{k-j}(2q-1)} \right).$$ Indeed, $$\dfrac{q^{k-j+2}+q-1}{q^{k-j+1}(2q-1)} \leq \dfrac{q^k+(n-1)q^{j-1}-1}{q^{k-1}(2q-1)} < \dfrac{q^{k-j+1} + q - 1}{q^{k-j}(2q-1)}$$ since, first, $q^k + q^{j-1} - 1 \leq q^k+(n-1)q^{j-1}-1$ for any $n \geq 2$, and second, $$q^k+(n-1)q^{j-1}-1 < q^k + q^j - 1 $$ for $n \leq q$.

Consequently, 

\begin{equation*}
T_k\left( \dfrac{q^k + (n-1)q^{j-1}-1}{q^{k-1}(2q-1)}\right) = \dfrac{q^k + (n-1)q^{j-1}-1}{q^{k-1}(2q-1)} + \dfrac{1-q^{k-j+1}}{q^{k-j}(2q-1)} = \dfrac{n q^{j-1} - 1}{q^{k-1}(2q-1)},
\end{equation*}
\noindent as desired. This completes the proof of the claim. 

Now, take $n=q$ and we have $$T_k^{2q-4}\left(\dfrac{2q^2-1}{q^{k-1}(2q-1)}\right) = \dfrac{q^3 - 1}{q^{k-1}(2q-1)}.$$ Observe that $\dfrac{q^j - 1}{q^{k-1}(2q-1)} \in \left[0,\dfrac{q - 1}{2q-1}\right)$ since $q^j -1 < q^k - 1$ for any $j \leq k$. Apply $T_k$ again, and we see $$T_k\left(\dfrac{q^{j-1} - 1}{q^{k-1}(2q-1)}\right) = \dfrac{q^{j-1} - 1}{q^{k-1}(2q-1)} + \dfrac{q}{2q-1} = \dfrac{q^k + q^j - 1}{q^{k-1}(2q-1)}.$$ Iterating again, we have that $$\dfrac{q^k + q^j - 1}{q^{k-1}(2q-1)} \in \left[ \dfrac{q^{k-j+1}+q-1}{q^{k-j}(2q-1)}, \dfrac{q^{k-j}+q-1}{q^{k-j-1}(2q-1)} \right)$$ since $\dfrac{q^k + q^j - 1}{q^{k-1}(2q-1)} = \dfrac{q^{k-j+1}+q-1}{q^{k-j}(2q-1)}$ and consequently $$T_k\left(\dfrac{q^k + q^j - 1}{q^{k-1}(2q-1)}\right) = \dfrac{q^k + q^j - 1}{q^{k-1}(2q-1)} + \dfrac{1-q^{k-j}}{q^{k-j-1}(2q-1)} = \dfrac{2q^j - 1}{q^{k-1}(2q-1)}.$$ 

This provides a road map which will complete the proof. We have: 

\begin{align*}
T_k^{2q-2}\left(\dfrac{2q^{j-1} - 1}{q^{k-1}(2q-1)}\right) &= T_k^2 \circ T_k^{2q-4}\left(\dfrac{2q^{j-1} - 1}{q^{k-1}(2q-1)}\right) \\
&= T_k^2\left( \dfrac{q^{j} - 1}{q^{k-1}(2q-1)} \right) \\
&= \dfrac{2q^{j} - 1}{q^{k-1}(2q-1)}. \\
\end{align*}

\noindent Consequently, 

\begin{equation*}
T_k^{(k-3)(2q-2)}\left(\dfrac{2q^{2} - 1}{q^{k-1}(2q-1)}\right) = \dfrac{2q^{k-1} - 1}{q^{k-1}(2q-1)}
\end{equation*}

\noindent and 

\begin{equation*}
T_k^{2q-4}\left(\dfrac{2q^{k-1} - 1}{q^{k-1}(2q-1)}\right) = \dfrac{q^k - 1}{q^{k-1}(2q-1)}.
\end{equation*}

\noindent We apply $T_k$ one last time, observing that $$\dfrac{q^k - 1}{q^{k-1}(2q-1)} \in \left( \dfrac{q^{k-1}-1}{q^{k-2}(2q-1)}, \dfrac{q^{k}-1}{q^{k-1}(2q-1)} \right],$$ and we see 

\begin{equation*}
T_k\left(\dfrac{q^k - 1}{q^{k-1}(2q-1)}\right) = \dfrac{q^k - 1}{q^{k-1}(2q-1)} + \dfrac{2-q^{k-1}}{q^{k-2}(2q-1)} = \dfrac{1}{q^{k-1}},
\end{equation*}

\noindent as desired. In summary, 

\begin{equation*}
T_k^{(k-3)(2q-2)+2q-3}\left(\dfrac{2q^{2} - 1}{q^{k-1}(2q-1)}\right) = \dfrac{1}{q^{k-1}}.
\end{equation*}
\end{proof}

We record the intersections of $\mathrm{bsc}_k$ with the section $x=0$ for arbitrary $k$, using the construction of the saddle connections above. 

\begin{thm}\label{thm:bsck} The saddle connections $\mathrm{bsc}_k$ intersects $\{(0,y) : 0 < y < 1\}$ at points where $y$ is an element of

$$\begin{array}{rl}
     & \left\{\dfrac{1}{2q-1}, \cdots, \widehat{\dfrac{q}{2q-1}}, \cdots, \dfrac{2q-2}{2q-1}\right\} + \sum\limits_{i=1}^{k-1} \dfrac{q-1}{q^i(2q-1)}\\
     \bigcup & \left\{\dfrac{i}{q(2q-1)}, \dfrac{i+q^2}{q(2q-1)}\right\}_{i=1}^{q-1} + \sum\limits_{i=2}^{k-1} \dfrac{q-1}{q^i(2q-1)}\\
     & \vdots \\
     \bigcup & \left\{\dfrac{i}{q^{j-1}(2q-1)}, \dfrac{i+q^j}{q^{j-1}(2q-1)}\right\}_{i=1}^{q-1} + \sum\limits_{i=j}^{k-1} \dfrac{q-1}{q^i(2q-1)}\\
     & \vdots \\
     \bigcup & \left\{\dfrac{i}{q^{k-1}(2q-1)}, \dfrac{i+q^k}{q^{k-1}(2q-1)}\right\}_{i=1}^{q-1}
     \end{array}$$ where ``set $+$ number'' is a set where the number is added to every element in the set.
\end{thm}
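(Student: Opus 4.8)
The plan is to assemble the intersection set for $\mathrm{bsc}_k$ inductively, tracking how the recursion $\mathrm{bsc}_k = \mathrm{bsc}_{k-1} + \frac{q-1}{q^{k-1}(2q-1)} \oplus \mathrm{bsc}_k^{\prime}$ acts on the set of $y$-intercepts along the section $x = 0$. The base cases are already essentially in hand: for $k = 1$, the set of intersections is $\left\{\frac{i}{2q-1}\right\}_{i=1,\neq q}^{2q-2}$, which is exactly the first family with the empty sum $\sum_{i=1}^{0}$; for $k = 2$ one reads off from the construction of $\mathrm{bsc}_2^{\prime}$ in Lemma~\ref{lem:bsc2} that the trajectory hits $x=0$ at $\left\{\frac{i}{q(2q-1)}\right\}_{i=1}^{q-1}$ (the points of the form $\left(0,\frac{n}{q(2q-1)}\right)$) and at $\left\{\frac{i+q^2}{q(2q-1)}\right\}_{i=1}^{q-1}$ (the points of the form $\left(0,\frac{n+q^2}{q(2q-1)}\right)$ appearing right before the trajectory hits a roof of $\square_1$), giving exactly the $j=2$ family.

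The inductive step splits the set for $\mathrm{bsc}_k$ into the two pieces prescribed by the $\oplus$: the contribution of $\mathrm{bsc}_{k-1} + \frac{q-1}{q^{k-1}(2q-1)}$ and the contribution of $\mathrm{bsc}_k^{\prime}$. For the first piece, I would invoke the inductive hypothesis on $\mathrm{bsc}_{k-1}$: its intersection set is the union of families indexed by $j = 1, \ldots, k-1$, where the $j$th family carries the tail sum $\sum_{i=j}^{k-2}\frac{q-1}{q^i(2q-1)}$ (with the first family using $j=1$ and the empty sum). Adding the global vertical shift $\frac{q-1}{q^{k-1}(2q-1)}$ replaces each tail sum $\sum_{i=j}^{k-2}$ by $\sum_{i=j}^{k-1}$, which is exactly what the target formula demands for the families $j = 1, \ldots, k-1$ in $\mathrm{bsc}_k$. (Here one must be slightly careful that the vertical shift is applied to the full closure of $\mathrm{bsc}_{k-1}$, including the part that is $\mathrm{bsc}_{k-1}^{\prime}$; the shift commutes with taking intersections with $x=0$ away from the singular point, and Theorem~\ref{thm:bsc} guarantees the shifted segment only touches the singularity once, so no intercept is lost.) For the second piece, I would extract from the construction of $\mathrm{bsc}_k^{\prime}$ in Theorem~\ref{thm:bsck'} the list of $x=0$ crossings of that single new saddle connection: following the explicit flow, the crossings are at $\left\{\frac{i}{q^{k-1}(2q-1)}\right\}_{i=1}^{q-1}$ (the "portal" returns, where the $y$-coordinate starts at $\frac{1}{q^{k-1}(2q-1)}$ and increases in steps governed by the roof identifications) together with $\left\{\frac{i+q^k}{q^{k-1}(2q-1)}\right\}_{i=1}^{q-1}$ (the crossings of the form $\left(0,\frac{m+q^k}{q^{k-1}(2q-1)}\right)$ that occur immediately before the trajectory descends through a roof of $\square_1$). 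This is precisely the last family in the stated union, which carries the empty tail sum $\sum_{i=k}^{k-1}$.

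The main obstacle, and the place to spend the most care, is the bookkeeping in that second piece: extracting from the (somewhat involved) inductive flow in the proof of Theorem~\ref{thm:bsck'} a clean description of \emph{all} intersections of $\mathrm{bsc}_k^{\prime}$ with $x=0$, rather than just the endpoints. One has to verify that the numerators that appear are exactly $\{1, \ldots, q-1\}$ and $\{q^k+1, \ldots, q^k+q-1\}$ over the common denominator $q^{k-1}(2q-1)$, with no extras and no omissions, and in particular that the trajectory of $\mathrm{bsc}_k^{\prime}$ genuinely stays within $k$ squares so that no further $x=0$ crossings at deeper levels intervene. I would handle this by re-running the flow computation from Theorem~\ref{thm:bsck'} but this time recording every instant the $x$-coordinate returns to an integer-plus-$0$ (i.e., every portal identification back to the $y$-axis), organizing the two "phases" of the trajectory — the initial descent from the deep square back to $\square_1$, and the subsequent long run inside $\square_1 \cup \square_2$ mirroring the $\mathrm{bsc}_2^{\prime}$ dynamics — and matching each phase against one of the two index sets $\{1,\dots,q-1\}$. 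Once these intercept lists are pinned down, the rest is the straightforward union-of-families assembly described above, and the theorem follows by induction on $k$.
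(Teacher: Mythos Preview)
Your proposal is correct and follows essentially the same approach as the paper: unfold the recursion $\mathrm{bsc}_k = \bigl(\mathrm{bsc}_{k-1} + \tfrac{q-1}{q^{k-1}(2q-1)}\bigr) \oplus \mathrm{bsc}_k'$ so that $\mathrm{bsc}_k$ is a shifted copy of $\mathrm{bsc}_1$ together with shifted copies of each $\mathrm{bsc}_j'$, and read off the $y$-intercepts of each piece from the constructions in Theorem~\ref{thm:cyl1}, Lemma~\ref{lem:bsc2}, and Theorem~\ref{thm:bsck'}. The paper's proof is terser---it simply asserts the decomposition and the corresponding shifts without spelling out the $\mathrm{bsc}_k'$ intercept list---whereas you correctly identify that extracting those intercepts from the flow computation is where the real bookkeeping lies; your plan for handling it (re-running the flow and recording every portal return) is exactly what is needed.
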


\begin{proof} The saddle connections $\mathrm{bsc}_k$ consists of vertical shifts of each $\mathrm{bsc}_j^{\prime}$ for every $1 < j < k$ (where the vertical shift depends on $j$), as well as $\mathrm{bsc}_k^{\prime}$ and a vertical shift of $\mathrm{bsc}_1$. Thus, the intersections of $\mathrm{bsc}_k$ correspond to the intersections of $\mathrm{bsc}_1$ shifted up by $\sum_{i=1}^{k-1} \frac{q-1}{q^i(2q-1)}$, $\mathrm{bsc}_2^{\prime}$ shifted up by $\sum_{i=2}^{k-1} \frac{q-1}{q^i(2q-1)}$, and so on, until we reach $\mathrm{bsc}_k^{\prime}$.
\end{proof}

\section{Infinitely many cylinders (or finding successive cylinders in the $\frac{1}{2-r}$-direction)}\label{sec:cyl}

In this section, we define a map $f_r$ to construct new cylinders from existing cylinders. 

\begin{defn}\label{defn:f_r} For $0<r<1,$ we define $\tilde{f}_r: \R^2 \to \R^2$, where $$\tilde{f}_r:\begin{pmatrix} x \\ y\end{pmatrix} \mapsto \begin{pmatrix} r x + 1 \\ r y\end{pmatrix} = r \begin{pmatrix}
    x\\
    y
\end{pmatrix}+\begin{pmatrix}1\\0   
\end{pmatrix}.$$ Observe that $\tilde{f}_r$ is an injective map. \end{defn}

The map does not descend from $\R^2$ to a well-defined map on the quotient of the polygonal representation of an armadillo tail. The issue is that the map does not respect the vertical gluings (by horizontal translations). For instance, the left edge of $\square_1$ is mapped to the left edge of $\square_2$, but the left edge of $\square_2$ is glued to the \emph{right} edge of $\square_1$. However, the map does descend to a partial quotient, where we only identify the top and bottom edges, since $\tilde{f}_r$ respects the identifications along the tops and bottoms of the squares. That is the content of the following lemma, whose proof is elementary.

\begin{lem}
Let $P_r$ be the polygonal representation of an armadillo tail $X$ with parameter $r$ such that the polygonal representation is embedded in $\R^2$ and the edge identifications forgotten. Let $X^{tb}$ be a quotient of $P_r$ by identifying the top and bottom edges only. Let $\square_{k}^{tb}$ denote the $k$th-square in $X^{tb}$. Then $\tilde{f}_r$ descends to a map $f_r$ on $X^{tb}$. The image of $\square_{k}^{tb}$ under $f_r$ is $\square_{k+1}^{tb}.$
\end{lem}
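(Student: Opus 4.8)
The plan is a short bookkeeping argument resting on two observations: (i) $\tilde{f}_r$ maps the $k$th square of the polygonal representation \emph{onto} the $(k+1)$st square, and (ii) it conjugates the top-to-bottom gluing of $\square_k$ to that of $\square_{k+1}$. The reason one passes to the partial quotient $X^{tb}$ rather than the armadillo tail itself is precisely that $\tilde{f}_r$ does \emph{not} conjugate the left-to-right (vertical-edge) identifications to one another — the left edge of $\square_1$ goes to the left edge of $\square_2$, but that edge is glued to the right edge of $\square_1$ — whereas it does respect the top-to-bottom (horizontal-edge) identifications, which is all $X^{tb}$ retains.

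First I would fix coordinates for the geometric armadillo tail (so $l_k = r^{k-1}$). Put $s_m = \sum_{i=0}^{m-1} r^i = \frac{1-r^m}{1-r}$ for $m \ge 0$ (so $s_0 = 0$ and $s_1 = 1$); then $\square_k = [s_{k-1}, s_k] \times [0, r^{k-1}] \subset \R^2$, with roof $R_k = [s_{k-1}, s_k] \times \{r^{k-1}\}$ and base $B_k = [s_{k-1}, s_k] \times \{0\}$, and the relation $\sim^{tb}$ on $P_r = \bigcup_{k \ge 1} \square_k$ is the equivalence relation generated, for each $k \ge 1$, by the vertical translation $\tau_k : R_k \to B_k$, $(x, r^{k-1}) \mapsto (x, 0)$. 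The only arithmetic needed is $r\,s_m + 1 = s_{m+1}$, immediate from the closed form. Since $\tilde{f}_r$ is an affine bijection of $\R^2$ taking the $x$-interval $[s_{k-1}, s_k]$ onto $[r\,s_{k-1} + 1,\ r\,s_k + 1] = [s_k, s_{k+1}]$ and the $y$-interval $[0, r^{k-1}]$ onto $[0, r^k]$, it restricts to a bijection $\square_k \to \square_{k+1}$. In particular $\tilde{f}_r(P_r) = \bigcup_{k \ge 2} \square_k \subseteq P_r$, so $\tilde{f}_r$ restricts to a continuous injection $P_r \to P_r$.

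Next I would check compatibility with $\sim^{tb}$. From the previous step $\tilde{f}_r(R_k) = R_{k+1}$ and $\tilde{f}_r(B_k) = B_{k+1}$, while for $(x, r^{k-1}) \in R_k$ one has
\[
\tilde{f}_r\bigl(\tau_k(x, r^{k-1})\bigr) = \tilde{f}_r(x, 0) = (rx+1, 0) = \tau_{k+1}(rx+1, r^k) = \tau_{k+1}\bigl(\tilde{f}_r(x, r^{k-1})\bigr),
\]
so $\tilde{f}_r$ carries each generating pair $\{p, \tau_k(p)\}$ of $\sim^{tb}$ to the generating pair $\{\tilde{f}_r(p), \tau_{k+1}(\tilde{f}_r(p))\}$. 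Applying $\tilde{f}_r$ along an elementary chain realizing $p \sim^{tb} q$ then gives $\tilde{f}_r(p) \sim^{tb} \tilde{f}_r(q)$, so $\pi \circ \tilde{f}_r$ is constant on the fibers of the quotient map $\pi : P_r \to X^{tb}$ and therefore factors through it, producing a well-defined $f_r : X^{tb} \to X^{tb}$ with $f_r \circ \pi = \pi \circ \tilde{f}_r$; $f_r$ is continuous by the universal property of the quotient. Finally $f_r(\square_k^{tb}) = f_r(\pi(\square_k)) = \pi(\tilde{f}_r(\square_k)) = \pi(\square_{k+1}) = \square_{k+1}^{tb}$.

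There is no real obstacle, and the proof is short; the one point worth stating carefully is that a $\sim^{tb}$-class may contain more than two points — for example the shared corner $(1,0)$ of $B_1$ and $B_2$ lies in the three-element class $\{(1,0),\,(1,1),\,(1,r)\}$, with $(1,1) \in R_1$ and $(1,r) \in R_2$ — so the compatibility of $\tilde{f}_r$ with $\sim^{tb}$ should be phrased, as above, in terms of generators of the relation rather than by matching equivalence classes of a fixed size.
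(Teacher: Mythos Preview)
Your proof is correct and follows exactly the elementary verification the paper has in mind; in fact the paper does not write out a proof at all, stating only that the lemma ``is elementary,'' and the discussion preceding the lemma (that $\tilde{f}_r$ respects the top/bottom identifications but not the vertical ones) is precisely the content you make rigorous. Your care with the coordinates $s_m$ and with the generator-level compatibility $\tilde{f}_r\circ\tau_k=\tau_{k+1}\circ\tilde{f}_r$ is more than the paper asks for, but nothing is missing or wrong.
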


Let $q: P_r \to X^{tb}$ be the quotient map identifying the top and bottom edges of the polygon. Let $\mathrm{cyl}_k$ be a cylinder, and lift $\mathrm{cyl}_k$ to $X^{tb}$. Call this lift $\mathrm{cyl}_k^{tb}.$
Let $L$ and $R$ denote the unidentified left and right edges of the polygon. Inductively define $\mathrm{cyl}_{k+1}$ as the closure of $q \circ f_r (\mathrm{cyl}_k^{tb} \setminus (L \cup R))$ with respect to the linear flow in the $\frac{1}{2-r}$-direction. Observe that $\mathrm{cyl}_{k+1}$ does not depend on the chosen lift of $\mathrm{cyl}_k$.

Given a geometric armadillo tail with parameter $r=1/q,$ first we show that $\mathrm{cyl}_k$ lies entirely on $X_{k+1}=\bigcup\limits_{i=1}^{k+1} \square_i.$ Then $q \circ f_r(\mathrm{cyl}_k)$ is a subset of a cylinder that lies in $X_{k+1}^{tb}\setminus \square_1.$ We call this a \emph{partial cylinder}. We will show that there is a circle rotation on $\{0\} \times [0,1]$ that fills in $q \circ f_r(\mathrm{cyl}_k)$ at the points of discontinuity.

We define where the circle rotation is defined, and prove that waist curves of cylinders are periodic points under the circle rotation.

We define the \emph{generation zone} as the preimage of $\mathrm{cyl}_1$ in $X \setminus \square_1$ under $f_r$:

\begin{align*}
\text{generation zone} & := f^{-1}_{r}( \mathrm{Int}(\mathrm{cyl}_1 \cap \square_2)) \\
&= \left\{(x,y): \dfrac{1}{2-r} x < y < \dfrac{1}{2-r} x + \dfrac{1-r}{2-r}, \, 0 \leq x \leq 1\right\}. \\
\end{align*} 

Given the set of points where $\mathrm{cyl}_k$ intersects $\{0\}\times[0,1]$ and $\{1\}\times[0,1],$ we remove the points that lie in the generation zone. Define sets $S_1$ (and $S_2,$ resp.) on $\{0\}\times[0,1]$ (and $\{1\}\times[0,1],$ resp.) as the image of the remaining points under $f_r.$ That is, $$S_1= \text{proj}_y\circ f_r\left(\gamma \cap \left\{(0,y): 0 < y < \dfrac{1}{2-r}\right\}\right)$$ and $$S_2 = \left\{f_r \left(\gamma \cap \left\{(0,y): \dfrac{1-r}{2-r} < y < 1\right\}\right)\right\}$$ where $\text{proj}_y(x,y) = (0,y)$ is the projection onto the $y$-axis. 

Recall the circle rotation $T(x) = x + \frac{q}{2q-1}$. Note that $q\circ f_r(\mathrm{cyl}_k) \subset X^{tb}\setminus\square_1$ is a subset of a cylinder. Since the circle rotation is a section of the linear flow, we ``fill in'' $q\circ f_r(\mathrm{cyl}_k)$ at the points of discontinuity to construct $\mathrm{cyl}_{k+1}.$ Figure~\ref{fig:fr-of-cyl1} illustrates the setting.  

\begin{figure}[h]
    \centering    
    \includegraphics[width=0.9\linewidth]{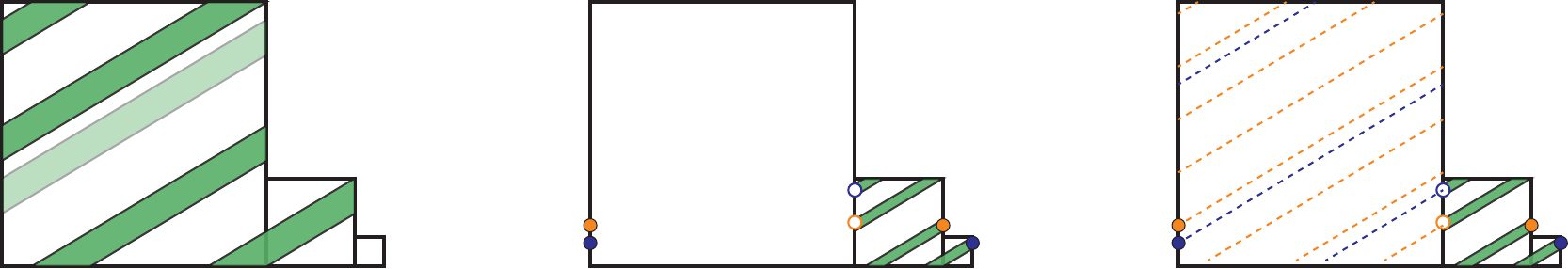}
    \caption{$\mathrm{cyl}_1 \setminus \text{generation zone} (\text{left}),$ $f_r\left(\mathrm{cyl}_1 \setminus \text{generation zone}\right)$ (center), connecting $S_1$ and $S_2$ via the circle rotation $T$ (right). The dotted lines represent $\mathrm{bsc}_2$ in $\square_1.$}
    \label{fig:fr-of-cyl1}
\end{figure}

We illustrate the simplest case ($k=1$) before we prove the general case for all $k.$ Since we can explicitly write the points at which $\mathrm{bsc}_1$ intersects the $y$-axis (Theorem~\ref{thm:cyl1}) we list them here and take a waist curve $\gamma$ to be $\epsilon$ above these points where $0 < \epsilon < \frac{q-1}{q(2q-1)}$. The $y$-coordinate of these points are $$\left\{\frac{1}{2q-1}, \ldots, \widehat{\frac{q}{2q-1}}, \ldots,\frac{2q-2}{2q-1}\right\},$$ hence we have $$S_1 = \left\{(0,y): y= \frac{1}{q(2q-1)}, \ldots, \frac{q-1}{q(2q-1)}\right\}, \quad S_2 = \left\{(1,y): y = \frac{q-1}{q(2q-1)}, \ldots, \frac{2q-2}{q(2q-1)}\right\}.$$
Take $i=1, \ldots, q-2,$ then $$\frac{i}{q(2q-1)} \xrightarrow{T}  \frac{i+q^2}{q(2q-1)} \xrightarrow{T}  \frac{i+2q^2}{q(2q-1)} \equiv  \frac{i}{q(2q-1)},$$ where $\frac{1}{q} <  \frac{i+q^2}{q(2q-1)} < 1$ for $i \in \{1, \ldots, q-2\}.$ We note that $\left\{\left(0,\frac{i+q^2}{q(2q-1)}\right)\right\}_{i=1}^{q-2}$ are $q-2$ additional points where we hit the $y$-axis.

When $i=q-1,$ $$ \frac{q-1}{q(2q-1)} \xrightarrow{T} \frac{q-1+q^2}{q(2q-1)} \xrightarrow{T} \frac{q-1+2q^2}{q(2q-1)} \equiv \frac{2q-1}{q(2q-1)}.$$ First note that $\left(1,\frac{2q-1}{q(2q-1)}\right)$ is a singularity. Since $\gamma$ is slightly above this, we continue by iterating $T$. Moreover, note that $T^{2q-1}\left(\frac{q-1}{q(2q-1)}\right) = \frac{q-1+(2q-1)q^2}{q(2q-1)} \equiv \frac{q-1}{q(2q-1)}.$ We will show below that for any $m<2q-1,$ the $m$th iterate hits the portal of $\square_1,$ i.e., $\frac{1}{q} < T^m(\frac{q-1}{q(2q-1)}) = \frac{q-1+mq^2}{q(2q-1)} < 1.$ This adds an additional $q-2$ points where we hit the $y$-axis.

\begin{defn}\label{defn:skew-width} Given $\mathrm{cyl}_k,$ we define the \emph{skew-width of $\mathrm{cyl}_k$} as the vertical distance (vertical in relation to the polygonal representation) between the two boundary saddle connections of $\mathrm{cyl}_k$ through the cylinder. \end{defn}

\begin{lem}\label{lem:skewwidth} Given a geometric armadillo tail with parameter $1/q$, the skew-width of $\mathrm{cyl}_k$ is $\frac{q-1}{q^k (2q-1)}$, and the width of $\mathrm{cyl}_k$ is $\frac{q-1}{q^k \sqrt{q^2 + (2q-1)^2}}$.\end{lem}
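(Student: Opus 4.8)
The plan is to prove both claims by induction on $k$, using the recursive construction of $\mathrm{bsc}_k$ and the map $f_r$. The skew-width is, by Definition~\ref{defn:skew-width}, the vertical distance (in the polygonal representation) between the bottom boundary $\mathrm{bsc}_k$ and the top boundary of $\mathrm{cyl}_k$. For the base case $k=1$, Step~2 of Theorem~\ref{thm:cyl1} already establishes that the top saddle connection of $\mathrm{cyl}_1$ is obtained from $\mathrm{bsc}_1$ by a vertical shift of $\frac{q-1}{q(2q-1)}$, and Step~3 shows there is nothing in between; so the skew-width of $\mathrm{cyl}_1$ is $\frac{q-1}{q(2q-1)} = \frac{q-1}{q^1(2q-1)}$, matching the formula.

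For the inductive step, I would use the defining relation $\mathrm{cyl}_{k+1} = \overline{q \circ f_r(\mathrm{cyl}_k^{tb} \setminus (L \cup R))}$ together with the fact that $\tilde f_r$ scales the $y$-coordinate by $r = \frac 1q$. A vertical segment of length $\ell$ in $\mathrm{cyl}_k$ is sent by $\tilde f_r$ to a vertical segment of length $\ell/q$; since the skew-width is measured along the vertical direction and $f_r$ respects the top/bottom identifications, the skew-width of $\mathrm{cyl}_{k+1}$ is exactly $\frac 1q$ times the skew-width of $\mathrm{cyl}_k$. By the inductive hypothesis this is $\frac 1q \cdot \frac{q-1}{q^k(2q-1)} = \frac{q-1}{q^{k+1}(2q-1)}$, as desired. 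This is consistent with the remark in Section~\ref{sec:bsc} that $\frac{q-1}{q^{k-1}(2q-1)}$ is the vertical shift relating $\mathrm{bsc}_{k-1}$ and $\mathrm{bsc}_k$, i.e. the skew-width of $\mathrm{cyl}_{k-1}$: the top boundary of $\mathrm{cyl}_k$ is the vertical shift of $\mathrm{bsc}_k$ by the skew-width, and one checks directly from Theorem~\ref{thm:bsck} that the intersection points of the top boundary with $x=0$ agree with those of $\mathrm{bsc}_{k+1}$, so the two cylinders $\mathrm{cyl}_k$ and $\mathrm{cyl}_{k+1}$ are genuinely adjacent and the geometry closes up.

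For the width, recall that the waist curves of $\mathrm{cyl}_k$ have slope $\frac{1}{2-r} = \frac{q}{2q-1}$, so the bounding saddle connections also have this slope, hence direction vector proportional to $(2q-1, q)$, a vector of length $\sqrt{q^2+(2q-1)^2}$. The width of a cylinder is the distance between the two bounding parallel lines. Two parallel lines of slope $\frac{q}{2q-1}$ whose $y$-intercepts differ by the skew-width $s$ are at perpendicular distance $s \cdot \frac{|2q-1|}{\sqrt{q^2+(2q-1)^2}}$ — i.e. the vertical separation times the cosine of the angle the lines make with the vertical, equivalently $s$ divided by $\sqrt{1 + (q/(2q-1))^2} \cdot \frac{2q-1}{2q-1}$; carrying out this elementary computation with $s = \frac{q-1}{q^k(2q-1)}$ gives $\frac{q-1}{q^k(2q-1)} \cdot \frac{2q-1}{\sqrt{q^2+(2q-1)^2}} = \frac{q-1}{q^k\sqrt{q^2+(2q-1)^2}}$, which is the stated width.

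The only genuine obstacle is the inductive step for the skew-width: one must be sure that $f_r$ really does carry the \emph{full width} of $\mathrm{cyl}_k$ onto the full width of $\mathrm{cyl}_{k+1}$ — that is, that the "fill in via the circle rotation $T$" procedure described before Definition~\ref{defn:skew-width} does not change the vertical extent between the two bounding saddle connections, only completes the waist curves into closed geodesics. This is where one invokes that $T$ is a section of the linear flow in the $\frac{q}{2q-1}$-direction, so the filled-in pieces lie on the same straight lines (hence between the same two bounding parallel saddle connections) as the image $q\circ f_r(\mathrm{cyl}_k)$; thus the vertical gap is preserved and equals $\frac 1q$ times that of $\mathrm{cyl}_k$. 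Everything else is the routine trigonometric conversion between vertical separation and perpendicular width recorded above.
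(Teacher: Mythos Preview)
The paper states Lemma~\ref{lem:skewwidth} without proof, so there is no argument to compare yours against; the value $\frac{q-1}{q^{k-1}(2q-1)}$ is simply asserted in Section~\ref{sec:bsc} and then quoted throughout. Your proposal supplies exactly the natural argument the paper leaves implicit: the base case is literally Steps~2--3 of Theorem~\ref{thm:cyl1}, the inductive step is the observation that $\tilde f_r$ scales the $y$-coordinate by $1/q$ while the circle-rotation fill-in extends segments along lines of the same slope and hence cannot alter the vertical gap between the bounding saddle connections, and the passage from skew-width to width is the elementary projection $s\mapsto s\cdot\frac{2q-1}{\sqrt{q^2+(2q-1)^2}}$ you carry out. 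This is correct, and indeed the paper tacitly relies on precisely this scaling reasoning when it writes $\mathrm{bsc}_k=\mathrm{bsc}_{k-1}+\frac{q-1}{q^{k-1}(2q-1)}\oplus\mathrm{bsc}_k'$ in Theorem~\ref{thm:bsc}.

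One organizational remark: at the point where the lemma is stated, the fact that $\mathrm{cyl}_{k+1}$ really is a cylinder (rather than merely the closure of a flow-saturated set) has not yet been established --- that comes in the theorem immediately following. Your proof is fine as written because the skew-width computation only needs the top and bottom boundaries, which are determined by $f_r(\mathrm{bsc}_k)$ and its extension along the flow; but it is worth flagging that the word ``cylinder'' in the lemma is being used slightly in advance of its justification.
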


\begin{thm} The circle rotation $T:[0,1]/_\sim \to [0,1]/_\sim$ where $T(x) = x + \frac{q}{2q-1}$ maps the $y$-coordinates in $S_1$ to the $y$-coordinates in $S_2$ defined above. \end{thm}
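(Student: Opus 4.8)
The plan is to make $S_1$ and $S_2$ completely explicit, recast the statement as an arithmetic fact about the rotation $T$, and verify that fact orbit by orbit. Since $\tilde f_r$ acts on the left edge of $\square_1$ by $(0,y)\mapsto(1,ry)$ with $r=1/q$, and Theorem~\ref{thm:bsck} records the exact $y$-coordinates at which $\mathrm{bsc}_k$ --- hence, shifted up by a small $\epsilon$ with $0<\epsilon<\frac{q-1}{q^k(2q-1)}$, the chosen waist curve $\gamma$ of $\mathrm{cyl}_k$ --- meets $\{x=0\}$, one can write $S_1$ and $S_2$ down explicitly: both are obtained from that list by multiplying every coordinate by $1/q$ and then retaining the points of $\gamma\cap\{0<y<\frac1{2-r}\}$ (for $S_1$) and of $\gamma\cap\{\frac{1-r}{2-r}<y<1\}$ (for $S_2$). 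Two facts drop out at once and are used throughout: every point of $S_1\cup S_2$ lies strictly below $\frac1q$ (this just says $\tilde f_r$ sends the left edge of $\square_1$ into the $\square_1$--$\square_2$ interface, of height $1/q$), and because $\epsilon$ is strictly smaller than the skew-width $\frac{q-1}{q^k(2q-1)}$ of $\mathrm{cyl}_k$ (Lemma~\ref{lem:skewwidth}), neither a point of $S_1\cup S_2$ nor any $T$-iterate of one ever equals $\frac1q$ exactly, so the filling-in trajectory stays off the wild singularity.

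Next I would unwind what ``$T$ maps $S_1$ to $S_2$'' means as a map. The rotation $T$ is the first-return map of the $\frac1{2-r}$-directional flow to $\{x=0\}$ through the portal of $\square_1$, but only as long as the flow stays at height $\ge\frac1q$ inside $\square_1$; the first time an iterate dips below $\frac1q$, the flow leaves $\square_1$ through the interface with $\square_2$. So set $m(s)=\min\{m\ge1:T^m(s)<\tfrac1q\}$ and $\phi(s)=T^{m(s)}(s)$. Because $T$ is rotation by $\frac{q}{2q-1}$ and all points in play are rationals with denominator $q^k(2q-1)$ plus the common small shift $\epsilon/q$, $T$ advances the numerator by $q^{k+1}$ modulo $q^k(2q-1)$; since $\gcd(q^{k+1},q^k(2q-1))=q^k$, every $T$-orbit is a cyclic set of $2q-1$ equally spaced points, which in particular returns to its (sub-$\tfrac1q$) starting point. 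Hence $m(s)$ is finite, and the theorem is exactly the assertion that $\phi$ restricts to a bijection $S_1\to S_2$.

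The heart of the proof --- and the step I expect to be the real obstacle, since it must be carried out for every $k$ and the list in Theorem~\ref{thm:bsck} splits into several rows --- is an orbit count: each $T$-orbit meeting $S_1\cup S_2$ contains, below $\frac1q$, either a single point, whose $y$-coordinate is then common to $S_1$ and $S_2$ (the ``diagonal'' case, coming from the band $\frac{1-r}{2-r}<y<\frac1{2-r}$ of $\gamma\cap\{x=0\}$; for $k=1$ this is $i=q-1$), or exactly two points, one contributing to $S_1$ and one to $S_2$, placed so that reading the orbit cyclically forward from the $S_1$-point the next sub-$\tfrac1q$ point is the $S_2$-point. Granting this, $\phi$ carries the unique $S_1$-point of an orbit to the unique $S_2$-point of the same orbit, so it respects the orbit partition and is a bijection. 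For $k=1$ this is the elementary check already sketched in the text: the relevant orbits are indexed by the nonzero residues $m_0$ modulo $q$, the orbit of $m_0$ is $\{\frac{m_0+qt}{q(2q-1)}+c:0\le t\le 2q-2\}$ with $c=\epsilon/q$, and $\frac{m_0+qt}{q(2q-1)}+c<\frac1q$ exactly for $t=0$ and, when $m_0\le q-2$, also $t=1$; this gives the single point $\frac{q-1}{q(2q-1)}+c$ when $m_0=q-1$ and otherwise the pair $\{\frac{m_0}{q(2q-1)}+c,\ \frac{m_0+q}{q(2q-1)}+c\}$, which are the $S_1$- and $S_2$-points of that orbit. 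For general $k$ the same computation is run on the Theorem~\ref{thm:bsck} list after scaling by $1/q$, where one must track the row structure to see which scaled points land in the low part (hence in $S_1$) and which in the high part (hence in $S_2$); following the inductive bookkeeping already used for $\mathrm{bsc}_k$ and the maps $T_k$ in Section~\ref{sec:bsc}, one checks that the scaled low and high parts pair up one point per $T$-orbit in precisely the required interleaved pattern. This also shows that the filled-in arcs close $q\circ f_r(\mathrm{cyl}_k)$ up into a genuine cylinder.
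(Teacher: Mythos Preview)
Your framing via the first-exit map $\phi(s)=T^{m(s)}(s)$ and the orbit count (each $T$-orbit is a coset of $\tfrac1{2q-1}\Z$ in $\R/\Z$, hence meets $[0,\tfrac1q)$ in one or two points) is correct and is exactly the mechanism behind the paper's argument. For $k=1$ your computation agrees with the paper's, and your observation that $\epsilon<\tfrac{q-1}{q^k(2q-1)}$ keeps all iterates off the singular height $\tfrac1q$ is the same safeguard the paper uses implicitly.

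The gap is the general-$k$ step. You correctly flag it as ``the real obstacle'' and then defer to ``the inductive bookkeeping'' of Section~\ref{sec:bsc} and the phrase ``one checks.'' But nothing in Section~\ref{sec:bsc} establishes the pairing you need: that on each $T$-orbit meeting $S_1\cup S_2$, the sub-$\tfrac1q$ points split as one $S_1$-point and one $S_2$-point in the correct cyclic order (or coincide). The orbit count alone only tells you there are one or two sub-$\tfrac1q$ points per orbit; it does not rule out, say, two $S_1$-points on one orbit and two $S_2$-points on another. The paper closes this gap by an explicit case analysis on the simplified lists for $S_1$ and $S_2$: the ``bottom row'' points $\tfrac{i}{q^k(2q-1)}$ are handled in one step, the rows $j=2,\dots,k-1$ and $j=1,\,i\le q-2$ are handled by $T^2$ together with two inequality checks showing the single intermediate iterate lands in $[\tfrac1q,1)$, and the lone point $j=1,\,i=q-1$ (your ``diagonal'' case) is handled by showing all $2q-2$ intermediate iterates stay in $[\tfrac1q,1)$. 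Each of these is a short chain of elementary inequalities in $q$ and $k$, but they must actually be written down; your proposal stops just before doing so.
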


\begin{proof} Theorem~\ref{thm:bsck} tells us exactly where $\mathrm{bsc}_k$ intersects $\{[0,y] : 0 < y < 1\}.$ Recall that $S_1$ consists of points on $\{0\}\times[0,1]$ whose $y$-coordinates are $$\dfrac{1}{q}\left(\bigcup\limits_{j=1}^{k-1} \left\{\dfrac{i}{q^{j-1}(2q-1)}\right\}_{i=1}^{q-1} + \sum\limits_{i=j}^{k-1} \dfrac{q-1}{q^i(2q-1)} \cup \left\{\dfrac{i}{q^{k-1}(2q-1)}\right\}_{i=1}^{q-1}\right)$$ and $S_2$ consists of points on $\{1\}\times[0,1]$ whose $y$-coordinates are 
$$\begin{array}{rl} & \dfrac{1}{q} \left(\left\{\dfrac{q-1}{2q-1}, \dfrac{q+1}{2q-1}, \cdots, \dfrac{2q-2}{2q-1}\right\} + \sum\limits_{i=1}^{k-1} \dfrac{q-1}{q^i(2q-1)}\right) \\
\cup & \bigcup\limits_{j=2}^{k-1} \dfrac{1}{q} \left(\left\{\dfrac{i+q^j}{q^{j-1}(2q-1)}\right\}_{i=1}^{q-1} + \sum\limits_{i=j}^{k-1} \dfrac{q-1}{q^i(2q-1)}\right)\\
\cup & \dfrac{1}{q} \left\{\dfrac{i+q^k}{q^{k-1}(2q-1)}\right\}_{i=1}^{q-1}.\end{array}$$ 
These can be simplified to $$S_1 = \left\{(0,y): y \in \bigcup\limits_{j=1}^{k-1}\left\{\dfrac{(i+1)q^{k-j}-1}{q^k(2q-1)}\right\}_{i=1}^{q-1} \cup \left\{\dfrac{i}{q^k(2q-1)}\right\}_{i=1}^{q-1}\right\}$$ and 
$$S_2 = \left\{(1,y):y \in \left\{\dfrac{(1+i)q^{k-1}+q^k-1}{q^k(2q-1)}\right\}_{i=-1,i\neq0}^{q-2} \cup \bigcup\limits_{j=2}^{k-1}\left\{ \dfrac{(1+i)q^{k-j}+q^k-1}{q^k(2q-1)}\right\}_{i=1}^{q-1}\cup \left\{\dfrac{i+q^k}{q^{k-1}(2q-1)}\right\}_{i=1}^{q-1}\right\}.$$

We will show that under some iterate of $T,$ $S_1$ maps onto $S_2.$ We will break this down into three cases. In each case, we show that a point in $S_1$ maps to a point in $S_2$ under $T^2$ (or $T^{2q-1}$) but any fewer iterate of $T$ maps it to the complement of $S_2$ on $\{1\}\times[0,1],$ i.e., $\left\{(1,y): \frac{q}{2q-1} < y < 1\right\}.$

\textbf{Case 1.} First we have $T\left(\frac{i}{q^k(2q-1)}\right) = \frac{i+q^k}{q^k(2q-1)}$ for any $i \in \{1, \ldots, q-1\}.$

\textbf{Case 2-1.} Consider the cases where $j=2, \ldots, k-1,$ and $i=1,\ldots, q-1.$ Then $$T^2\left(\dfrac{(i+1)q^{k-j}-1}{q^k(2q-1)}\right)=\dfrac{(i+1)q^{k-j}-1+2q^{k+1}}{q^k(2q-1)} \equiv \dfrac{(i+1)q^{k-j}-1+q^k}{q^k(2q-1)}.$$ We show that $T\left(\frac{(i+1)q^{k-j}-1}{q^k(2q-1)}\right)$ does not hit any point in $S_2,$ i.e., $$\dfrac{1}{q} < \dfrac{(i+1)q^{k-j}-1+q^{k+1}}{q^k(2q-1)} < 1.$$ The left inequality holds since it is equivalent to the inequalities below: $$\begin{array}{rl} q^{k-1}(2q-1) & < (i+1) q^{k-j} - 1 + q^{k+1}\\
1 & < (i+1) q^{k-j} + q^{k+1} - q^{k-1} (2q-1)\\
& = q^{k-1}\left((i+1)q^{1-j}+(q-1)^2\right)\end{array},$$ and the right inequality holds since it is equivalent to $$\begin{array}{rl} q^k(q-1) & > (i+q)q^{k-j}-1\\
q^k \left(q-1-(i+1)q^{-j}\right) + 1 & \geq q^k\left(q-1-\frac{q}{q^j}\right) + 1 > 0.\end{array}$$

\textbf{Case 2-2.} Next, the cases where $j=1$ and $i=1,\ldots,q-2$ can be shown with the same technique as in Case 2-1: we have $$T^2\left(\dfrac{(i+1)q^{k-1}-1}{q^k(2q-1)}\right) = \dfrac{(i+1)q^{k-1}-1+2q^k}{q^k(2q-1)}\equiv \dfrac{(i+1)q^{k-1}-1+q^k}{q^k(2q-1)},$$ and $\frac{1}{q} < T\left(\frac{(i+1)q^{k-1}-1}{q^k(2q-1)}\right) < 1.$ The left inequality holds since $$\begin{array}{rl} q^{k-1}(2q-1) & < (i+1) q^{k-1} - 1 + q^{k+1}\\
1 & < q^{k-1}\left(i+2+2q+q^2\right).\end{array}$$ However, the right inequality holds only for $i=1,\ldots, q-2:$
$$\begin{array}{rl} (i+1) q^{k-1} - 1 + q^{k+1} & < 2q^{k+1}-q^k\\
-1 & < q^{k+1}-(i+2)q^k = q^k \left(q-(i+2)\right).\end{array}$$

\textbf{Case 3.} Lastly, we deal with $j=1$ and $i=q-1.$

After $2q-1$-iterates, $\frac{q^k-1}{q^k(2q-1)}$ is mapped to itself. We need to show that for any $m < 2q-1,$ $T^m\left(\frac{q^k-1}{q^k(2q-1)}\right)$ falls between $\frac{1}{q}$ and 1, hence does not hit any other point in $S_2.$

If $m=2l,$ $(l=1, \ldots, q-1),$ then $$T^m\left(\dfrac{q^k-1}{q^k(2q-1)}\right)=\dfrac{q^k-1+2lq^k}{q^k(2q-1)}\equiv\dfrac{(l+1)q^k-1}{q^k(2q-1)}.$$ We show that $$\frac{1}{q} < \dfrac{(l+1)q^k-1}{q^k(2q-1)} < 1.$$ The left-hand inequality is equivalent to $$\begin{array}{rl} q^{k-1}(2q-1) & < (l+1) q^k - 1\\
1 & < (l+1) q^k - q^{k-1} (2q-1) = q^{k-1} \left((l+1)q-(2q-1)\right) = \left((l-1)q+1\right),\end{array}$$ and the right-hand inequality is equivalent to $$\begin{array}{rl} (l+1)q^k-1 & < q^k (2q-1)\\
-1 & < q^k (2q-2-l).\end{array}$$

If $m=2l+1,$ $(l=1, \ldots, q-2),$ then $$T^m\left(\dfrac{q^k-1}{q^k(2q-1)}\right) = \dfrac{q^k-1+(2l+1)q^{k+1}}{q^k(2q-1)} \equiv \dfrac{(l+1)q^k - 1 + q^{k+1}}{q^k(2q-1)}.$$ Again, we show that this does not hit any point in $S_2.$ First it is greater than $\frac{1}{q}$ since $$\begin{array}{rl} q^{k-1}(2q-1) & < (l+1)q^k - 1 + q^{k+1}\\
1 & < q^{k-1} \left((l+1)q + q^2 + (1-2q)\right) = q^{k-1} \left(q^2 + (l-1)q+1\right),\end{array}$$ and less than 1 since $$\begin{array}{rl}(l+1)q^k - 1 + q^{k+1} & < q^k(2q-1)\\
-1 & < q^k \left(2q-1-(l+1)-q\right) = q^k \left(q-(l+2)\right).\end{array}$$ Take $\gamma$ to be $\varepsilon$ above $\mathrm{bsc}_k$ where $0 < \varepsilon < \frac{q-1}{q^k(2q-1)}.$ Thus, we have connected the disconnected segments of $f_r(\mathrm{cyl}_k)$ to construct a waist curve of $\mathrm{cyl}_{k+1}.$\end{proof}

Figure~\ref{fig:cyl} shows the first few cylinders in this cylinder decomposition for $r = \frac{1}{2}$.

\begin{figure}[htbp]
	\begin{center}
	\includegraphics[width=0.7\linewidth]{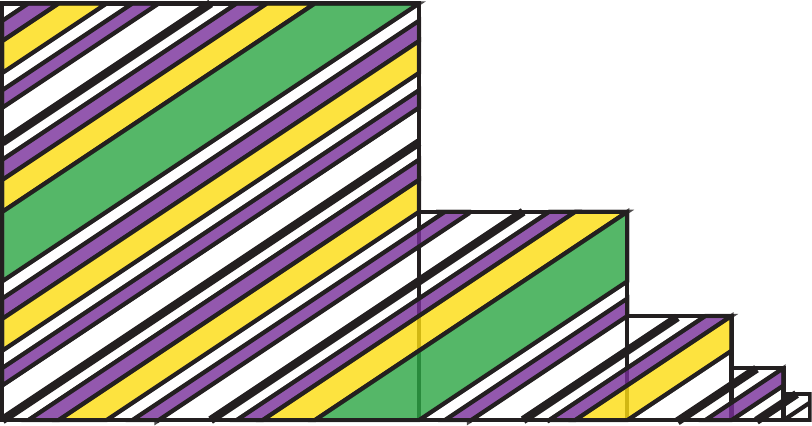}
	\caption{Cylinder decomposition on the geometric armadillo tail ($r=1/2$)}
 \label{fig:cyl}
 \end{center}
 
\end{figure}

In the following theorem, we summarize the work done in this section by identifying our construction as a cylinder decomposition. In other words, we show that the closure of the cylinder decomposition is the entire surface.

\begin{thm}\label{thm:main} Given a geometric armadillo tail with parameter $r = \frac{1}{q},$ there exists a cylinder decomposition in the $\frac{1}{2-r}$-direction with a rigid spine. \end{thm}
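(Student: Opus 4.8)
The plan is to assemble the pieces constructed in Sections~\ref{sec:one-dir}--\ref{sec:cyl} into a single statement: the collection $\{\mathrm{cyl}_k\}_{k \geq 1}$ is a cylinder decomposition, its closure is all of $X$, and the rigid spine is the saddle connection from Theorem~\ref{thm:spine} (more precisely, the limiting saddle connection in the $\frac{1}{2-r}$-direction). First I would verify that the $\mathrm{cyl}_k$ are genuinely cylinders (done: Theorem~\ref{thm:cyl1} for $k=1$, and the $f_r$-construction together with the circle-rotation theorem of this section for $k > 1$) and that they are pairwise disjoint except possibly along saddle connections. Disjointness should follow from the explicit description of the intersections of $\mathrm{bsc}_k$ with the section $x = 0$ in Theorem~\ref{thm:bsck}: the bounding saddle connections $\mathrm{bsc}_k$ are nested in the sense that $\mathrm{bsc}_k = \mathrm{bsc}_{k-1} + \frac{q-1}{q^{k-1}(2q-1)} \oplus \mathrm{bsc}_k'$, so consecutive cylinders share exactly the saddle connection $\mathrm{bsc}_{k-1} + \frac{q-1}{q^{k-1}(2q-1)}$ (the top of $\mathrm{cyl}_{k-1}$, which is part of the bottom of $\mathrm{cyl}_k$), and cylinders appearing in different ``branches'' of the tree of adjacencies are separated.

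Next I would address the covering claim, which is the heart of the theorem. Here I would organize the cylinders into a rooted tree: $\mathrm{cyl}_1$ is the root, and each $\mathrm{cyl}_k$ has $\mathrm{cyl}_{k+1}$ sitting directly above it (adjacent along $\mathrm{bsc}_{k+1}$'s shifted component) together with, whenever $\mathrm{bsc}_{k+1}'$ provides a new bottom saddle connection on $\square_1$ not adjacent to $\mathrm{cyl}_k$, a new ``sub-branch'' beginning a fresh tower of adjacent cylinders. Along any infinite chain of adjacent cylinders, the skew-widths are $\frac{q-1}{q^k(2q-1)}$ by Lemma~\ref{lem:skewwidth}, and these form a convergent geometric series; so each such chain fills a region whose closure is bounded by the limiting saddle connection of that chain. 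For the chain starting at $\mathrm{cyl}_1$, the skew-widths sum to $\sum_{k\geq 1}\frac{q-1}{q^k(2q-1)} = \frac{1}{q(2q-1)} \cdot \frac{q-1}{1-1/q}\cdot\frac{q}{q} $... more cleanly $\sum_{k\geq1}\frac{q-1}{q^k(2q-1)} = \frac{1}{2q-1}$, i.e. the chain limits precisely onto the segment from $\left(0,\frac{1}{2q-1}\right)$ ``downward'' to the spine direction, and the limit saddle connection is the one from Theorem~\ref{thm:spine} — this is the rigid spine. For every other infinite chain, the analogous computation shows the chain limits onto an edge that is itself a boundary saddle connection of a previously-constructed cylinder, so no new spine is created there. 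Then a measure/area argument closes the covering: by Section~\ref{sec:area-of-cyls} the total area of all the $\mathrm{cyl}_k$ equals the area of $X$, so the closure of $\bigcup_k \mathrm{cyl}_k$ is all of $X$ up to the measure-zero spine, hence (being closed) is all of $X$.

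Finally I would check that the spine is rigid, i.e. a single saddle connection rather than a flexible concatenation. This is exactly the content of Theorem~\ref{thm:spine}: the limiting curve of the root chain is one closed saddle connection in the $\frac{1}{2-r}$-direction wrapping around every torus, with no singularity in its interior. I would also remark that by construction every segment of the closure of $\bigcup_k\mathrm{cyl}_k$ other than this spine lies in some $\mathrm{cyl}_k$ (as a boundary saddle connection of a cylinder that is the ``top'' of an adjacent cylinder below it), so the decomposition has exactly one spine, and it is rigid.

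The main obstacle I expect is the bookkeeping for the covering claim: one must argue rigorously that the tree of cylinders, with its infinitely many infinite branches, actually exhausts the surface and that the \emph{only} leftover segment is the single spine — in particular that the limit of each non-root branch genuinely coincides with an already-present cylinder boundary and does not produce additional ``flexible'' leftover segments. The cleanest way around this is to lean on the area computation of Section~\ref{sec:area-of-cyls}: once $\sum_k \mathrm{area}(\mathrm{cyl}_k) = \mathrm{area}(X)$ is known, the combinatorial tree argument only needs to show the leftover set is contained in a countable union of saddle connections with the root branch's limit being the distinguished one, and rigidity is then immediate from Theorem~\ref{thm:spine}.
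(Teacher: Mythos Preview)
Your proposal is correct and follows essentially the same approach as the paper: both arguments sum the geometric series of skew-widths $\sum_{j\geq k}\frac{q-1}{q^j(2q-1)}$ to show that each infinite chain of adjacent cylinders converges either to the rigid spine (the root chain, via Theorem~\ref{thm:spine}) or to a boundary saddle connection of an already-constructed cylinder, and both invoke the area computation of Section~\ref{sec:area-of-cyls} to certify that the closure of the union is the whole surface. Your tree organization and explicit disjointness discussion are minor elaborations on the paper's more bare-hands computation on $\square_1$ followed by transport via $f_r$, but the substance is the same.
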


\begin{proof} We first restrict to $\square_1,$ then use $f_r$ to extend our results to the remaining squares.

For $k>1,$ $\mathrm{bsc}_k'$ hits the $y$-axis $2(q-1)$ many times at points where the $y$-coordinate is $\frac{i}{q^{k-1}(2q-1)}$ and $\frac{i+q^k}{q^{k-1}(2q-1)}$, for $i = 1, \ldots, q-1$. (Theorem~\ref{thm:bsck'}). This corresponds to the times $\mathrm{cyl}_k$ wraps around $\square_1$ when it is not adjacent to $\mathrm{cyl}_{k-1}$. Since there is always a part of $\mathrm{cyl}_{k+1}$ that lies directly above $\mathrm{cyl}_k,$ we sum the skew-widths of subsequent cylinders to observe the convergence in $\square_1.$ We have $$\frac{i}{q^{k-1}(2q-1)} + \frac{q-1}{q^{k}(2q-1)} + \frac{q-1}{q^{k+1}(2q-1)} + \cdots = \frac{i+1}{q^{k-1}(2q-1)}.$$ The same holds for $\frac{i+q^k}{q^{k-1}(2q-1)}$. In other words, for $i=1, \ldots , q-2,$ the part of the cylinders in $\square_1$ converge to the bottom of a part of $\mathrm{cyl}_k.$ Take $i=q-1,$ then the same computation for $\frac{q-1}{q^{k-1}(2q-1)}$ yields $\frac{1}{q^{k-2}(2q-1)}$ which is a part of the bottom of $\mathrm{cyl}_{k-1},$ and for $\frac{q-1+q^k}{q^{k-1}(2q-1)},$ we get $\frac{1+q^{k-1}}{q^{k-2}(2q-1)},$ which is also a part of the bottom of $\mathrm{cyl}_{k-1}.$ In other words, given $\mathrm{cyl}_k \cap \square_1$ not adjacent to $\mathrm{cyl}_{k-1},$ there is a sequence $\mathrm{cyl}_i \cap \square_1$ that converges to the bottom of $\mathrm{cyl}_k \cap \square_1$ or $\mathrm{cyl}_{k-1} \cap \square_1.$  

When $k=1,$ we refer to Theorem~\ref{thm:cyl1}. The points at which $\mathrm{bsc}_1$ intersects the $y$-axis is at points where the $y$-coordinate is $\frac{i}{2q-1}$ for $i=1, \ldots, 2q-2$ but not $i=q.$ Then adding the sum of skew-widths $$\frac{i}{2q-1} + \frac{q-1}{q(2q-1)} + \frac{q-1}{q^2(2q-1)} + \cdots = \frac{i+1}{2q-1}$$ shows us that $\mathrm{cyl}_1 \cap \square_1$ converges to either the bottom of $\mathrm{cyl}_1 \cap \square_1$, or a singularity (when $i=2q-2$), or the rigid spine (when $i=q-1$).

Similar computations hold for $\square_2$, and applying the $f_r$ map implies that the same convergence applies in all squares. 

\end{proof}

\section{Measurements of cylinders (or computations reinforcing work in previous sections)}\label{sec:area-of-cyls}

In this section, we compute the length of the waist curve for each cylinder to find the modulus and area of each cylinder. This data is used in Section \ref{sec:nopara} to show that there is no parabolic affine diffeomorphism fixing this cylinder decomposition. 

First, we will find the horizontal displacement of each waist curve. The table below lists the side lengths of each square and the number of times a waist curve of $\mathrm{cyl}_k$ goes through each square. This follows from the circle rotation defined in the previous section. 

$$\begin{tabular}{c|c|c|c|c|c|c|c|}
     & $\square_1$ & $\square_2$ & $\cdots$ & $\square_i$ & $\cdots$ & $\square_k$ & $\square_{k+1}$ \\ \hline
\text{side length} & 1 & $1/q$ & & $1/q^{i-1}$ & & $1/q^{k-1}$ & $1/q^k$ \\     \hline
\# & $k(2q-2)-1$ & $(k-1)(q-1)$ & & $(k-i+1)(q-1)$ & & $q-1$ & $1$     \end{tabular}$$

Then the horizontal displacement of a waist curve of $\mathrm{cyl}_k$ is $$\begin{array}{rl} & k(2q-2)-1 + \dfrac{1}{q} (k-1)(q-1) + \dfrac{1}{q^2} (k-2)(q-1) + \cdots + \dfrac{1}{q^{k-1}}(q-1) + \dfrac{1}{q^k}\\
= & k(2q-2) - 1 + \sum\limits_{i=1}^{k-1} \dfrac{(k-i)(q-1)}{q^i} + \dfrac{1}{q^k} \\
= & k(2q-2) - 1 + 
\dfrac{q-1}{q^k} \sum\limits_{i=1}^{k-1} (k-i) q^{k-i} + \dfrac{1}{q^k}\\
= & k(2q-2) - 1 + \dfrac{(k-1) q^{k+1}-kq^k+q}{q^k (q-1)} + \dfrac{1}{q^k}.\end{array}$$ For the last equality, we refer to the remark below.

\begin{remark} The previous computation follows since: $$\begin{array}{rl} \sum\limits_{i=1}^{k-i} (k-i) q^{k-i} & = q + 2q^2 + 3q^3 + \cdots + (k-1)q^{k-1}\\
& = q + 2q^2 + 3q^3 + \cdots + (k-1)q^{k-1} + \left(q + \cdots + q^{k-1}\right) - \left(q + \cdots + q^{k-1}\right)\\
& = 2q + 3q^2 + \cdots + k q^{k-1} - \dfrac{q(q^{k-1}-1)}{q-1}\\
& = \left(q^2 + \cdots + q^k\right)' - \dfrac{q^k-q}{q-1}\\
& = \left(\dfrac{q^2 (q^{k-1}-1)}{q-1}\right)' - \dfrac{q^k-q}{q-1}\\
& = \dfrac{(k-1) q^{k+1}- k q^k + q}{(q-1)^2}.\end{array}$$\end{remark}

\begin{prop}\label{prop:length of cyl} The horizontal displacement of the waist curve of $\mathrm{cyl}_k$ is $$(2q-1)\left(k - \dfrac{q^k - 1}{q^k (q-1)}\right)$$ and the actual length of the waist curve, i.e., the circumference of $\mathrm{cyl}_k$ is $$\left( k - \dfrac{q^k - 1}{q^k (q-1)}\right) \sqrt{(2q-1)^2 + q^2}.$$ Furthermore, the modulus of $\mathrm{cyl}_k$ is given as $$\dfrac{\text{circumference}}{\text{width}} = \dfrac{q^2 + (2q-1)^2}{(q-1)^2} \left(k q^{k+1} -(k+1) q^k + 1\right),$$ and the area of $\mathrm{cyl}_k$ is given by $$\text{area}\left(\mathrm{cyl}_k\right) = \left(k - \dfrac{q^k - 1}{q^k (q-1)}\right) \dfrac{q-1}{q^k}.$$\end{prop}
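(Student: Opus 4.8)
The plan is to assemble the four claimed formulas from the single combinatorial input recorded in the table, namely the side lengths $1/q^{i-1}$ of $\square_i$ together with the counts of how many times a waist curve of $\mathrm{cyl}_k$ traverses $\square_i$. First I would establish the horizontal displacement: starting from the raw sum $k(2q-2)-1 + \sum_{i=1}^{k-1}\frac{(k-i)(q-1)}{q^i} + \frac{1}{q^k}$ displayed just before the statement, apply the closed form from the Remark (the evaluation of $\sum_{i=1}^{k-1}(k-i)q^{k-i}$ as $\frac{(k-1)q^{k+1}-kq^k+q}{(q-1)^2}$) and simplify. The bookkeeping is purely algebraic: collect the three terms over the common denominator $q^k(q-1)$, and verify that the numerator rearranges to $(2q-1)\bigl(kq^k(q-1)-(q^k-1)\bigr)$, which is exactly $(2q-1)q^k(q-1)\bigl(k-\frac{q^k-1}{q^k(q-1)}\bigr)$ divided by $q^k(q-1)$, i.e. $(2q-1)\bigl(k-\frac{q^k-1}{q^k(q-1)}\bigr)$.

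Next I would pass to the actual circumference. Since every segment of the waist curve lies in the $\frac{1}{2-r}$-direction, i.e. has slope $\frac{q}{2q-1}$ relative to the polygonal representation, the true length is the horizontal displacement scaled by $\frac{\sqrt{(2q-1)^2+q^2}}{2q-1}$ (the hypotenuse-to-horizontal ratio of a slope-$\frac{q}{2q-1}$ segment). Multiplying the horizontal displacement formula by this factor cancels the $2q-1$ and yields $\bigl(k-\frac{q^k-1}{q^k(q-1)}\bigr)\sqrt{(2q-1)^2+q^2}$, the second claim. For the modulus I would invoke Lemma \ref{lem:skewwidth}, which gives the width of $\mathrm{cyl}_k$ as $\frac{q-1}{q^k\sqrt{q^2+(2q-1)^2}}$; then
\[
\text{modulus} = \frac{\text{circumference}}{\text{width}} = \Bigl(k-\tfrac{q^k-1}{q^k(q-1)}\Bigr)\sqrt{q^2+(2q-1)^2}\cdot\frac{q^k\sqrt{q^2+(2q-1)^2}}{q-1},
\]
and the two square roots combine to $q^2+(2q-1)^2$, while $\bigl(k-\frac{q^k-1}{q^k(q-1)}\bigr)\cdot\frac{q^k}{q-1}$ simplifies to $\frac{kq^{k+1}-(k+1)q^k+1}{(q-1)^2}$, giving the stated modulus. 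Finally, the area of a cylinder is circumference times width, so $\text{area}(\mathrm{cyl}_k) = \bigl(k-\frac{q^k-1}{q^k(q-1)}\bigr)\sqrt{q^2+(2q-1)^2}\cdot\frac{q-1}{q^k\sqrt{q^2+(2q-1)^2}} = \bigl(k-\frac{q^k-1}{q^k(q-1)}\bigr)\frac{q-1}{q^k}$, the last claim.

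The only genuine content beyond routine algebra is justifying the traversal counts in the table — that a waist curve of $\mathrm{cyl}_k$ passes through $\square_i$ exactly $(k-i+1)(q-1)$ times for $2\le i\le k$, through $\square_1$ exactly $k(2q-2)-1$ times, and through $\square_{k+1}$ once. I would derive these from the circle-rotation description of the waist curve established in Section \ref{sec:cyl} (Theorem \ref{thm:bsck} together with the $T$-orbit analysis): each visit to the portal of $\square_i$ corresponds to a specific block of iterates of $T$, and the recursive structure $\mathrm{bsc}_k = \mathrm{bsc}_{k-1} + \frac{q-1}{q^{k-1}(2q-1)}\oplus\mathrm{bsc}_k'$ shows that $\mathrm{cyl}_k$ inherits the traversals of $\mathrm{cyl}_{k-1}$ (explaining the linear-in-$k$ growth) plus the $(q-1)$ new visits to each of $\square_1,\dots,\square_{k}$ contributed by $\mathrm{bsc}_k'$ as constructed in Theorem \ref{thm:bsck'}, plus the single traversal of $\square_{k+1}$. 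This step — reading off the exact counts from the orbit structure — is where care is needed; everything downstream is mechanical simplification. I expect the main obstacle to be nothing deep, but rather making the verification of these counts airtight, since the displayed arguments in the preceding sections track the trajectory somewhat informally; once the table is certified, the four formulas follow by the manipulations above.
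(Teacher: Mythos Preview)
Your proposal is correct and follows essentially the same route as the paper: the paper records the traversal-count table (justified by the circle rotation from Section~\ref{sec:cyl}), computes the raw horizontal displacement sum and simplifies it via the Remark, and then states the proposition without further proof, leaving the circumference, modulus, and area as immediate consequences of the slope $\tfrac{q}{2q-1}$ and the width formula of Lemma~\ref{lem:skewwidth}. Your write-up makes these implicit steps explicit and correctly identifies that the only substantive input is the table itself.
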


\begin{prop} Given a geometric armadillo tail with parameter $r = \frac{1}{q},$ $q \in \N \setminus \{1\},$ and a cylinder decomposition in the $\frac{1}{2-r}$-direction, the sum of skew-widths in $\square_1$ is 1.\end{prop}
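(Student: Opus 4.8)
The plan is to show that the skew-widths of the cylinders $\mathrm{cyl}_k$, as they appear stacked in $\square_1$, partition the unit interval $[0,1]$ on the left edge of $\square_1$. From Lemma~\ref{lem:skewwidth}, the skew-width of $\mathrm{cyl}_k$ is $\frac{q-1}{q^k(2q-1)}$, and from Theorem~\ref{thm:bsck} (together with the convergence argument in Theorem~\ref{thm:main}), each cylinder $\mathrm{cyl}_k$ intersects the section $\{0\}\times[0,1]$ a specific number of times, contributing that many disjoint vertical sub-segments of total length equal to (number of intersections) $\times$ (skew-width). So the first step is to extract the intersection counts: $\mathrm{bsc}_1$ meets the $y$-axis $2q-3$ times (Theorem~\ref{thm:cyl1}), and for $k>1$, $\mathrm{bsc}_k$ meets it $(k(2q-2)-1) - ((k-1)(2q-2)-1) = 2q-2$ additional times coming from $\mathrm{bsc}_k'$ (or one reads this off the first row of the table in Section~\ref{sec:area-of-cyls}, where $\mathrm{cyl}_k$ wraps $\square_1$ exactly $k(2q-2)-1$ times, so the $k$th cylinder's ``new'' contribution is $2q-2$ copies of its skew-width plus the effect of the vertical shifts).

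The cleaner route, which I would actually take, is to bypass the bookkeeping of how many segments each fixed cylinder contributes and instead directly telescope the convergence identities already established in the proof of Theorem~\ref{thm:main}. There it is shown that for each intersection point of $\mathrm{bsc}_{k}$ with the $y$-axis at height $\frac{i}{q^{k-1}(2q-1)}$, adding the sum of all subsequent skew-widths $\sum_{j\ge k}\frac{q-1}{q^{j}(2q-1)} = \frac{1}{q^{k-1}(2q-1)}$ lands exactly at the next bsc-intersection (either $\frac{i+1}{q^{k-1}(2q-1)}$ on the same cylinder, or a point on $\mathrm{cyl}_{k-1}$, or the spine). Thus every gap between consecutive bsc-heights along the $y$-axis is precisely filled by the nested stack of cylinders sitting above it. So the second step is: the union of all the sub-segments of the left edge covered by $\bigcup_k \mathrm{cyl}_k$, together with the single point of the rigid spine, is all of $[0,1]$ up to the measure-zero set of bsc-endpoints, and these sub-segments are disjoint (distinct cylinders meet only along saddle connections, by the definition of a cylinder decomposition — this is Theorem~\ref{thm:main}). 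Hence the total length is the Lebesgue measure of $[0,1]$, namely $1$.

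Concretely, I would write: $\sum_{k\ge1} (\#\{\mathrm{cyl}_k \cap \{0\}\times[0,1]\}) \cdot \mathrm{skw}(\mathrm{cyl}_k)$, where $\#\{\mathrm{cyl}_1\cap\cdots\} = 2q-3$ with skew-width $\frac{q-1}{q(2q-1)}$, and for $k\ge2$ the ``newly appearing'' segments of $\mathrm{cyl}_k$ in $\square_1$ number $2q-2$ each of skew-width $\frac{q-1}{q^k(2q-1)}$ — but one must be careful, since $\mathrm{bsc}_k$ as a curve records the cumulative shifts, so a single fixed cylinder $\mathrm{cyl}_k$ actually appears in $\square_1$ a number of times growing with how many later cylinders stack above it. The safe accounting is: the total measure covered by all cylinders is $\sum_{k} (\text{number of maximal sub-intervals of } \square_1\text{'s edge inside }\mathrm{cyl}_k)\cdot\frac{q-1}{q^k(2q-1)}$, and rather than compute this series, invoke that these sub-intervals are pairwise disjoint and their closure is all of $[0,1]$ (the spine contributing only finitely many points, the bsc-endpoints a countable set). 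Therefore the sum of lengths is $1$. The main obstacle is making the disjointness-and-covering claim rigorous without circularity: it must be justified that no point of $[0,1]$ is left uncovered, which is exactly the content of the convergence statements in Theorem~\ref{thm:main} (each gap $\left(\frac{i}{q^{k-1}(2q-1)}, \frac{i+1}{q^{k-1}(2q-1)}\right)$ is exhausted by the descending chain of skew-widths), so the proof reduces to citing that theorem and observing $\sum_{j\ge k}\frac{q-1}{q^j(2q-1)} = \frac{1}{q^{k-1}(2q-1)}$ is precisely the gap length — a one-line geometric series. I expect the write-up to be short: state the telescoping identity, note disjointness from the cylinder-decomposition property, and conclude the skew-widths sum to $\mathrm{Leb}([0,1])=1$.
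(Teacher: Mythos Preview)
Your final approach---arguing that the cylinder intersections with the left edge of $\square_1$ are pairwise disjoint open subintervals whose closure is all of $[0,1]$ (by Theorem~\ref{thm:main}), hence their total length is $1$---is correct and not circular, since Theorem~\ref{thm:main} is proved independently via the telescoping identity $\sum_{j\ge k}\frac{q-1}{q^j(2q-1)}=\frac{1}{q^{k-1}(2q-1)}$.

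This is genuinely different from the paper's proof, which is a direct series computation: the paper reads off from the table in Section~\ref{sec:area-of-cyls} that $\mathrm{cyl}_k$ wraps around $\square_1$ exactly $2kq-(2k+1)$ times, multiplies by the skew-width $\frac{q-1}{q^k(2q-1)}$, and sums
\[
\sum_{k=1}^\infty \bigl(2kq-(2k+1)\bigr)\cdot\frac{q-1}{q^k(2q-1)}=\frac{q-1}{2q-1}\left(\frac{2q^2}{(q-1)^2}-\frac{2q}{(q-1)^2}-\frac{1}{q-1}\right)=1.
\]
The paper's route is computationally self-contained and gives the explicit intersection count along the way; your route is shorter and more conceptual, but leans entirely on the covering statement of Theorem~\ref{thm:main}. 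Note that your intermediate bookkeeping is muddled---the number of times $\mathrm{cyl}_k$ itself meets the $y$-axis is the fixed number $k(2q-2)-1$, not something that ``grows with how many later cylinders stack above it''---so it is good that you abandoned that line for the covering argument.
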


\begin{proof}
The number of times $\mathrm{cyl}_k$ intersects the $y$-axis is $2kq -(2k+1).$ Hence the sum of all skew-widths is:
\begin{align*}
\sum_{k=1}^\infty \left(2kq-(2k+1)\right) \cdot \frac{q-1}{q^k (2q-1)} & = \frac{q-1}{2q-1} \sum_{k=1}^\infty \left(\frac{2k}{q^{k-1}} - \frac{2k}{q^k} - \frac{1}{q^k}\right)\\
& = \frac{q-1}{2q-1} \left(\frac{2q^2}{(q-1)^2} - \frac{2q}{(q-1)^2} - \frac{1}{q-1}\right) = 1
\end{align*} for all $q \in \N \setminus \{1\}.$ Moreover, as shown in Section~\ref{sec:area-of-cyls}, the area of the cylinders infinitely many cylinders matches the area of the surface. Consequently, we know that the closure of this set of infinitely many cylinders covers the surface, hence this is a cylinder decomposition.

\end{proof}

Next, we verify that given $r=1/q,$ the infinite sum of $\text{area}(\mathrm{cyl}_k)$ is equal to $\frac{1}{1-r^2},$ hence there exists an infinite cylinder decomposition in the $\frac{1}{2-r}$-direction.

\begin{prop} Given a geometric armadillo tail with parameter $r = \frac{1}{q},$ $q \in \N \setminus \{1\},$ and a cylinder decomposition in the $\frac{1}{2-r}$-direction, the sum of the areas of the cylinders is the area of surface.\end{prop}

\begin{proof} From Proposition~\ref{prop:length of cyl}, we write $\text{area}\left(\mathrm{cyl}_k\right) = \frac{k (q-1)}{q^k} - \frac{1}{q^k} + \frac{1}{q^{2k}}.$ Following the same spirit as a previous remark, we use $\sum\limits_{i=1}^\infty i r^i = \frac{r}{(1-r)^2},$ for $|r| < 1.$ The sum of the first terms is $$\sum\limits_{k=1}^\infty \dfrac{k(q-1)}{q^k} = \dfrac{q}{q-1}.$$ The second and third terms are geometric sequences, hence we have $$\sum\limits_{k=1}^\infty \text{area}\left(\mathrm{cyl}_k\right) = \dfrac{q}{q-1} + \dfrac{1}{q-1} + \dfrac{1}{q^2 - 1} = \dfrac{q^2}{q^2-1},$$ our desired result.\end{proof}

\section{No parabolic element preserves the cylinder decomposition}\label{sec:nopara}

Consider the horizontal cylinder decomposition of the armadillo tail seen in Figure \ref{fig:horizontal}. The \emph{vertical cylinder decomposition} is comprised of exactly the squares. The element $\begin{bmatrix} 1 & 0 \\ 1 & 1\end{bmatrix}$ is in the Veech group of the surface and this parabolic element corresponds to the vertical cylinder decomposition: indeed, the affine map associated with this Veech group element twists these cylinders, but preserves them as a set. 

This phenomenon is understood in the finite translation surface setting, where the existence of a cylinder decomposition with rationally related moduli implies a parabolic element in the Veech group and vice-versa. Here, we see that in the vertical cylinder decomposition, the modulus of each cylinder is 1 since each cylinder is a square. However, the moduli of the cylinders in the horizontal cylinder decomposition in Figure~\ref{fig:horizontal} goes to infinity, and there is no parabolic element corresponding to that direction. 

\begin{figure}[h]
    \centering    
    \includegraphics[width=0.9\linewidth]{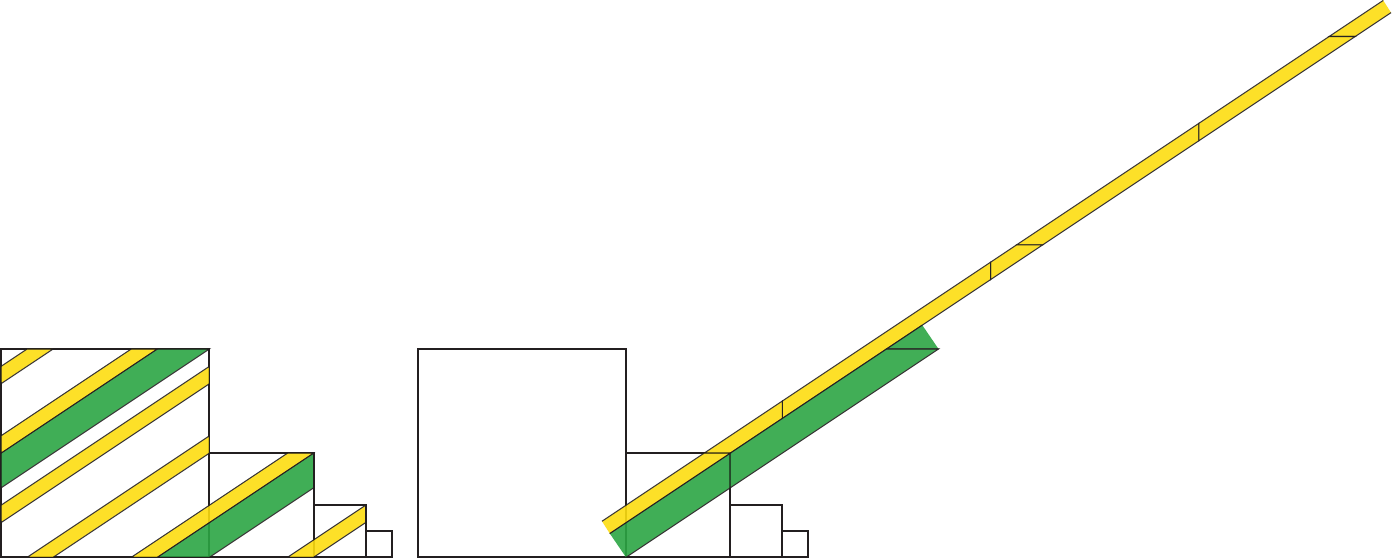}
    \caption{Cylinder decomposition $\mathcal{C}$ in the $\frac{1}{2-r}$ direction}
    \label{fig:cylperp}
\end{figure}

\begin{lem}\label{lem:inf-mod-implies-no-para} Let $\mathcal{C}$ be a cylinder decomposition on a finite-area infinite-type translation surface. Label the cylinders $i$ and let $\{m_i\}_{i \in I}$ denote the sequence of moduli. If there is a subsequence of $\{m_i\}_{i \in I}$ tending to infinity, then there is no parabolic element in the Veech group corresponding to an affine map that preserves the cylinder decomposition. 
\end{lem}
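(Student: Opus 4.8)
The plan is to argue by contradiction, using the correspondence between parabolic affine automorphisms and multitwists recalled in Remark~\ref{rem:1011}. Suppose $\phi \in \mathrm{Aff}(X)$ has parabolic derivative and preserves $\mathcal{C}$. First I would pin down the direction: since $\phi$ sends cylinders of $\mathcal{C}$ to cylinders of $\mathcal{C}$, it sends their waist curves to waist curves, so $D\phi$ fixes the common direction $\theta$ of $\mathcal{C}$; a nontrivial parabolic has a unique invariant line, so that line is the $\theta$-direction. Work in coordinates where $\theta$ is horizontal, and (replacing $\phi$ by $\phi^{2}$ if needed) assume $D\phi = \begin{pmatrix} 1 & t \\ 0 & 1\end{pmatrix}$ with $t \neq 0$. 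Since $D\phi$ has eigenvalue $1$ along $\theta$ it preserves circumferences, and since $\det D\phi = 1$ it preserves areas, hence widths; so $\phi$ permutes the cylinders of $\mathcal{C}$ preserving the modulus $m_i = (\text{circumference})/(\text{width})$.

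Next I would invoke the multitwist structure cylinder by cylinder. Writing $C_i \cong (\R/\ell_i\Z)\times[0,h_i]$ with $m_i = \ell_i/h_i$, the restriction of $\phi$ to $C_i$ is affine with linear part $\begin{pmatrix} 1 & t \\ 0 & 1\end{pmatrix}$, maps the two boundary circles to boundary circles, and must carry the singular points on $\partial C_i$ to singular points. Comparing with the right Dehn twist $\tau_i$ of $C_i$, whose derivative in these coordinates is $\begin{pmatrix} 1 & m_i \\ 0 & 1\end{pmatrix}$ (the shear moving the top circle once around relative to the bottom), and using that $\phi$ is globally consistent along the $\theta$-directed saddle connections shared by adjacent cylinders, one obtains that $\phi$ restricted to $C_i$ equals $\tau_i^{\,n_i}$ up to a rotation along the core, for some integer $n_i$ determined by $C_i$. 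Equating derivatives with the global value $t$ gives $t = n_i m_i$ for every cylinder $C_i$ of $\mathcal{C}$; as $t\neq 0$ forces $n_i \neq 0$, we conclude $m_i = |t|/|n_i| \leq |t|$. Thus the moduli of $\mathcal{C}$ are uniformly bounded by $|t|$, which contradicts the existence of a subsequence of $\{m_i\}$ tending to infinity, and no such $\phi$ exists.

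I expect the only real obstacle to be the second paragraph: justifying the multitwist rigidity in the infinite-type setting, where $\mathcal{C}$ can have infinitely many cylinders and $\phi$ can permute infinitely many of them sharing the same modulus. The key point to nail down is that the integer $n_i$ is intrinsic to the single cylinder $C_i$ --- it records the (necessarily integral) relative twist of $\phi$ between the bottom and top boundary circles of $C_i$, forced by the compatibility of $\phi$ along the saddle connections bounding $C_i$ --- so the bound $m_i \le |t|$ holds for every cylinder simultaneously and does not rely on passing to an $i$-dependent power of $\phi$. For this I would adapt the standard finite-type argument for parabolic elements stabilizing a cylinder decomposition (see Appendix D of~\cite{Hooper-invariant-measures} and the references therein), checking that it is local to each cylinder and hence insensitive to the number of cylinders.
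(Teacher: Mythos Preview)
Your argument is correct and follows the same route as the paper: assume a parabolic $D\phi=\begin{pmatrix}1&t\\0&1\end{pmatrix}$ exists and derive a contradiction from the fact that a cylinder of modulus $m_i>|t|$ cannot be stabilized. The paper's proof is a terse four-sentence version of this same idea (and in fact writes ``circumference'' where ``modulus'' is the quantity that actually yields the bound); your treatment of the multitwist constraint $t=n_i m_i$ and the permutation subtlety is more careful than what the paper records.
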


\begin{remark} The lemma allows for rationally related moduli, hence distinguishes the finite translation surface setting from the infinite-type translation surface setting. 
\end{remark}

\begin{proof} Assume otherwise. Then $D\phi$ is a parabolic element in $SL_2(\R)$, where the eigendirection corresponds to the direction of the cylinder decomposition. Up to conjugation, $D\phi$ is of the form $\begin{bmatrix} 1 & p \\ 0 & 1 \end{bmatrix}$ for some $p$. In order to stabilize $\mathrm{cyl}_i$, $p$ must be an integer multiple, $n_i$, of the modulus $m_i.$ If the moduli are not rationally related, then we are done because no such $p$ will exist for all $i$. Else, since $m_i$ tends to infinity, in order for $p$ to exist, we need $n_i$ to tend to $0.$ However, the $n_i$'s are a sequence of non-zero integers, so the only $p$ that can exist is 0. This leads to a contradiction. 
\end{proof}

\begin{thm}\label{thm:no-para} Let $\mathcal{C}$ be a cylinder decomposition with a rigid spine on a finite-area infinite-type translation surface. Then, there is no parabolic affine diffeomorphism stabilizing the cylinders in that direction.  
\end{thm}

\begin{proof} A spine is closed under the linear flow, hence, since it contains a point which is not a singular point, it must have positive length in the flow direction. In order for the line segment to not be in the cylinder decomposition, there must be infinitely many cylinders. Since the surface is finite area, the cylinder widths be must have a subsequence that goes to zero. However, since the spine has positive length, the cylinders limiting to one side of the spine must be at least the length of the spine. Hence, there is a subsequence of the moduli of the cylinders tending to infinity. The result follows from Lemma~\ref{lem:inf-mod-implies-no-para}.
\end{proof}

\begin{cor}\label{cor:no-para-element} Let $\mathcal{C}$ be the cylinder decomposition constructed in the previous sections of this paper. There is no parabolic element in the Veech group corresponding to this cylinder decomposition. 
\end{cor}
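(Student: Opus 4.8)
The plan is to deduce the corollary directly from Lemma~\ref{lem:inf-mod-implies-no-para} using the explicit modulus computation already in hand. First I would recall from Proposition~\ref{prop:length of cyl} that the modulus of $\mathrm{cyl}_k$ in $\mathcal{C}$ equals $\frac{q^2+(2q-1)^2}{(q-1)^2}\bigl(kq^{k+1}-(k+1)q^k+1\bigr)$, and observe that $kq^{k+1}-(k+1)q^k+1 = q^k\bigl(k(q-1)-1\bigr)+1 \to \infty$ as $k \to \infty$ for every $q \in \mathbb{N}\setminus\{1\}$. Hence the sequence $\{m_k\}$ of moduli of the cylinders of $\mathcal{C}$ itself tends to infinity, so in particular it has a subsequence tending to infinity. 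Lemma~\ref{lem:inf-mod-implies-no-para} then applies verbatim: there is no parabolic element in the Veech group corresponding to an affine map that preserves $\mathcal{C}$.

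The only point that needs a sentence of care is matching the phrase ``a parabolic element in the Veech group corresponding to this cylinder decomposition'' with the hypothesis of Lemma~\ref{lem:inf-mod-implies-no-para}. As in Remark~\ref{rem:1011}, such an element is a parabolic $D\phi \in SL_2(\mathbb{R})$ whose eigendirection is the $\frac{1}{2-r}$-direction, realized by an affine diffeomorphism $\phi$ of the surface; since an affine diffeomorphism sends cylinders to cylinders of the same modulus and the $m_k$ are pairwise distinct (indeed strictly increasing in $k$), $\phi$ must fix each $\mathrm{cyl}_k$, i.e.\ $\phi$ preserves $\mathcal{C}$ cylinder by cylinder. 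So the object ruled out by Lemma~\ref{lem:inf-mod-implies-no-para} is exactly the object we wish to exclude. Alternatively, and more conceptually, one may invoke the lemma immediately preceding this corollary: by Theorem~\ref{thm:main} the decomposition $\mathcal{C}$ has a rigid spine, and by the construction in Section~\ref{sec:one-dir} (Theorems~\ref{thm:spine} and~\ref{thm:cyl1}) this spine is a single saddle connection of slope $\frac{1}{2-r}$ of positive Euclidean length, hence not comprised of only singular points; that lemma then forbids a parabolic affine diffeomorphism stabilizing the cylinders in that direction.

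There is essentially no analytic obstacle here: all of the work has been front-loaded into the modulus computation of Proposition~\ref{prop:length of cyl} (which supplies the divergence $m_k \to \infty$), into Lemma~\ref{lem:inf-mod-implies-no-para}, and into Theorem~\ref{thm:main} (existence of the rigid spine). The mild ``hard part'', such as it is, is purely a matter of bookkeeping in the infinite-type setting — being explicit that ``corresponds to $\mathcal{C}$'' forces the eigendirection to be the $\frac{1}{2-r}$-direction and that distinctness of the moduli upgrades ``permutes the cylinders'' to ``fixes every cylinder'' — after which the corollary is immediate.
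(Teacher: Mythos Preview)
Your proposal is correct. The paper leaves the corollary unproved, as an immediate consequence of the lemma just before it (the spine lemma), which is exactly your ``alternatively'' route; your primary route via Proposition~\ref{prop:length of cyl} and Lemma~\ref{lem:inf-mod-implies-no-para} is equally valid and indeed is what the spine lemma's own proof unwinds to.
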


Theorem~\ref{thm:main} and Corollary~\ref{cor:no-para-element} together prove the main theorem, Theorem~\ref{thm:one}.

\begin{remark}
In work of Hooper and Trevi\~no~\cite{HooperTrevino}, they observe that the golden ladder has a cylinder decomposition whose moduli are all equal, and the corresponding orthogonal cylinders are symmetric. They are able to find two parabolics, one in each direction. The construction of these parabolics was described by Thurston in the finite genus case. For the infinite genus case, see~\cite{HooperTrevino} or the Hooper--Thurston--Veech construction in~\cite{DeLaCroix-Hubert-Valdez}.
\end{remark}

\begin{remark}
One might ask if there is a cylinder decomposition whose direction is orthogonal to the one constructed, analogous to the cylinder decomposition observed in Figure~\ref{fig:vertical}. We note that if we attempt to construct an orthogonal decomposition following the same procedure, it fails. Indeed, the rigid spine runs along the top of what would be $\text{cyl}_1^{\perp}$. The length of the rigid spine is $\frac{\sqrt{q^2+(2q-1)^2}}{q-1}.$ If $q=2$, the spine is longer than the bottom saddle connection of the first cylinder, $\mathrm{bsc}_1$. Hence, there is no $\mathrm{cyl}_1^{\perp}$. If $q>2$, the spine is shorter than $\mathrm{bsc}_1.$ If we pick a point in $\mathrm{cyl}_1$ and move perpendicular to the cylinder direction, we will pass through every cylinder and hit either the rigid spine or $\mathrm{bsc}_{1}$ (see the proof of Theorem~\ref{thm:main}). The concatenation of the rigid spine and $\mathrm{bsc}_1$ is clearly longer than $\mathrm{bsc}_1$. Consequently, there can be no $\mathrm{cyl}_1^{\perp}$. This, however, does not show that no cylinder decomposition exists in the orthogonal direction, just that this particular method of construction fails. 
\end{remark}

%References 

\bibliographystyle{abbrv}
% \bib, bibdiv, biblist are defined by the amsrefs package.
\begin{bibdiv}
\begin{biblist}

\bib{bowman-arnouxyoccoz}{article}{
      author={Bowman, Joshua~P.},
       title={The complete family of {A}rnoux--{Y}occoz surfaces},
        date={2012},
     journal={Geometriae Dedicata},
      volume={164},
       pages={113\ndash 130},
         url={https://api.semanticscholar.org/CorpusID:254500030},
}

\bib{bowman}{book}{
      author={Bowman, Joshua~P.},
      editor={Jiang, Yunping},
      editor={Mitra, Sudeb},
       title={Finiteness conditions on translation surfaces},
   publisher={American Mathematical Society},
        date={2012},
        ISBN={978-0-8218-9029-5},
}

\bib{ddl1}{article}{
      author={Degli~Esposti, Mirko},
      author={Del~Magno, Gianluigi},
      author={Lenci, Marco},
       title={An infinite step billiard},
        date={1998},
        ISSN={0951-7715,1361-6544},
     journal={Nonlinearity},
      volume={11},
      number={4},
       pages={991\ndash 1013},
         url={https://doi.org/10.1088/0951-7715/11/4/013},
      review={\MR{1632594}},
}

\bib{ddl2}{article}{
      author={Degli~Esposti, Mirko},
      author={Del~Magno, Gianluigi},
      author={Lenci, Marco},
       title={Escape orbits and ergodicity in infinite step billiards},
        date={199907},
     journal={Nonlinearity},
      volume={13},
}

\bib{DeLaCroix-Hubert-Valdez}{inproceedings}{
      author={Delecroix, Vincent},
      author={Hubert, Pascal},
      author={Valdez, Ferr{\'a}n},
       title={Infinite translation surfaces in the wild},
        date={2024},
         url={https://api.semanticscholar.org/CorpusID:268297082},
}

\bib{Hooper-invariant-measures}{article}{
      author={Hooper, W.~Patrick},
       title={{The invariant measures of some infinite interval exchange
  maps}},
        date={2015},
     journal={Geometry \& Topology},
      volume={19},
      number={4},
       pages={1895\ndash 2038},
         url={https://doi.org/10.2140/gt.2015.19.1895},
}

\bib{hooper-hubert-weiss}{article}{
      author={Hooper, W.~Patrick},
      author={Hubert, Pascal},
      author={Weiss, Barak},
       title={Dynamics on the infinite staircase},
        date={2013},
        ISSN={1078-0947},
     journal={Discrete and Continuous Dynamical Systems},
      volume={33},
      number={9},
       pages={4341\ndash 4347},
  url={https://www.aimsciences.org/article/id/fcbb257b-52c8-44ae-aef8-f45b2cfb3fe6},
}

\bib{HooperTrevino}{article}{
      author={Hooper, W.~Patrick},
      author={Trevi{\~n}o, Rodrigo},
       title={Indiscriminate covers of infinite translation surfaces are
  innocent, not devious},
        date={2015},
     journal={Ergodic Theory and Dynamical Systems},
      volume={39},
       pages={2071\ndash 2127},
         url={https://api.semanticscholar.org/CorpusID:54495262},
}

\bib{HS-intro}{article}{
      author={Hubert, Pascal},
      author={Schmidt, Thomas},
       title={An introduction to {V}eech surfaces},
        date={200601},
     journal={Handbook of Dynamical Systems, vol. 1B},
}

\bib{LS-spine}{misc}{
      author={Lee, Dami},
      author={Southerland, Josh},
       title={Spines on infinite-type translation surfaces (in preparation)},
         url={https://arxiv.org},
}

\bib{MT-02}{incollection}{
      author={Masur, Howard},
      author={Tabachnikov, Serge},
       title={Rational billiards and flat structures},
        date={2002},
   booktitle={Handbook of dynamical systems, {V}ol.\ 1{A}},
   publisher={North-Holland, Amsterdam},
       pages={1015\ndash 1089},
         url={https://doi.org/10.1016/S1874-575X(02)80015-7},
      review={\MR{1928530}},
}

\bib{randecker2016}{book}{
      author={Randecker, Anja},
       title={Geometry and topology of wild translation surfaces},
   publisher={KIT Scientific Publishing},
        date={2016},
        ISBN={978-3-7315-0456-6},
}

\bib{Trevino}{article}{
      author={Treviño, Rodrigo},
       title={On the ergodicity of flat surfaces of finite area},
        date={2014},
     journal={Geometric and Functional Analysis},
      volume={24},
       pages={360–386},
}

\bib{Veech-89}{article}{
      author={Veech, W.~A.},
       title={Teichm\"uller curves in moduli space, {E}isenstein series and an
  application to triangular billiards},
        date={1989},
        ISSN={0020-9910,1432-1297},
     journal={Invent. Math.},
      volume={97},
      number={3},
       pages={553\ndash 583},
         url={https://doi.org/10.1007/BF01388890},
      review={\MR{1005006}},
}

\bib{Viana}{article}{
      author={Viana, Marcelo},
       title={Ergodic theory of interval exchange maps.},
    language={eng},
        date={2006},
     journal={Revista Matemática Complutense},
      volume={19},
      number={1},
       pages={7\ndash 100},
         url={http://eudml.org/doc/40876},
}

\end{biblist}
\end{bibdiv}

\end{document}